\documentclass[11pt]{article}
\usepackage{graphicx} 

\title{Singular Backward SDEs for Optimal Control with State Constraints}
\author{Mathieu Lise\thanks{New York University, Tandon School of Engineering. This work is partially supported by NSF grant $\#$DMS-2508581.}~\footnote{Email: mathieu.lise@nyu.edu} \and Nizar Touzi\footnotemark[1]~\footnote{Email: nizar.touzi@nyu.edu}}
\date{}

\usepackage[backend=biber, maxnames=99]{biblatex}

\AtBeginBibliography{\fontsize{10}{13}\selectfont}

\usepackage{geometry}
\usepackage{hyperref}
\hypersetup{
    colorlinks=true,
    linkcolor=blue,
    citecolor=magenta,
    filecolor=magenta,      
    urlcolor=cyan,
    pdftitle={Overleaf Example},
    pdfpagemode=FullScreen,
    }

\newcommand{\eqrange}[2]{(\ref{#1}--\ref{#2})}

\usepackage{setspace}
\usepackage{amsmath}
\usepackage{amsfonts}
\usepackage{mathtools}
\usepackage{mathrsfs}
\usepackage{amssymb}
\usepackage{tabularx}
\usepackage{algorithm}
\usepackage{array}
\usepackage{graphicx}
\usepackage{multirow}
\usepackage{amsthm}
\usepackage{algorithmic}
\usepackage{xcolor}
\usepackage{bm}
\usepackage{float}
\usepackage[overload]{empheq}
\usepackage{hyperref}
\usepackage{tabularx}
\usepackage{bbm}
\usepackage{enumitem}

\newtheorem{theorem}{Theorem}
\newtheorem{lemma}[theorem]{Lemma}
\newtheorem{proposition}[theorem]{Proposition}
\newtheorem{definition}[theorem]{Definition}

\theoremstyle{definition}
\newtheorem{rmk}[theorem]{Remark}
\newtheorem{example}[theorem]{Example}
\newtheorem{assumption}{Assumption}
\newtheorem{corollary}[theorem]{Corollary}

\numberwithin{equation}{section}
\numberwithin{theorem}{section}

\newcounter{assumptiontag}
\renewcommand{\theassumptiontag}{A\arabic{assumption}.\roman{assumptiontag}}

\begin{document}
\maketitle

\setlength{\parindent}{0pt}
\begin{spacing}{1.15}

\begin{abstract}
    We investigate a class of backward stochastic differential equations (BSDEs) with at most quadratic growth which explode at a possibly unbounded random horizon, defined through the first hitting of zero of an adapted Itô process. In contrast with the classical theory of singular BSDEs, the explosion is generated by the nonlinear dependence of the generator on the martingale integrand, rather than by a superlinear coercivity condition in the solution component. We construct a minimal singular solution and derive two-sided estimates on its explosion, together with weighted BMO estimates for the martingale integrand. We also obtain the uniqueness and exact explosion rates under additional structural assumptions. For Hamiltonian generators, we establish a verification theorem for an infinite-horizon stochastic optimal control problem with possible non-Markovian state constraints, showing that the BSDE feedback induces the unique optimal constrained law. We finally specialize the theory to exit times of uniformly elliptic Markov diffusions and recover the connection with large solutions of viscous Hamilton-Jacobi equations.
\end{abstract}


\vspace{15pt}

\textbf{Key words:} Backward SDEs, Infinite Horizon Optimal Control, Non-Markov State Constraints.

\vspace{5pt}

\section{Introduction}

\setlength{\parindent}{20pt}

State constraints in stochastic control require the controlled state to remain in a prescribed admissible region for all times. For non-degenerate diffusions, such constraints are intrinsically singular: in general they cannot be enforced by bounded feedback controls. In a Markovian framework, this singular behavior appears naturally in the associated Hamilton–Jacobi–Bellman equation through an infinite boundary condition. A canonical example was studied by \citeauthor{Lasry_Lions_state_constraints} \cite{Lasry_Lions_state_constraints}. For $1 < p \leq 2 $, they considered viscous Hamilton-Jacobi equations of the form:
\begin{equation}\label{HJB_singular}
\begin{array}{cc}
    \displaystyle  -\frac{1}{2} \Delta u + \frac{1}{p} \big|\nabla u \big|^p + \lambda u = h(x)  & \text{in } \;  D \, ,
\end{array}
\end{equation}
in a domain $D$, together with the boundary blow-up condition:
\begin{equation*}\label{eq:singular_boundary_condition}
    \begin{array}{cc}
            \displaystyle u(x) \longrightarrow +\infty \,, & \text{as } \, x \to \partial D   \, .
        \end{array}
\end{equation*}
For a suitable class of data, and in particular for bounded $h$, they established existence, uniqueness, precise boundary estimates, and a stochastic control interpretation of the so-called large solution. The singular behavior of the solution near the boundary produces an unbounded optimal feedback which prevents the optimally controlled diffusion from leaving the domain.

The purpose of this paper is to develop a probabilistic and possibly non-Markovian counterpart of this construction. Rather than describing the constraint only through the position of a Markov diffusion relative to a fixed domain, we introduce an adapted Itô process and a stopping time:
\begin{equation*}
    \begin{array}{ccc}
         \displaystyle  \delta_t = \delta_0 + \int_0^t \Xi_s ds + \int_0^t U_s \cdot dW_s  & \text{and } & \displaystyle  {\rm{T}}^{\delta} \coloneqq \inf \big\{ t \geq 0, \; \delta_t \leq 0 \big\} \, .
    \end{array}
\end{equation*}
The process $\delta$ plays the role of a stochastic distance to the constraint. Its coefficients may depend progressively on the past, so the resulting constraint need not be Markovian. Classical exit times from domains are included as a special case.

The natural equation associated with this problem is a backward stochastic differential equation (BSDE for short) posed on $[0,{\rm{T}}^{\delta})$ and satisfying the singular terminal condition:
\begin{equation}\label{BSDE_LL}
            \begin{array}{cccc}
                 \displaystyle d Y_t =  -f_t(Y_t, Z_t) \, dt + Z_t \cdot d W_t \, , &  t < {\rm{T}}^{\delta} \, , & \text{and} &
                 \displaystyle \lim_{t \rightarrow {\rm{T}}^{\delta}} Y_t = +\infty  \; \; \text{ on } \; \{ {\rm{T}}^{\delta} < \infty \} \, ,
            \end{array}
\end{equation}
We consider generators which are monotone in $y$ and have $p$-polynomial growth in $z$, with $1 < p \leq 2$. A representative example is:
\begin{equation}\label{eq:ex_generator}
    f_t(y, z) = -\frac{1}{p}|z|^p + h_t - \lambda y \, ,
\end{equation}
for some progressively measurable process $(h_t)_{t \ge 0}$. In this framework, the explosion results from the nonlinear dependence on $Z$, combined with the approach of the stochastic distance $\delta$ to zero.

This mechanism is different from the one underlying most of the existing literature on singular backward SDEs. In the work initiated by \citeauthor{popier} \cite{popier} and developed in many subsequent papers, see \textit{e.g.} \cite{popier_random_time, popier_random_time3, popier_general_driver}, singular terminal conditions are typically treated under a superlinear monotonicity assumption in $Y$, closely related to the Keller-Osserman condition for elliptic equations with boundary blow-up. Such BSDEs have also been used in stochastic control problems with terminal state constraints, notably in optimal liquidation, see \citeauthor{kruse_control} \cite{kruse_control} and \citeauthor{popier_non_markovian} \cite{popier_non_markovian}. Our setting is complementary: here the singularity is generated by the gradient, or Hamiltonian, part of the equation. The relevant barriers are therefore closer in spirit to those introduced by Lasry and Lions for Equation \eqref{HJB_singular}. At the same time, the possibly quadratic growth in $Z$ requires tools from the theory of quadratic backward SDEs, initiated by \citeauthor{kobylanski} \cite{kobylanski} and \citeauthor{briand_hu} \cite{briand_hu, briand_hu2}, in particular bounded mean oscillations (BMO) estimates and Girsanov transformations.

Our first main result establishes the existence of a minimal singular solution and provides quantitative estimates on its explosion. For $1 < p <2$, the natural blow-up scale is $d^{-r}$, where $r = \frac{2-p}{p-1}$, while if $p=2$ it is in $-\log(d)$. We also derive weighted BMO estimates for the martingale integrand $Z$, which may be viewed as stochastic counterparts of the boundary gradient estimates satisfied by large solutions of \eqref{HJB_singular}. An important ingredient in our analysis is a comparison principle for backward SDEs with quadratic growth on an arbitrary random horizon, in line with the now standard results of \citeauthor{pardoux_darling} \cite{pardoux_darling}, \citeauthor{royer} \cite{royer}, \citeauthor{nizar_2ndBSDE} \cite{nizar_2ndBSDE}, and \citeauthor{QBSDE_random_time} \cite{QBSDE_random_time}. Our proof does not require any specific structure of the terminal time, and carries the singular terminal value $+\infty$. Under additional assumptions, we also identify the exact asymptotic behavior of the process $Y$ when $t$ approaches the terminal time.

When the nonlinearity $f$ is concave in $z$, we obtain the uniqueness of solutions of \eqref{BSDE_LL} and the connection with a stochastic optimal control problem with state constraints. In the context of Example \eqref{eq:ex_generator}, we introduce the discounted criterion:
\begin{equation}\label{lasry_lions_cost_function}
    J(x, \alpha) = \mathbb{E}^{\mathbb{P}^{\alpha}} \hspace{-3pt} \int_0^{\infty} e^{-\lambda t} \Big( \frac{1}{p'}|\alpha_t|^{p'} + h_t \Big) \,  dt \, ,
\end{equation}
where $p' = \frac{p}{p-1}$ is the conjugate of $p$ and $\mathbb{P}^{\alpha}$ is the probability measure on the path space under which the canonical process $X$ has the decomposition:
\begin{equation*}
\begin{array}{cccc}
     dX_t = -\alpha_t \,  dt + dW_t^{\alpha} & \text{and} & X_0 = x & \text{for some } \, \mathbb{P}^{\alpha}-\text{Brownian motion } \, W^{\alpha} \, .
\end{array}
\end{equation*}
Here, the admissible strategies are progressively measurable processes $\alpha$ with suitable integrability, which force the process $\delta$ never to hit 0, \textit{i.e.} $\mathbb{P}^{\alpha}[{\rm{T}}^{\delta} = \infty] = 1$. We identify an optimal feedback control $\hat{\alpha}$ and prove a verification argument. It turns out that the associated constrained probability measure $\mathbb{P}^{\hat{\alpha}}$ is the unique optimal admissible law.

This control formulation is related to finite-horizon constrained problems which, in the quadratic case, admit an entropic interpretation through changes of measure and conditioned diffusions, see \citeauthor{fuhrman_states_constraints} \cite{fuhrman_states_constraints}. Similar variational structures appear for example in Schrödinger bridges. The present framework differs in several ways: the horizon is infinite and discounted, the Hamiltonian may have sub-quadratic growth, and both the stochastic distance defining the constraint and the running cost may be path-dependent. In particular, outside the quadratic case there is no entropic linearization.

We then specialize the abstract theory to exit times of uniformly elliptic Markov diffusions, taking $\delta$ to be a regularized distance to the boundary of a smooth domain $D$. In particular, when the state process $X$ is a Brownian motion and the generator $f$ is Markovian, the singular backward SDE \eqref{BSDE_LL} can be identified with the large solution of the associated viscous Hamilton-Jacobi equation through:
\begin{equation*}
    \begin{array}{ccc}
         Y_t = u(X_t) & \text{and} & Z_t = \nabla u(X_t)  \, ,
    \end{array}
\end{equation*}
showing the consistency of our results with the framework of \citeauthor{Lasry_Lions_state_constraints} \cite{Lasry_Lions_state_constraints}. 

Our motivation goes beyond the theoretical study of singular backward SDEs. We are also interested in developing non-Markovian counterparts of classical diffusion models with constrained state space. A representative example is the family of positive square-root diffusions, including squared-Bessel and Cox–Ingersoll–Ross type models. In the Markovian framework with $D = \mathbb{R}_+^*$, the Bessel dynamics appear formally in the ergodic limit $\lambda \to 0^+$. Although our present estimates do not allow us to justify this limit in the general setting, the discounted state-constrained models constructed here can be viewed as non-Markovian analogues of these classical positive diffusions. Possible applications include structure preserving generative diffusion models, in the spirit of \citeauthor{tweedie_formula_nizar} \cite{tweedie_formula_nizar}. These models aim at generating samples from a probability distribution through a diffusion process which remains in the state space dictated by the structure of the data. 

The paper is organized as follows. In Section \ref{notations_main_results_sec}, we introduce our framework and notations before stating the main results in Section \ref{sec:main_results}. In Section \ref{existence_sec}, we prove our main existence and uniqueness theorems for the singular backward SDEs \eqref{BSDE_LL}. We establish the connection with stochastic optimal control with state constraints in Section \ref{Control_sec}. Finally, in Section \ref{Applications_sec} we study the application to a Markovian state constrained problem.

\section{General framework} \label{notations_main_results_sec}

In this section, we introduce some notations and the assumptions we will use in the rest of the paper.

\subsection{Notations}

\setlength{\parindent}{10pt}

We first recall some classical definitions from functional analysis. Let $D \subseteq \mathbb{R}^d$ be an open connected domain and denote by $\overline{D} = \text{cl}(D)$ its closure. Let $L^0(D)$, $L^1(D)$ be respectively the sets of Lebesgue-measurable and integrable functions $u : D \to \mathbb{R}$. We define:

\begin{itemize}
    \item $W^{2, k}(D)$ the set of measurable functions $u \in L^0(D)$ such that $|\partial^{\alpha}u|^k \in L^1(D) $ for all $\alpha \in \mathbb{Z}_+^d$ with $|\alpha| \leq 2$. We say that $u \in W^{2, k}_{\text{loc}}(D)$ if $u \in W^{2, k}(K)$ for any compact $K \subset D$.

    \item $\mathscr{C}_d \coloneqq C([0, \infty), \mathbb{R}^d)$ the space of continuous functions from $[0, \infty)$ to $\mathbb{R}^d$. For any ${\rm{x}} \in \mathscr{C}_d$ and $t \geq 0$, we note ${\rm{x}}_{\wedge t } = ({\rm{x}}_{s \wedge t })_{s \geq 0}$.

    \item $d_{\pm}$ the signed distance to the boundary $\partial D$ and $D^{\eta}$ the tubular neighborhood of $\partial D$ of size $2\eta$:
        \begin{equation*}\label{signed_dist}
        \begin{array}{ccc}
             \displaystyle d_{\pm}(x) \coloneqq \begin{cases}
                \begin{array}{cc}
                {\rm{dist}}(x, \partial D) & \text{if } \; x \in D \\
                - {\rm{dist}}(x, \partial D) & \text{if } \; x \notin D \, ,
                \end{array} 
            \end{cases} & \text{and} & \displaystyle D^{\eta} \coloneqq \big\{ x \in \mathbb{R}^d , \; |d_{\pm}(x)| < \eta \big\} \, ,
        \end{array}
        \end{equation*}
        where ${\rm{dist}}$ denotes the Euclidean distance in $\mathbb{R}^d$.
\end{itemize}

If there exists $\overline{\eta} > 0$ such that $d_{\pm} \in C^2(D^{\overline{\eta}})$, which is the case when $D$ is a bounded domain with a $C^2$-regular boundary, see \cite[Lemma~14.16]{Boundary_distance}, we often consider a regularized and truncated distance function:

\vspace{-10pt}

\begin{equation}\label{regularized_distance}
\begin{array}{cc}
     \mathbf{d} \coloneqq (1 - \phi) d_{\pm} + R \phi \, , & \text{for some } R > 0 \, ,
\end{array}     
\end{equation}
where $\phi \in C^{\infty}(\mathbb{R}^d, [0,1])$ is a smooth function satisfying $\phi \equiv 1$ on $\mathbb{R}^d \backslash D^{\overline{\eta}}$ and $\phi \equiv 0$ on $D^{\overline{\eta} / 2}$. 

Let $(\Omega, \mathcal{F}, \mathbb{P})$ be a probability space with filtration $\mathbb{F} = \{ \mathcal{F}_t, t \ge 0 \}$. We denote by $\mathbb{E}_t \coloneqq \mathbb{E}[\cdot | \mathcal{F}_t]$ the expectation conditional to $\mathcal{F}_t$. Given a normed vector space $A$ and a $\mathbb{F}$-stopping time $\theta$, we define:

\begin{itemize}
    \item $\mathbb{H}^0$ the set of $\mathbb{F}$-progressively measurable processes $\psi : \Omega \times [0, \infty) \rightarrow A$. In the following, we use the standard notation $\psi_t = \psi(\omega, t)$ for any $\psi \in \mathbb{H}^0$.

    \item $\mathbb{H}^{k}_{\theta}$, where $1 \leq k < \infty$, the set of $\psi \in \mathbb{H}^0$ such that $\displaystyle \mathbb{E} \Big( \int_0^\theta |\psi_t|^2 dt \Big)^{k / 2} < \infty$. We write $\psi \in \mathbb{H}^{k, {\rm{loc}}}_{\theta}$ \vspace{-17pt} 
    
    if $\psi \in \mathbb{H}^{k}_{T \wedge \theta}$ for all $T > 0$.

    \item $\mathbb{H}^{\text{BMO}}_{\theta}$ the set of processes $\psi \in \mathbb{H}^0$ such that $\| \psi \|_{{\rm{BMO}}, \theta} \coloneqq \displaystyle \sup_{\kappa \in \mathcal{T}_0^{\theta}} \Big\Vert \mathbb{E}_{\kappa}  \Big(\int_{\kappa}^{\theta} |\psi_t|^2 dt \Big)^{1/2} \, \Big\Vert_{\infty} < \infty$, \vspace{-10pt} 
    
    where $\mathcal{T}_0^{\theta}$ is the set of stopping times $\kappa \leq \theta$, $\mathbb{P}$-a.s. 

    \item $\mathbb{L}^{\infty}(\mathcal{F}_{\theta})$ the set of bounded and $\mathcal{F}_{\theta}$-measurable random variables, and $\mathbb{S}^{\infty}_{\theta}$ the set of pathwise continuous processes $\psi \in \mathbb{H}^0$ such that $\displaystyle  \| \psi_{\wedge \theta} \|_{\infty} < \infty$. 
    
    \item For a local martingale $M$, we denote by $\mathcal{E}(M)_{\theta} \coloneqq \exp (M_{\theta} -\frac{1}{2}\langle M \rangle_{\theta} )$ the stochastic exponential. 

\end{itemize} 

Finally, let $x^+$ and $x^-$ denote respectively $\max(x, 0)$ and $-\min(x, 0)$ if $x\in \mathbb{R}$, and $\ell$ be a smooth truncation at infinity of $-\log$ defined by $\ell$ is smooth, non-increasing and $\ell({\rm{d}}) = -\textnormal{log}({\rm{d}})$ when ${\rm{d}} \in (0, 1/2)$ and $\ell({\rm{d}}) = 0$ when ${\rm{d}} \geq 1$. Such function always exists and there is a constant $L_{\log} > 0$ such that:
\begin{equation}\label{smooth_log_truncation}
\begin{array}{cc}
     \displaystyle |\ell'({\rm{d}}) + 1/{\rm{d}}|+ |\ell''({\rm{d}}) - 1/{\rm{d}}^2| \leq L_{\log} \, , & \text{for all } \, {\rm{d}} > 0 \, .
\end{array}
\end{equation}

Recall the main result of \citeauthor{Lasry_Lions_state_constraints} \cite[Theorem~II.1]{Lasry_Lions_state_constraints} on the large solutions of the nonlinear elliptic partial differential equation \eqref{HJB_singular}. Let $p \in (1, 2]$ and denote by $\Phi_p : (0, \infty) \rightarrow \mathbb{R}$ the function:
\begin{equation}\label{Phi_p}
\begin{array}{cc}
     \displaystyle \Phi_p({\rm{d}}) = {\rm{d}}^{-r} \, \mathbbm{1}_{\{1 < p < 2\}} \, + \, \ell({\rm{d}}) \, \mathbbm{1}_{\{p=2\}} \, , & \text{where } \,\displaystyle  r = \frac{2-p}{p-1} \, .
\end{array}
\end{equation}

\begin{theorem}[\cite{Lasry_Lions_state_constraints}]\label{Lasry_Lions_summary}
    Let $D$ be an open bounded domain of $\mathbb{R}^d$ with $C^2$-regular boundary, $p \in (1, 2]$ and $g \in L^{\infty}(D)$. Then there exists a unique solution $u \in \bigcap_{k \ge 1} W^{2,k}_{\textnormal{loc}}(D)$ of \eqref{HJB_singular} satisfying $u(x)\rightarrow +\infty$ when $x \rightarrow \partial D$. Moreover, $\| \mathbf{d}^{r+1}\nabla u \|_{L^{\infty}(D)} < \infty$ and $u$ satisfies near the boundary:
    \begin{equation*}\label{limit_u_boundary_subquad}
        \frac{u(x)}{\Phi_p \! \circ  \mathbf{d}(x)} \xrightarrow[d(x) \rightarrow 0]{} C_0  \coloneqq
        \frac{(p-1)^{-\frac{2-p}{p-1}}}{2-p} \big( \frac{p}{2} \big)^{\frac{1}{p-1}}  \mathbbm{1}_{\{1 < p < 2\}} + \mathbbm{1}_{\{p=2\}} \, .
    \end{equation*}
\end{theorem}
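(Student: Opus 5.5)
The plan is to follow the classical barrier-and-comparison strategy of Lasry and Lions. Existence comes from an increasing family of bounded approximations. Uniqueness comes from a scaling argument that uses the convexity of the Hamiltonian $q \mapsto \frac1p|q|^p$. The explosion rate comes from explicit sub- and supersolutions built on the profile $\Phi_p\circ\mathbf{d}$.

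\textbf{Barriers.} Compute the operator $\mathcal{L}v \coloneqq -\frac12\Delta v + \frac1p|\nabla v|^p + \lambda v$ on $v = C\,\Phi_p\circ \mathbf{d}$ near $\partial D$, using $|\nabla d_\pm| = 1$ and the boundedness of $\Delta d_\pm$ on $D^{\overline\eta}$ (note that $h$ denotes the data, written $g$ in the statement).
\begin{itemize}
\item For $1<p<2$, the singular terms are $-\frac12 C r(r+1) d^{-r-2}$ and $\frac1p (Cr)^p d^{-(r+1)p}$. The identity $(r+1)p = r+2$ makes them of the same order. They balance exactly when $\frac1p (Cr)^p = \frac12 Cr(r+1)$, which reproduces the constant $C_0$.
\item For $p=2$ the analogous computation with $\ell$ and \eqref{smooth_log_truncation} gives $C_0=1$.
\end{itemize}
Lower-order terms (curvature terms in $d^{-r-1}$, $\lambda v$, $h$) are dominated. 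Hence for each $\varepsilon>0$, $(C_0\pm\varepsilon)\Phi_p\circ\mathbf{d} \pm K$ are a supersolution and a subsolution respectively in a thin layer $\{d<\eta_\varepsilon\}$. These can be extended to global barriers by adding a large constant and, if needed, a shift $d \mapsto d\pm\sigma$, so the supersolution blows up on an interior parallel surface.

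\textbf{Existence.} Let $u_n \in W^{2,k}_{\rm loc}$ solve the Dirichlet problem with data $u_n = n$ on $\partial D$. This is classical for gradient growth of order at most $2$: use a priori $L^\infty$ bounds from $\lambda>0$, Bernstein-type interior gradient bounds, then $W^{2,k}$ estimates and Leray–Schauder. Comparison, valid since the Hamiltonian is locally Lipschitz in the gradient and $\lambda>0$, gives that $u_n$ is nondecreasing in $n$. It also gives $u_n$ bounded above on compact subsets uniformly in $n$, by comparing with the supersolution built on shifted domains $D_\sigma=\{d>\sigma\}$, which blows up on $\partial D_\sigma$.

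Interior local gradient bounds (Bernstein, or Lions' $|\nabla u| \le C(1+d^{-r-1})$ estimate) and $W^{2,k}_{\rm loc}$ estimates let us pass to the limit $u=\lim u_n$, which solves \eqref{HJB_singular}. Comparison with the subsolution gives $u\to\infty$ at $\partial D$. The gradient estimate $\|\mathbf d^{r+1}\nabla u\|_\infty<\infty$ follows by a rescaling argument: in a ball $B(x,d(x)/2)$, the function $w(y)=d^{r}\,u(x+d\,y)$ minus a constant satisfies an equation of the same type with bounded coefficients and bounded oscillation, by the two-sided barriers. Interior estimates then give $|\nabla w|\le C$, i.e. $|\nabla u|\le C d^{-r-1}$. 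For $p=2$ one uses $u - \log$-type normalization instead, or the Hopf–Cole transform.

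\textbf{Uniqueness and asymptotics.} The asymptotics follow from comparing $u$ with the $\varepsilon$-barriers on $\{d<\eta_\varepsilon\}$, with the boundary values on the inner surface controlled by constants. This yields $\limsup,\liminf$ of $u/(\Phi_p\circ\mathbf d)$ within $C_0\pm\varepsilon$.

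For uniqueness, let $u,v$ be two large solutions, so $u/v\to1$ at $\partial D$. For $\theta<1$, convexity gives that $\theta u + (1-\theta)\underline{m}$ is a subsolution up to a bounded error. The standard Lasry–Lions device uses $w_\theta=\theta u - K(1-\theta)$, which is a subsolution because $\frac1p|\theta\nabla u|^p \le \theta\frac1p|\nabla u|^p$ and $\lambda$ absorbs $h$. Moreover $w_\theta - v\to -\infty$ at $\partial D$ since $u\sim v$ and $\theta<1$. Comparison gives $w_\theta\le v$; letting $\theta\to1$ gives $u\le v$, and symmetry gives equality.

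\textbf{Main obstacle.} I expect the main difficulty to be the precise comparison near a boundary where both functions blow up, and the sharp constant for $p=2$. There the leading terms cancel at logarithmic scale, and the subleading $O(1)$ corrections from $\Delta d$ and $h$ must be handled carefully, using the $\pm\sigma$ shift to keep strictness.
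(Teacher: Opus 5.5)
The paper gives no proof of this statement: it quotes it from Lasry--Lions \cite{Lasry_Lions_state_constraints}, so there is no internal proof to match. Your outline is the classical Lasry--Lions argument, and it is sound. It is also the scheme the paper transposes to BSDEs:
\begin{itemize}
\item Your barrier computation is Proposition~\ref{lasry_lions_estimates} with $\sigma_0=\sigma_1=1$ and $k_0=k_1=1/p$. These values give $C_0=C_1$, both equal to the constant in the statement.
\item Your increasing approximations with $u_n=n$ on $\partial D$ are the $Y^R$ in the proof of Theorem~\ref{existence_thm}.
\end{itemize}

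The one place where your route genuinely differs from the paper's analogue is uniqueness. Because $C_0=C_1$ here, you show by local comparison with shifted barriers in a boundary layer that every large solution satisfies $u/\Phi_p(\mathbf d)\to C_0$, hence $u/v\to 1$. The one-line subsolution $\theta u-K(1-\theta)$ with $K\ge \|h\|_\infty/\lambda$ then suffices. Theorem~\ref{uniqueness_thm} only has two-sided bounds with possibly $C_0\neq C_1$, so there the ratio need not tend to $1$. The paper therefore builds a maximal solution and runs the $\theta_0=\sup\{\dots\}$ argument with convex combinations against the subsolutions $y^\eta$. That is more robust, but unnecessary in the setting of this statement.

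Two small remarks.
\begin{itemize}
\item For Leray--Schauder on the approximating Dirichlet problems you need global gradient bounds up to $\partial D$, obtained from boundary barriers. Interior Bernstein bounds alone are not enough, though this step is classical.
\item The ``main obstacle'' you anticipate does not arise in your argument. Every comparison you perform has a strict boundary inequality, thanks either to the shifts $d\mapsto d\pm\sigma$ or to the factor $\theta<1$. For $p=2$ the $\varepsilon$-margin sits at order $d^{-2}$, with coefficient $\tfrac12 C(C-1)$ for $C=1\pm\varepsilon$, not at logarithmic scale. So the $O(d^{-1})$ curvature terms and the bounded $h$ are absorbed exactly as for $p<2$.
\end{itemize}
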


This result was later proved to hold in the specific unbounded domain $D = \mathbb{R}_+^* \times \mathbb{R}^{d-1}$ in the appendix of \citeauthor{porretta_veron} \cite{porretta_veron}. 

\subsection{The path-dependent setting}

Let $W = (W_t)_{t \geq 0}$ be a $\mathbb{R}^d$-valued Brownian motion on $(\Omega, \mathcal{F}, \mathbb{P})$. Denote by $\mathbb{F}  = (\mathcal{F}_t)_{t \geq 0}$ the $\mathbb{P}$-completion of the filtration generated by $W$. Consider the $\mathbb{R}$-valued Itô process:
\begin{equation}\label{delta_SDE}
    \begin{array}{cc}
         \displaystyle  \delta_t = \delta_0 + \int_0^t \Xi_s ds + \int_0^t U_s \cdot dW_s \, , & \text{for some } \, \delta_0 > 0 \, ,
    \end{array}
\end{equation}
where $\Xi$ and $U$ are measurable and $\mathbb{F}$-adapted processes. We denote by ${\rm{T}}_{\eta}^{\delta}$ the $\mathbb{F}$-adapted stopping time: 
\begin{equation}\label{tau_eta}
    \begin{array}{cccc}
         {\rm{T}}^{\delta}_{\eta} \coloneqq \inf \big\{ t \geq 0, \; \delta_t \leq \eta \big\} & \text{for } \, \eta \in \mathbb{R} \, , & \text{and} & {\rm{T}}^{\delta} \coloneqq {\rm{T}}^{\delta}_0 \, .
    \end{array}
\end{equation}

\vspace{1pt}

\begin{assumption}\label{d_assumptions} The processes $\Xi : \Omega \times [0, \infty) \rightarrow \mathbb{R}$ and $U : \Omega \times [0, \infty) \rightarrow \mathbb{R}^{d}$ satisfy, for some $\overline{\eta} > 0$,
    \begin{itemize}
        \refstepcounter{assumptiontag}
        \item[] \hspace{-30pt} (\theassumptiontag) \label{Xi_bounded} \hspace{1pt} There exist constants $m, L_{\Xi} > 0$ such that:
        \begin{equation*}
            \begin{array}{cc}
                 \displaystyle |\Xi_t| \leq L_{\Xi} (1 + |\delta_t|^m) \, , & \mathbb{P}-\text{a.s. on } \, \{t \leq {\rm{T}}^{\delta}_{-\overline{\eta}} \} \, .
            \end{array}
        \end{equation*}

        \refstepcounter{assumptiontag}
        \item[] \hspace{-30pt} (\theassumptiontag) \label{delta_coef_boundedness} \hspace{1pt} There exist $0 < \sigma_0 \leq \sigma_1$ such that:
        \begin{equation*}
            \begin{array}{cccc}
               |U_t| \leq \sigma_1 \,\text{ on } \, \{t \leq {\rm{T}}^{\delta}_{-\overline{\eta}} \} \,\text{ and } \, |U_t| \geq \sigma_0 \, \text{ on } \, \{ -\overline{\eta} \leq \delta_t \leq \overline{\eta} \, \} \,,  &\mathbb{P}-\text{a.s.}
            \end{array}
        \end{equation*}
        
    \end{itemize}
\end{assumption}

\begin{rmk}
    By introducing $\delta$, we first aim at generalizing $\mathbf{d}(W_{\cdot})$ in the setting of \citeauthor{Lasry_Lions_state_constraints}, which is the regularized distance to the boundary of the state process in Theorem \ref{Lasry_Lions_summary}. Recall that if $\partial D$ is smooth then $\nabla \mathbf{d}$ and $\Delta \mathbf{d}$ are bounded near the boundary. In particular, our formulation allows to replace $\mathbf{d}$ by a so-called anisotropic distance, see \citeauthor{finsler_state_constraints} \cite{finsler_state_constraints}. Moreover, one could imagine a domain $D(\omega, t)$ whose shape varies over time and with the Brownian noise.
\end{rmk}

\begin{rmk}
    Assumption \ref{d_assumptions} does not require that $\mathbb{P}[{\rm{T}}^{\delta} < \infty] = 1$. In particular this is not true for $d=1$ and $\delta_t = \delta_0 + t + W_t$. Indeed, if $\mathbb{P}[{\rm{T}}^{\delta} < \infty] = 1$, then by the Markov property of $W$, there would be a sequence of stopping times $\theta_n \rightarrow \infty$ such that $\delta_{\theta_n} = 0$ a.s., contradicting the fact that $\delta_t \rightarrow \infty$ a.s. \hspace{-7pt} as $t \rightarrow \infty$.
\end{rmk}

Our assumptions on the driver $f : \Omega \times [0, \infty) \times \mathbb{R} \times \mathbb{R}^d \rightarrow \mathbb{R}$ are inspired from \citeauthor{Pozza_thesis} \cite{Pozza_thesis} and slightly generalize those of \citeauthor{Lasry_Lions_state_constraints} \cite{Lasry_Lions_state_constraints}.

\begin{assumption}\label{driver_assumptions}
    \setcounter{assumptiontag}{0}
    
    \begin{itemize}
        \item[]

        \refstepcounter{assumptiontag}
        \item[] \hspace{-30pt} (\theassumptiontag) \label{f_prog_meas} \hspace{1pt} The function $f$ is $\mathbb{F}$-progressively measurable and $(y,z) \mapsto f_t(y,z)$ is continuous for all $t \geq 0$. Moreover, there exists a constant $C_f > 0$ such that, for all $(y, z) \in \mathbb{R} \times \mathbb{R}^{d}$,
        \begin{equation*}
        \begin{array}{cc}
             |f_t(y,z) - f_t(y,z')| \leq C_f \big( 1 + |z| + |z'| \big) |z-z'| \, , &  d\mathbb{P} \otimes dt-\text{a.e.}
        \end{array}
        \end{equation*}

        \refstepcounter{assumptiontag}
        \item[] \hspace{-30pt} (\theassumptiontag)\label{f_monotonicity} \hspace{1pt} There exists $\lambda_0 > 0$ such that, for all $(y, z) \in \mathbb{R} \times \mathbb{R}^{d}$,
        \begin{equation*}
            \begin{array}{cc}
            (y-y') (f_t(y,z) - f_t(y',z)) \leq - \lambda_0 |y-y'|^2 \, , &  d\mathbb{P} \otimes dt-\text{a.e.}
        \end{array}
        \end{equation*}

        \refstepcounter{assumptiontag}
        \item[] \hspace{-30pt} (\theassumptiontag) \label{p_growth} \hspace{1pt} There exists $p \in (1, 2]$, $k_0 \geq k_1 > 0$, $\lambda_1 \geq \lambda_0$ and $\vartheta > 0$ such that the constant $m$ introduced in Assumption \ref{d_assumptions} satisfies $m \leq \frac{1}{p-1}$ and
        \begin{equation*}
            \begin{array}{cc}
                 \displaystyle - k_0 |z|^p - \lambda_1 |y| - \vartheta \leq f_t(y, z) \leq - k_1 |z|^p + \lambda_1 |y| + \vartheta \, , & \text{for all }\, (y, z) \in \mathbb{R} \times \mathbb{R}^{d}\, , \,  d\mathbb{P} \otimes dt-\text{a.e.}
            \end{array}
        \end{equation*}
        
    \end{itemize}
\end{assumption}

Assumptions \eqrange{f_prog_meas}{f_monotonicity} are natural in the literature on quadratic BSDEs and essential for applying known results on random horizon quadratic backward SDEs, see \citeauthor{QBSDE_random_time} \cite{QBSDE_random_time}. Assumption \eqref{p_growth} is needed to adapt the a priori estimates of \citeauthor{Lasry_Lions_state_constraints} \cite{Lasry_Lions_state_constraints} and we will use it to prove the existence of a solution to the singular BSDE \eqref{BSDE_LL}.

\section{Main results}\label{sec:main_results}

We state here our main results, and we postpone most of the proofs to the remaining of the paper.

\subsection{Singular backward SDE with controlled explosion}

We first define the notion of solutions for the backward SDE \eqref{BSDE_LL} with singular terminal condition, which we call ${\rm{BSDE}}_{\infty}({\rm{T}}^{\delta}, f)$. The main idea is to localize through the level hitting times ${\rm{T}}^{\delta}_{\eta}$. Solutions remain bounded on every pre-singular level set, and supersolutions are allowed to explode at a limiting level. This formulation is stable under comparison and is convenient for the construction. Note that ${\rm{T}}^{\delta}_{\eta} \rightarrow {\rm{T}}^{\delta}_{\overline{\eta}}$ when $\eta \rightarrow \overline{\eta}$.

\begin{definition}\label{singular_def}
    Let $\theta$ be a $\mathbb{F}$-stopping time and $(Y, Z)$ an adapted process with values in $\mathbb{R} \times \mathbb{R}^d$. Up to the integrability of $Y$ and $Z$ which we explicit below, we define the continuous process:

    \vspace{-5pt}
    
        \begin{equation*}
            \begin{array}{cc}
                 \displaystyle K_t \coloneqq Y_0 - Y_t - \int_0^t f_s(Y_s, Z_s) \, ds  + \int_0^t Z_s \cdot dW_s \, , & t < \theta \, .
            \end{array}
        \end{equation*}
        
    \begin{enumerate}[leftmargin=.5cm]
        \item If $\xi \in \mathbb{L}^{\infty}(\mathcal{F}_{\theta})$ and $(Y, Z) \in \mathbb{S}^{\infty}_{\theta}(\mathbb{R}) \times \mathbb{H}^{2, {\rm{loc}}}_{\theta}(\mathbb{R}^d)$, we say that:
        \begin{itemize}[leftmargin=.3cm]
           \item[a.] $(Y, Z)$ is a supersolution of\hspace{.5pt} ${\rm{BSDE}}_{\xi}(\theta, f)$ if $K$ is non-decreasing and $\displaystyle \liminf_{t \to \theta } Y_{t} \geq \xi$ on $\{ \theta < \infty\}$.

           \vspace{5pt}

            \item[b.] $(Y, Z)$ is a subsolution of\hspace{.5pt} ${\rm{BSDE}}_{\xi}(\theta, f)$ if $K$ is non-increasing and $\displaystyle \limsup_{t \to \theta } Y_{t} \leq \xi$ on $\{ \theta < \infty\}$.
        \end{itemize}

        \item If $\theta = {\rm{T}}^{\delta}$ and $\xi \equiv +\infty$, we say that:
        \begin{itemize}[leftmargin=.3cm]
            \item[a.] $(Y, Z)$ is a supersolution of\hspace{.5pt} ${\rm{BSDE}}_{\infty}({\rm{T}}^{\delta}, f)$ if there exists $\eta_{\textnormal{sup}} \geq 0$ such that, for all $\eta > \eta_{\textnormal{sup}}$, $\| Y_{{\rm{T}}^{\delta}_{\eta}} \|_{\infty} < \infty$, $(Y, Z)$ is a supersolution of\hspace{.5pt} ${\rm{BSDE}}_{Y_{{\rm{T}}^{\delta}_{\eta}}}({\rm{T}}^{\delta}_{\eta}, f)$ and $\displaystyle \lim_{\eta \rightarrow \eta_{\textnormal{sup}} } Y_{{\rm{T}}^{\delta}_{\eta}} = +\infty$ on $\{ {\rm{T}}^{\delta}_{\eta_{\textnormal{sup}}} \! \!< \! \infty\}$.

            \vspace{5pt}

            \item[b.] $(Y, Z)$ is a subsolution of\hspace{.5pt} ${\rm{BSDE}}_{\infty}({\rm{T}}^{\delta}, f)$ if there exists $\eta_{\textnormal{sub}} \leq 0$ such that, for all $\eta > \eta_{\textnormal{sub}}$, $\| Y_{{\rm{T}}^{\delta}_{\eta}} \|_{\infty} < \infty$ and $(Y, Z)$ is a subsolution of\hspace{.5pt} ${\rm{BSDE}}_{Y_{{\rm{T}}^{\delta}_{\eta}}}({\rm{T}}^{\delta}_{\eta}, f)$.
        \end{itemize}
        
        \item We say that $(Y,Z)$ is a solution of\hspace{.5pt} ${\rm{BSDE}}_{\xi}(\theta, f)$ or\hspace{.5pt} ${\rm{BSDE}}_{\infty}({\rm{T}}^{\delta}, f)$ if it is both a subsolution and a supersolution. We say that $(Y, Z)$ is the minimal solution if $Y \leq Y'$, $d\mathbb{P} \otimes dt-$a.e. \hspace{-7pt} for any other solution $(Y', Z')$.
    \end{enumerate}
\end{definition}

\begin{rmk}\label{stopping_time_def_rmk}
    If $(Y, Z)$ is a supersolution of ${\rm{BSDE}}_{\infty}({\rm{T}}^{\delta}, f)$ and $\tau \coloneqq \inf \{ t \geq 0 , \, Y_t = +\infty \}$, then $\tau = {\rm{T}}^{\delta}_{\eta_{\textnormal{sup}}}$ a.s. First, it is clear that $\tau \leq {\rm{T}}^{\delta}_{\eta_{\textnormal{sup}}}$. Now let $t \geq 0$ and notice that:
    \begin{equation*}
    \begin{array}{cc}
         \displaystyle \{ \forall s \leq t, \,  \delta_t > \eta_{\textnormal{sup}} \} = \bigcup_{N \in \mathbb{N}} \big\{ \forall s \leq t, \, \delta_t \geq \eta_{\textnormal{sup}} + 2^{-N} \big\} \subseteq \{ \forall s \leq t, \, Y_s < +\infty\}  \, , & \mathbb{P}-\text{a.s.}
    \end{array}
    \end{equation*}

    \vspace{-8pt}
    
    \noindent This implies that $\mathbb{P}[{\rm{T}}^{\delta}_{\eta_{\textnormal{sup}}} > t ] \leq \mathbb{P}[ \tau > t ]$ and we deduce that $\tau \geq {\rm{T}}^{\delta}_{\eta_{\textnormal{sup}}}$ a.s.
\end{rmk}

As mentioned in the introduction, the construction of solutions to \eqref{BSDE_LL} is based on the estimates derived by \citeauthor{Lasry_Lions_state_constraints} in \cite{Lasry_Lions_state_constraints} and requires a comparison principle for backward SDEs with random horizon and quadratic growth. A similar comparison result was introduced by \citeauthor{kobylanski} \cite[Theorem~2.6]{kobylanski} when the terminal time $\theta$ is bounded or $\mathbb{P}-$a.s. finite. Our proof of Lemma \ref{comp_principle} below involves the special integrability of the process $Z$ (namely bounded mean oscillations) and holds for any terminal time $\theta$. It is inspired from the result of \citeauthor{QBSDE_random_time} \cite[Lemma~3.4]{QBSDE_random_time}.

\begin{lemma}[Comparison Principle]\label{comp_principle}
    Let $\theta$ be a $\mathbb{F}-$stopping time, $\xi, \xi'$ be $\mathcal{F}_{\theta}$-measurable random variables with $\| \xi \|_{\infty} < \infty$ and satisfying either one the following conditions:
    \begin{equation*}
        \begin{array}{ccc}
             \rm{(\ast)} \; \;  \| \xi' \|_{\infty} < \infty \, , & \text{or} & \rm{(\ast \ast)} \; \; \theta = {\rm{T}}^{\delta} \, \text{ and } \, \xi' = +\infty \, .
        \end{array}
    \end{equation*}
    Let $f, f'$ be functions satisfying Assumptions {\rm\eqrange{f_prog_meas}{f_monotonicity}}. Let $(Y,Z)$ be a subsolution of\hspace{.5pt} ${\rm{BSDE}}_{\xi}(\theta, f)$ and $(Y',Z')$ be a supersolution of\hspace{.5pt} ${\rm{BSDE}}_{\xi'}(\theta, f')$. Suppose that $(f_t - f'_t)(Y_t, Z_t) \leq 0$ and $\xi \leq \xi'$, $d\mathbb{P} \otimes dt-$a.e. Then $Y \leq Y'$ a.s. Moreover, if $Y_0 = Y_0'$ then $Y = Y'$ a.s.
\end{lemma}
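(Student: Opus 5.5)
The plan is to linearize the difference $\delta Y \coloneqq Y - Y'$, apply Tanaka's formula to its positive part with an exponential weight, remove the martingale term by a Girsanov change of measure built from the BMO property of $Z$ and $Z'$, and finally let the horizon tend to $\theta$. In case $(\ast\ast)$ the horizon is first localized at ${\rm{T}}^{\delta}_{\eta}$.

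\textbf{Linearization.} Since $Y$ and $Y'$ are bounded on $[0,\theta]$ (on $[0,{\rm{T}}^{\delta}_\eta]$ in case $(\ast\ast)$), Assumption \eqref{p_growth}-type bounds are not needed. The argument uses only \eqref{f_prog_meas}--\eqref{f_monotonicity} together with the standard quadratic BSDE argument: Itô's formula applied to $e^{cY}$ for a suitably large $c$, with $K$ monotone, shows that $Z,Z'\in\mathbb{H}^{\rm BMO}_{\theta}$. We write
\begin{equation*}
f_t(Y_t,Z_t)-f'_t(Y'_t,Z'_t) = (f_t-f'_t)(Y_t,Z_t) + \big(f'_t(Y_t,Z_t)-f'_t(Y'_t,Z_t)\big) + \beta_t\cdot(Z_t-Z'_t),
\end{equation*}
where $|\beta_t| \le C_f(1+|Z_t|+|Z'_t|)$ by \eqref{f_prog_meas}, so that $\beta\in\mathbb{H}^{\rm BMO}$. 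The first term is $\le 0$ by hypothesis. On $\{\delta Y>0\}$, monotonicity \eqref{f_monotonicity} gives that the second term is $\le -\lambda_0\,\delta Y_t$.

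\textbf{Tanaka step and Girsanov.} By Tanaka's formula, $(\delta Y)^+$ satisfies
\begin{equation*}
d(\delta Y_t)^+ \le \mathbbm{1}_{\{\delta Y_t>0\}}\big(\lambda_0\,\delta Y_t\,dt - \beta_t\cdot\delta Z_t\,dt + \delta Z_t\cdot dW_t\big).
\end{equation*}
Here the $dK$ terms carry the favourable sign, since $K$ is non-increasing and $K'$ is non-decreasing, and the local time is dropped. By Kazamaki's criterion, $\mathcal{E}(\int\beta\cdot dW)$ is a uniformly integrable martingale on $[0,\theta]$, which defines $\mathbb{Q}$ with $W^{\mathbb{Q}}=W-\int\beta\,dt$. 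Under $\mathbb{Q}$, the process $e^{-\lambda_0 t}(\delta Y_t)^+$ is a local supermartingale, and since it is bounded it is a true supermartingale. Hence for every stopping time $\sigma\le\theta$ and every $T>0$,
\begin{equation*}
e^{-\lambda_0 t}(\delta Y_t)^+ \le \mathbb{E}^{\mathbb{Q}}_t\big[e^{-\lambda_0 (T\wedge\sigma)}(\delta Y_{T\wedge\sigma})^+\big] \quad \text{on } \{t \le T\wedge\sigma\}.
\end{equation*}

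\textbf{Passage to the horizon.} This is the main obstacle, because $\theta$ may be infinite with positive probability and $\mathbb{Q}$ need not be equivalent-friendly at infinity. In case $(\ast)$ we take $\sigma=\theta$ and let $T\to\infty$. On $\{\theta=\infty\}$, the bound $e^{-\lambda_0 T}\|\delta Y\|_\infty\to0$ disposes of the term. On $\{\theta<\infty\}$, bounded convergence together with $\limsup Y\le\xi\le\xi'\le\liminf Y'$ gives $\limsup(\delta Y)^+\le0$. Note that the discount $\lambda_0>0$ is exactly what handles infinite horizons, and no finiteness of $\theta$ is needed.

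In case $(\ast\ast)$ we use Definition \ref{singular_def}: $Y'$ is bounded on $[0,{\rm{T}}^\delta_\eta]$ for $\eta>\eta_{\rm sup}\ge 0$, while $Y\le\|\xi\|_\infty+\ldots$ is bounded on $[0,\theta]$. We apply the previous bound with $\sigma={\rm{T}}^\delta_\eta$ and $T\to\infty$, obtaining
\begin{equation*}
e^{-\lambda_0 t}(\delta Y_t)^+ \le \mathbb{E}^{\mathbb{Q}}_t\big[e^{-\lambda_0{\rm{T}}^\delta_\eta}(Y_{{\rm{T}}^\delta_\eta}-Y'_{{\rm{T}}^\delta_\eta})^+\mathbbm{1}_{\{{\rm{T}}^\delta_\eta<\infty\}}\big].
\end{equation*}
We then let $\eta\downarrow\eta_{\rm sup}$. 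Since $Y'_{{\rm{T}}^\delta_\eta}\to+\infty$ on $\{{\rm{T}}^\delta_{\eta_{\rm sup}}<\infty\}$ while $Y$ stays bounded, the integrand vanishes there. On the complement, the integrand vanishes for $\eta$ small, because ${\rm{T}}^\delta_\eta\to\infty$ and so the exponential factor kills the bounded term. Dominated convergence then gives $(\delta Y)^+=0$ for $t<{\rm{T}}^\delta_{\eta_{\rm sup}}$. This is where I expect care to be needed: one has to reconcile the measure $\mathbb{Q}$ (built on $[0,\theta]$ from $Z'$, which is only BMO up to ${\rm{T}}^\delta_\eta$) with the limit. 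The remedy is to define $\beta$ on each $[0,{\rm{T}}^\delta_\eta]$ and use the consistency of the corresponding $\mathbb{Q}_\eta$ on $\mathcal{F}_{{\rm{T}}^\delta_\eta}$, applying the bound on $\{t<{\rm{T}}^\delta_\eta\}$ only.

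\textbf{Strict comparison.} If $Y_0=Y'_0$, the supermartingale inequality for $e^{-\lambda_0 t}(Y'_t-Y_t)\ge0$ under $\mathbb{Q}$, whose drift is $\ge0$ in the reverse direction, gives a nonnegative $\mathbb{Q}$-supermartingale starting at $0$. It is therefore identically $0$, and equivalence of $\mathbb{Q}$ and $\mathbb{P}$ on each $\mathcal{F}_{t\wedge\theta}$ yields $Y=Y'$.
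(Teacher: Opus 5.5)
Your case $(\ast)$ follows essentially the paper's route: Tanaka on $e^{-\lambda_0 t}(\Delta Y)^+$, linearization in $z$, Girsanov. There are, however, two genuine gaps, one in case $(\ast\ast)$ and one in the strict-comparison step.

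\textbf{Case $(\ast\ast)$.} Your claim that $Z,Z'$, and hence $\beta$, are BMO on all of $[0,\theta]$ fails when $\theta$ is unbounded. The exponential-transform computation only gives $\mathbb{E}_\kappa\int_\kappa^\theta|Z_s|^2ds\le K\big(1+\|\mathbb{E}_\kappa[\theta-\kappa]\|_\infty\big)$ (compare Lemma~\ref{BMO_lemma}). That bound is infinite once $\mathbb{P}[\theta=\infty]>0$, which is exactly the situation $\theta={\rm T}^\delta$. For instance, with $d=1$, $f_t(y,z)=-\lambda y+\cos W_t$ and $\theta=\infty$, one gets $Z_t=-\sin(W_t)/(\lambda+\frac12)\notin\mathbb{H}^2_\infty$. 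So there is no single $\mathbb{Q}$ on $[0,\theta]$; you only have measures $\mathbb{Q}_T$ on $[0,T\wedge\theta]$.

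In case $(\ast)$ this is harmless, and it is what the paper does. On $\{\theta\le T\}$ one has $(\Delta Y_\theta)^+\le(\xi-\xi')^+=0$. On $\{\theta>T\}$ the integrand is at most $e^{-\lambda_0T}\|\Delta Y\|_\infty$, a bound independent of $\mathbb{Q}_T$, so no convergence theorem is needed as $T\to\infty$.

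In case $(\ast\ast)$, however, your terminal term $(Y_{{\rm T}^\delta_\eta}-Y'_{{\rm T}^\delta_\eta})^+$ vanishes only in the limit $\eta\downarrow\eta_{\rm sup}$, and only $\mathbb{P}$-a.s. The densities $\mathcal{E}(\int\beta\cdot dW)_{T\wedge{\rm T}^\delta_\eta}$ are built from $Z'$, which blows up at ${\rm T}^\delta_{\eta_{\rm sup}}$. They therefore need not be uniformly integrable in $\eta$. When they are not, any measure extending the consistent family $(\mathbb{Q}_\eta)$ has a part singular to $\mathbb{P}$, where $\mathbb{P}$-a.s. convergence says nothing. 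Your consistency remark thus does not justify the dominated-convergence step.

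The missing idea is to localize along level sets of $Y'$ rather than of $\delta$. Take $\theta_N\coloneqq\inf\{t\ge0:Y'_t\ge N\}$ with $N\ge\|Y\|_\infty$. Then $Y_{\theta_N}\le N=Y'_{\theta_N}$ holds pathwise, so case $(\ast)$ applies on $[0,\theta_N]$ with no limit of expectations at all. Finally $\theta_N\uparrow{\rm T}^\delta_{\eta_{\rm sup}}$ by Remark~\ref{stopping_time_def_rmk}.

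\textbf{Strict comparison.} Under $\mathbb{Q}$, the drift of $e^{-\lambda_0t}(Y'_t-Y_t)$ is
$e^{-\lambda_0t}\big[f'_t(Y_t,Z_t)-f'_t(Y'_t,Z_t)-\lambda_0(Y'_t-Y_t)+(f_t-f'_t)(Y_t,Z_t)\big]dt+e^{-\lambda_0t}(dK_t-dK'_t)$.
Since $Y\le Y'$, monotonicity gives $f'_t(Y_t,Z_t)-f'_t(Y'_t,Z_t)-\lambda_0(Y'_t-Y_t)\ge0$. Only $(f-f')(Y,Z)$ and $dK-dK'$ have the favourable sign, so no supermartingale property follows.

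The same sign slip appears in your Tanaka step. Forward in time, $d(\Delta Y)^+\ge\mathbbm{1}_{\{\Delta Y>0\}}(\ldots)$, because every dropped term, local time included, is nonnegative. Hence $e^{-\lambda_0t}(\Delta Y)^+$ is a local \emph{sub}martingale, which is what your conditional inequality actually uses.

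Assumption \eqref{f_monotonicity} controls $f'(y)-f'(y')$ from one side only. You would need $|f'_t(y,z)-f'_t(y',z)|\le L|y-y'|$. Then $e^{-Lt}(Y-Y')$ is a bounded, nonpositive $\mathbb{Q}_T$-local submartingale starting at $0$, hence identically zero.

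Without such a bound the assertion is false. Take $\theta=1$ and $f=f'=-\lambda_0y-{\rm sign}(y)\sqrt{|y|}$, which satisfies \eqref{f_prog_meas}--\eqref{f_monotonicity}. Then $(Y,Z)=(0,0)$ solves ${\rm BSDE}_0(1,f)$. Also $Y'_t=\big((e^{\lambda_0t/2}-1)/\lambda_0\big)^2$, $Z'=0$ solves ${\rm BSDE}_{Y'_1}(1,f)$ with $Y'_1>0$. Here $Y_0=Y'_0=0$ but $Y\neq Y'$.

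The paper's one-line justification (the same argument applied to $(\Delta Y)^-$) does not handle this either. This last claim of the lemma needs an additional Lipschitz condition in $y$.
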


The proof of Lemma \ref{comp_principle}, reported in Section \ref{BMO_comp_principle_sec}, essentially relies on a Girsanov transformation and does not require any condition on the terminal time $\theta$ nor the specific form of the generator we consider here, in particular Assumption \eqref{p_growth} is not needed.

\begin{rmk}\label{bounded_below_rmk}
      Let $(Y, Z)$ be a solution of \eqref{BSDE_LL}, then $ Y \geq \frac{\overline{m}}{\lambda_0}$ where $ \overline{m} \coloneqq  ({\rm{ess}}\inf f_t(0, 0))^-$ a.s. Indeed, defining $(y_t, z_t) \coloneqq \big(\frac{\overline{m}}{\lambda_0}, 0 \big)$ and using the monotonicity of $f$ in $y$ of Assumption \eqref{f_monotonicity} together with $y_t = \frac{\overline{m}}{\lambda_0} \leq 0$, one obtains $f_t(y_t, z_t) \geq f_t(0, 0) - \lambda_0 y_t \geq 0$. Therefore $(y, z)$ is a bounded subsolution of the singular BSDE \eqref{BSDE_LL}. The claim follows from the comparison result of Lemma \ref{comp_principle}.
\end{rmk}

\begin{rmk}
    The growth condition of Assumption \eqref{p_growth} is essential for the existence of a solution of\hspace{.5pt} ${\rm{BSDE}}_{\infty}({\rm{T}}^{\delta}, f)$ in the sense of Definition \ref{singular_def}. For example, take $f_t(y, z) = -|z| -\lambda y$ for some $\lambda > 0$ and let $(Y, Z)$ be a solution of \eqref{BSDE_LL}. Suppose also that $\mathbb{E}[e^{{\rm{T}}^{\delta} / 2}] < \infty$. Then, applying Itô's formula up to the bounded stopping time $\theta_{\eta} \coloneqq {\rm{T}}^{\delta}_{\eta} \wedge \eta^{-1}$ for some $\eta > 0$,

    \vspace{-2pt}
    
    \begin{equation*}
        \begin{array}{ccc}
            \displaystyle Y_0 = e^{-\lambda \theta_{\eta}} Y_{\theta_{\eta}} - \int_0^{\theta_{\eta}} e^{-\lambda s} Z_s \cdot \big( dW_s + \beta_s ds \big) \, , & \text{where} & \displaystyle \beta_s = \frac{Z_s}{|Z_s|} \mathbbm{1}_{Z_s \neq 0} \, .
        \end{array}
    \end{equation*}

    \vspace{-2pt}
    
    \noindent Notice that the Novikov criterion $\mathbb{E} \big[ e^{\frac{1}{2} \int_0^{{\rm{T}}^{\delta}} |\beta_s|^2 ds} \big] \leq \mathbb{E} [e^{{\rm{T}}^{\delta} / 2}] < \infty$ holds, so there exists a probability distribution $\mathbb{Q}$ equivalent to $\mathbb{P}$ on $\mathcal{F}_{{\rm{T}}^{\delta}}$ such that $Y_{t \wedge \theta_{\eta}} = \mathbb{E}^{\mathbb{Q}}_{t \wedge \theta_{\eta}} \big[ e^{-\lambda \theta_{\eta}} Y_{\theta_{\eta}} \big]$. The process $Y$ being bounded from below by Remark \ref{bounded_below_rmk}, we have, by Fatou's Lemma, for all $t, \eta > 0$,
    \begin{equation*}
    \begin{array}{cc}
         \displaystyle Y_{t \wedge {\rm{T}}^{\delta}_{\eta}} = \liminf_{\eta' \rightarrow 0} \mathbb{E}^{\mathbb{Q}}_{t \wedge {\rm{T}}^{\delta}_{\eta}} \big[ e^{-\lambda \theta_{\eta'}} Y_{\theta_{\eta'}} \big] \geq \mathbb{E}_{t \wedge {\rm{T}}^{\delta}_{\eta}}^{\mathbb{Q}} \big[ e^{-\lambda {\rm{T}}^{\delta}} Y_{{\rm{T}}^{\delta}} \big] = +\infty \, , & \text{a.s. \hspace{-7pt} since } \, \mathbb{P}[{\rm{T}}^{\delta} < \infty] > 0  \, .
    \end{array}
    \end{equation*}

    \vspace{-5pt}
    
    \noindent Then $Y \equiv \infty$ a.s. \hspace{-7pt} on $[0, {\rm{T}}^{\delta}_{\eta}]$, which contradicts the fact that $Y \in \mathbb{S}^{\infty}_{{\rm{T}}^{\delta}_{\eta}}$.
\end{rmk}

\subsection{Well-posedness of the singular backward SDE}

We state our main existence and uniqueness results for the singular backward SDE \eqref{BSDE_LL}. Let ${\rm{T}}^{\delta}$ be the stopping time defined in \eqref{tau_eta}, and recall that $r = \frac{2-p}{p-1}$. In the rest of the paper, we consider the two constants:
\begin{equation}\label{c_0,C_0}
    \begin{array}{cc}
         \displaystyle C_i \coloneqq  \frac{(p-1)^{-\frac{2-p}{p-1}}}{2-p} \left(\frac{\sigma_i^{2}}{2 k_i \sigma_{1-i}^p} \right)^{\frac{1}{p-1}}\mathbbm{1}_{\{1 < p < 2\}} \, + \,  \frac{1}{2 k_i} \mathbbm{1}_{\{p=2\}} \, , & i \in \{0, 1\} \, .
    \end{array}
\end{equation} 

\begin{theorem}[Existence and estimates]\label{existence_thm}
    Let Assumptions {\rm{\ref{d_assumptions}}} and {\rm{\ref{driver_assumptions}}} hold. Then, 
    
    \noindent {\rm{(i)}} there exists a minimal solution $(Y, Z)$ of ${\rm{BSDE}}_{\infty}({\rm{T}}^{\delta}, f)$, and $Y$ satisfies, for all $\varepsilon > 0$,
    \vspace{-5pt}
    \begin{equation}\label{bounds_y}
    \begin{array}{cc}
         \displaystyle (C_0 - \varepsilon) \Phi_p(\delta_t) - \frac{c_{\varepsilon}}{\lambda_1} \; \leq \; Y_t \; \leq \; (C_1 + \varepsilon) \Phi_p(\delta_t) + \frac{c_{\varepsilon}}{\lambda_0} \, , & \text{for all } \, t < {\rm{T}}^{\delta} \, ,
    \end{array}
    \end{equation}
    where $c_{\varepsilon} > 0$ is a constant depending only on $p$, $\vartheta$, $k_0$, $k_1$, $\sigma_0$, $\sigma_1$, $\varepsilon$. Moreover, 
    
    \noindent {\rm{(ii)}} assume that:

    \vspace{-15pt}
    
    \begin{equation}\label{delta_BMO}
        \begin{array}{cccc}
             \displaystyle \displaystyle \Phi_p(\delta) \delta^{r + \gamma-1} \mathbbm{1}_{\{ \delta \leq R \}} \in \mathbb{H}^{{\rm{BMO}}}_{\theta} \, , &  \displaystyle\text{for some } \, R > 0,  \gamma \leq 1 \text{ and stopping time } \theta \leq {\rm{T}}^{\delta}  \,.
        \end{array}
    \end{equation}
    Then $Z \delta^{r + \gamma} \mathbbm{1}_{\{ \delta \leq R \}} \in \mathbb{H}^{{\rm{BMO}}}_{\theta}$.
\end{theorem}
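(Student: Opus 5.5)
We construct the minimal solution by monotone approximation, using the comparison principle of Lemma \ref{comp_principle} together with explicit barriers of the form $\Phi_p(\delta)$. First we build barriers. For $1<p<2$ we apply Itô's formula to $\psi(\delta_t)$ with $\psi(\mathrm{d}) = C\mathrm{d}^{-r}$, which gives
\begin{equation*}
d\psi(\delta_t) = \big(\psi'(\delta_t)\Xi_t + \tfrac12 \psi''(\delta_t)|U_t|^2\big)\,dt + \psi'(\delta_t)U_t\cdot dW_t .
\end{equation*}
With $Z_t = \psi'(\delta_t)U_t$, the supersolution condition reduces near $\{\delta=0\}$ to comparing $\tfrac12\psi''|U|^2$ with $k_1|\psi'|^p|U|^p$. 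Both terms scale like $\mathrm{d}^{-r-2}$, because $r+2 = p(r+1)$, and this identity is exactly what fixes $r$. Using $\sigma_0\le|U|\le\sigma_1$, the balance $\tfrac12 C r(r+1)\sigma_i^2 = k_i (Cr)^p \sigma_{1-i}^p$ produces the constants $C_0,C_1$ of \eqref{c_0,C_0}. For $p=2$ the same computation with $\ell$ and the bound \eqref{smooth_log_truncation} gives $C=1/(2k)$.

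The perturbation $\pm\varepsilon$ creates a strict margin of order $\varepsilon\,\delta^{-r-2}$. This margin absorbs three error terms:
\begin{itemize}
\item the drift term $\psi'\Xi = O(\delta^{-r-1}(1+|\delta|^m))$;
\item the $\lambda_1|y|$ terms;
\item the constant $\vartheta$.
\end{itemize}
It absorbs them in a layer $\{\delta\le \eta_\varepsilon\}$. Away from the boundary, the additive constants $c_\varepsilon/\lambda_0$ and $-c_\varepsilon/\lambda_1$ together with the monotonicity in $y$ handle the remaining terms. We expect $m\le 1/(p-1)$ to be needed to control $\Xi$ for large $\delta$, together with a truncation of $\psi$ away from zero as in \eqref{regularized_distance}. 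For the upper barrier we shift to $\psi(\delta_t-\eta)$, so that it is a supersolution of ${\rm BSDE}_\infty$ exploding on ${\rm T}^\delta_\eta$ with $\eta_{\rm sup}=\eta$. Letting $\eta\downarrow0$ at the end recovers the bound at level $0$.

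Next we approximate. For each $n$ we solve the bounded quadratic BSDE on the random horizon ${\rm T}^\delta_{1/n}$ with terminal value $n$, or $\min(n,\text{lower barrier})$. We invoke the existing well-posedness theory for random-horizon quadratic BSDEs \cite{QBSDE_random_time}, which applies under \eqrange{f_prog_meas}{f_monotonicity}, together with a truncation of $f$ in $z$ if needed. Lemma \ref{comp_principle}, in case $(\ast)$ for the approximations and case $(\ast\ast)$ against the shifted upper barrier, shows that $Y^n$ is nondecreasing in $n$, bounded above by $(C_1+\varepsilon)\Phi_p(\delta-\eta)+c_\varepsilon/\lambda_0$, and bounded below by the lower barrier. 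The limit $Y=\sup_n Y^n$ is therefore finite before ${\rm T}^\delta$ and tends to $+\infty$ there, by the lower bound in \eqref{bounds_y}.

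On each ${\rm T}^\delta_\eta$ the $Y^n$ are uniformly bounded. Standard quadratic BSDE stability then yields convergence of $Z^n$ in $\mathbb H^2_{{\rm T}^\delta_\eta}$: we use uniform BMO bounds from Itô's formula applied to $e^{cY}$, and the monotone stability argument of Kobylanski. Passing to the limit gives a solution on every ${\rm T}^\delta_\eta$, and consistency in $\eta$ defines $Z$. For minimality, any solution $Y'$ is a supersolution with $\eta_{\rm sup}=0$ and dominates $n$ near ${\rm T}^\delta_{1/n}$. Comparison, Lemma \ref{comp_principle}, gives $Y^n\le Y'$, hence $Y\le Y'$. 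The main obstacle is this step: controlling $Z^n$ uniformly despite the explosion, and making sure the comparison lemma applies with the explosive supersolution.

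Finally we prove (ii). We apply Itô's formula to $g(\delta_t)Y_t^2$, or to $g(\delta_t)\,(Y_t+c)$ composed with an exponential, with weight $g(\mathrm d)=\mathrm d^{2(r+\gamma)}$ localized on $\{\delta\le R\}$, between a stopping time $\kappa$ and $\theta$. The term $-k_1|Z|^p$ in $f$ produces a positive contribution $g|Z|^2$ once combined with the quadratic variation term $|Z|^2$. The cross terms $g'(\delta)Y\,U\cdot Z$ are handled by Young's inequality. By the bounds \eqref{bounds_y}, these reduce to integrals of $\Phi_p(\delta)^2\delta^{2(r+\gamma-1)}$, which are exactly the terms controlled by the BMO assumption \eqref{delta_BMO}. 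Taking conditional expectations then gives $\mathbb E_\kappa\int_\kappa^\theta |Z|^2\delta^{2(r+\gamma)}\mathbb 1_{\{\delta\le R\}}\,dt\le C$ uniformly in $\kappa$. Boundary terms at $\theta$ are handled by applying the argument on $\theta\wedge{\rm T}^\delta_\eta$ and letting $\eta\downarrow 0$ with Fatou's lemma.
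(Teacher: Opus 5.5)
Your barrier construction and your argument for (ii) are essentially the paper's: Itô's formula for $\overline Y^{2}\delta^{2(r+\gamma)}$ with $\overline Y\ge 0$, the $f$-term discarded through $f\le -k_1|z|^p+\lambda_1|y|+\vartheta$, Cauchy--Schwarz on the cross term, then Fatou.

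The gap is in the approximation step of (i). Once you move the horizon to ${\rm T}^\delta_{1/n}$, the terminal value must be calibrated to the explosion rate $\Phi_p(1/n)$, and the value $n$ is not. Take $p=2$; the same occurs for every $p\in(3/2,2]$, where $\Phi_p(1/n)=n^r=o(n)$.

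\textbf{Minimality fails.} Every solution $Y'$ lies below $\overline w^{\varepsilon,\eta}$ on $[0,{\rm T}^\delta_\eta]$, by Lemma \ref{comp_principle} (case $(\ast\ast)$ applied to $\delta-\eta$). Letting $\eta\to0$ gives $Y'_{{\rm T}^\delta_{1/n}}\le (C_1+\varepsilon)\log n+c_\varepsilon/\lambda_0<n$ for large $n$. So your claim that $Y'$ dominates $n$ near ${\rm T}^\delta_{1/n}$ is false. Comparison actually yields $Y'\le Y^n$ on $[0,{\rm T}^\delta_{1/n}]$, so any limit of your scheme dominates every solution. It would be the maximal solution, not the minimal one, and without the concavity of Theorem \ref{uniqueness_thm} these need not coincide.

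\textbf{Monotonicity fails.} Comparing $Y^{n+1}$ with $\overline w^{\varepsilon,1/(n+1)}$ on $[0,{\rm T}^\delta_{1/(n+1)}]$ gives, for large $n$,
\[
Y^{n+1}_{{\rm T}^\delta_{1/n}}\le (C_1+\varepsilon)\log\big(n(n+1)\big)+\frac{c_\varepsilon}{\lambda_0}<n=Y^{n}_{{\rm T}^\delta_{1/n}}\quad\text{on } \{{\rm T}^\delta_{1/n}<\infty\}.
\]
The same holds for $p\in(5/3,2)$, since $\Phi_p(1/(n(n+1)))\sim n^{2r}$ with $2r<1$. So Lemma \ref{comp_principle} cannot make $(Y^n)$ nondecreasing, and the existence of the limit is not justified.

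For $p<3/2$ the picture is reversed ($\Phi_p(1/n)\gg n$). Minimality then holds, but monotonicity no longer follows from a one-line comparison: it needs subsolutions recalibrated to equal $n+1$ at level $1/(n+1)$. Your alternative $\min(n,\cdot)$ can be made to work, but it requires a different minimality argument ($Y'$ dominates the lower barrier, not $n$) and, for $p<3/2$, the same extra monotonicity argument. Neither is in your proposal.

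The paper avoids all of this by not moving the horizon: $(Y^R,Z^R)$ solves ${\rm BSDE}_R({\rm T}^\delta,f)$. Then:
\begin{itemize}
\item $R\mapsto Y^R$ is nondecreasing by case $(\ast)$;
\item $Y^R\le Y'$ for every solution $Y'$ by case $(\ast\ast)$, since $Y'$ explodes exactly at ${\rm T}^\delta$;
\item the upper bound comes from $(\ast\ast)$ against $\overline w^{\varepsilon,\eta}$;
\item for fixed $\eta>0$, the bounded subsolution $\underline w^{\varepsilon,\eta}$ has constant terminal value $(C_0-\varepsilon)\Phi_p(\eta)-c_\varepsilon/\lambda_1$ at ${\rm T}^\delta$, so $Y^R\ge \underline w^{\varepsilon,\eta}$ as soon as $R$ exceeds it.
\end{itemize}

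If you prefer to keep the horizons ${\rm T}^\delta_{1/n}$, take as terminal value exactly $(C_0-\varepsilon_0)\Phi_p(1/n)-c_{\varepsilon_0}/\lambda_1$, the value at ${\rm T}^\delta_{1/n}$ of the unshifted subsolution $\underline w^{\varepsilon_0,0}$. Comparison gives $Y^{n+1}\ge \underline w^{\varepsilon_0,0}$ on $[0,{\rm T}^\delta_{1/(n+1)}]$, hence monotonicity. Every solution dominates $\underline w^{\varepsilon_0,0}$ (compare with $\underline w^{\varepsilon_0,\eta'}$ and let $\eta'\to0$), hence minimality.

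A smaller point: the BMO bound of Lemma \ref{BMO_lemma} on ${\rm T}^\delta_\eta$ involves $\sup_\kappa\|\mathbb{E}_\kappa[{\rm T}^\delta_\eta-\kappa]\|_\infty$, which need not be finite. The stability argument for $Z^n$ must therefore be run on $[0,T\wedge{\rm T}^\delta_\eta]$, as in the paper.
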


The proof is reported in Section \ref{existence_prove_sec}. We recall that $\Phi_p(\delta) \delta^{r} = \mathbbm{1}_{\{ 1 <p <2 \}} - \ell(\delta) \mathbbm{1}_{\{ p=2\}}$. 

\begin{rmk}
    The BMO estimate indicates that $Z$ should formally behave like $\delta^{-(r+1)}$ when $t \to {\rm{T}}^{\delta}$ on $\{ {\rm{T}}^{\delta} < \infty \}$, since $Z \delta^{r + \gamma} = Z \delta^{r+1} \delta^{\gamma-1}$ and $\gamma-1 \leq 0$. We will see in Section \ref{markov_main_results} that the BMO estimate for $Z$ is consistent with the gradient estimate of Theorem \ref{Lasry_Lions_summary}.
\end{rmk}

The next result shows the asymptotic behavior of $Y$ near the singular time in the case $C_0 = C_1$.

\begin{corollary}\label{cor:asymptotics}
    Under the conditions of Theorem \ref{existence_thm}, assume that $\sigma_0 = \sigma_1$ if $p< 2$, and $k_0 = k_1$. Then $Y$ has the following asymptotics near ${\rm{T}}^{\delta}$:
    \begin{equation}\label{asymptotics_y}
    \begin{array}{cc}
        \displaystyle \frac{Y_t}{\Phi_p(\delta_t)} \xrightarrow[t \rightarrow {\rm{T}}^{\delta}]{} C_0 & \mathbb{P}-\text{a.s. \hspace{-7pt} on } \{ {\rm{T}}^{\delta} < \infty \} \, .
    \end{array}
    \end{equation}
\end{corollary}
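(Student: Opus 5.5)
The plan is to obtain the corollary directly from the two-sided bound \eqref{bounds_y} of Theorem \ref{existence_thm}(i), after checking that the hypotheses force $C_0 = C_1$.

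\textbf{Step 1: the two constants coincide.} From \eqref{c_0,C_0}:
\begin{itemize}
\item for $p=2$, $C_i = 1/(2k_i)$, so $k_0 = k_1$ gives $C_0 = C_1$;
\item for $1<p<2$, write $\sigma \coloneqq \sigma_0 = \sigma_1$ and $k \coloneqq k_0 = k_1$. The ratio $\sigma_i^2/(2k_i\sigma_{1-i}^p)$ then equals $\sigma^{2-p}/(2k)$ for both $i$, so again $C_0 = C_1$.
\end{itemize}

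\textbf{Step 2: pass to the ratio.} Fix $\varepsilon>0$ and work on $\{{\rm{T}}^{\delta}<\infty\}$. Since $\delta$ is continuous and $\delta_{{\rm{T}}^{\delta}} = 0$, we have $\delta_t \to 0^+$ as $t \to {\rm{T}}^{\delta}$, with $\delta_t>0$ for $t<{\rm{T}}^{\delta}$. Hence $\Phi_p(\delta_t) \to +\infty$:
\begin{itemize}
\item for $p<2$, $\Phi_p(\delta_t) = \delta_t^{-r}$ with $r>0$;
\item for $p=2$, $\Phi_p(\delta_t) = -\log\delta_t$ once $\delta_t<1/2$.
\end{itemize}
In particular $\Phi_p(\delta_t)>0$ for $t$ close to ${\rm{T}}^{\delta}$, so we may divide \eqref{bounds_y} by it:
\[
C_0 - \varepsilon - \frac{c_\varepsilon}{\lambda_1\Phi_p(\delta_t)} \;\le\; \frac{Y_t}{\Phi_p(\delta_t)} \;\le\; C_0 + \varepsilon + \frac{c_\varepsilon}{\lambda_0\Phi_p(\delta_t)} .
\]
Here the constant $c_\varepsilon$ is deterministic and depends only on $p,\vartheta,k_0,k_1,\sigma_0,\sigma_1,\varepsilon$.

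\textbf{Step 3: conclude.} Letting $t\to{\rm{T}}^{\delta}$ kills the $c_\varepsilon/\Phi_p$ terms, so the $\liminf$ and $\limsup$ of the ratio both lie in $[C_0-\varepsilon, C_0+\varepsilon]$. Take $\varepsilon = 1/n$ and intersect the countably many full-measure events on which \eqref{bounds_y} holds for all $t<{\rm{T}}^{\delta}$. This yields \eqref{asymptotics_y} $\mathbb{P}$-a.s. on $\{{\rm{T}}^{\delta}<\infty\}$.

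\textbf{Main obstacle.} The argument is essentially immediate, so the only point needing care is the null-set bookkeeping. One must confirm that \eqref{bounds_y} holds almost surely simultaneously for all $t<{\rm{T}}^{\delta}$, and not merely $d\mathbb{P}\otimes dt$-a.e. This is clear once one notes that both sides of \eqref{bounds_y} are continuous in $t$ on $[0,{\rm{T}}^{\delta})$, so an a.e.-in-$t$ bound upgrades to an every-$t$ bound. The countable intersection over $\varepsilon=1/n$ in Step 3 then causes no difficulty.
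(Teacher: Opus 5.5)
Your proposal is correct and follows the same route as the paper: check that the hypotheses force $C_0 = C_1$, divide the two-sided bound \eqref{bounds_y} by $\Phi_p(\delta_t)$, let $t \to {\rm{T}}^{\delta}$, and then send $\varepsilon \to 0$. You are in fact slightly more careful than the written proof, which divides ``for $t \ge 0$''. You restrict to $t$ close to ${\rm{T}}^{\delta}$, where $\Phi_p(\delta_t) > 0$, and this matters for $p=2$ because $\ell(\delta) = 0$ when $\delta \geq 1$.
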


\begin{proof}
    Let $\varepsilon > 0$. Then, following the estimates \eqref{bounds_y} and the fact that $C_0 = C_1$,
    \begin{equation*}
    \begin{array}{cc}
         \displaystyle (C_0 - \varepsilon) - \frac{c}{\lambda_1 \Phi_p(\delta_t)} \leq \frac{Y_t}{\Phi_p(\delta_t)} \leq (C_0 + \varepsilon) + \frac{c}{\lambda_0 \Phi_p(\delta_t)}  \, , & t \ge 0 \, ,
    \end{array}
    \end{equation*}
    so $\displaystyle C_0 - \varepsilon \leq \liminf_{t \rightarrow {\rm{T}}^{\delta}} \frac{Y_t}{\Phi_p(\delta_t)} \leq \limsup_{t \rightarrow {\rm{T}}^{\delta}} \frac{Y_t}{\Phi_p(\delta_t)} \leq C_0 + \varepsilon$ on $\{ {\rm{T}}^{\delta} < \infty \}$. We conclude by letting $\varepsilon \rightarrow 0$.
\end{proof}

We finally obtain the uniqueness of solutions of ${\rm{BSDE}}_{\infty}({\rm{T}}^{\delta}, f)$, even when $C_0 \neq C_1$, under an additional structural condition on the driver $f$.

\begin{theorem}[Uniqueness] \label{uniqueness_thm}
    Let Assumptions {\rm{\ref{d_assumptions}}} and {\rm{\ref{driver_assumptions}}} hold, and suppose that $z \in \mathbb{R}^d \mapsto f_t(y, z)$ is concave for all $y \in \mathbb{R}$, $d\mathbb{P} \otimes dt-$a.e. Then there is a unique solution of ${\rm{BSDE}}_{\infty}({\rm{T}}^{\delta}, f)$ in the sense of Definition \ref{singular_def}.
\end{theorem}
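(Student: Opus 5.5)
Existence of a minimal solution $(Y,Z)$ is given by Theorem \ref{existence_thm}. So it suffices to show that any other solution $(Y',Z')$ of ${\rm{BSDE}}_{\infty}({\rm{T}}^{\delta}, f)$ satisfies $Y' \leq Y$; minimality gives the reverse inequality. We cannot compare $Y'$ and $Y$ directly through Lemma \ref{comp_principle}, since both are infinite at ${\rm{T}}^{\delta}$. Instead we use the convexity trick classical for concave (Hamiltonian) generators, in the spirit of Lasry--Lions: for $\mu \in (0,1)$ we compare $Y'$ with a convex combination of $Y$ and a suitable supersolution.

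\textbf{Step 1: the combination.} For $\mu\in(0,1)$ set
\[
\widetilde Y \coloneqq \mu Y + (1-\mu) \Psi, \qquad \widetilde Z \coloneqq \mu Z + (1-\mu)\Psi^Z,
\]
where $(\Psi,\Psi^Z)$ is an explicit supersolution. We take $\Psi = A\,\Phi_p(\delta)+B$, or a large multiple $\Psi = M\,Y$ itself. By concavity of $z\mapsto f_t(y,z)$ and the monotonicity \eqref{f_monotonicity} in $y$,
\[
f_t(\widetilde Y,\widetilde Z) \geq \mu f_t(Y,Z) + (1-\mu) f_t(\Psi,\Psi^Z) - C(1-\mu)\,(\text{error}).
\]
The error term comes from the $y$-dependence, which is controlled by $\lambda_1|y|$ and $\vartheta$. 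Hence $\widetilde Y$ is a supersolution up to a correction of order $(1-\mu)$. The cleaner variant is $\widetilde Y = \mu^{-1}$-scaling. By \eqref{p_growth}, dilating the solution by a factor $\mu^{-1}>1$ makes $-k|z|^p$ strictly more negative, since $\mu^{-p}>\mu^{-1}$. This yields a \emph{strict} supersolution whose terminal value at ${\rm{T}}^{\delta}_\eta$ already dominates $Y'$.

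\textbf{Step 2: localization and comparison.} Fix $\eta>0$. On $[0,{\rm{T}}^{\delta}_\eta]$ both processes are bounded, with $Z$-integrands in BMO by Theorem \ref{existence_thm}(ii) together with the standard BMO property of bounded quadratic solutions. We need $Y'_{{\rm{T}}^{\delta}_\eta} \le \widetilde Y_{{\rm{T}}^{\delta}_\eta}$ asymptotically as $\eta\to0$. The two-sided bounds \eqref{bounds_y} apply to the minimal solution. For $Y'$ we obtain the analogous upper bound $Y'\le (C_1+\varepsilon)\Phi_p(\delta)+c$ by comparing $Y'$ with the barrier supersolution used in the proof of Theorem \ref{existence_thm}; this is valid under $(\ast\ast)$ of Lemma \ref{comp_principle}, since the barrier explodes at a level $\eta_{\rm sup}$ slightly below $0$.

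If $C_0=C_1$, the ratio $Y'/Y\to1$, and Lemma \ref{comp_principle} applied to $Y'$ and $\mu^{-1}Y+c_\mu$ on $[0,{\rm{T}}^{\delta}_\eta]$ gives $Y'\le \mu^{-1}Y + c_\mu$. Letting $\eta\to0$ and then $\mu\to1$, with $c_\mu\to0$, we conclude.

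When $C_0\neq C_1$, the ratio is only bounded: $Y'\le K\,Y + c$ with $K=(C_1+\varepsilon)/(C_0-\varepsilon)$. We then iterate. Using concavity, $(Y' - \mu Y)/(1-\mu)$ is a subsolution of a BSDE with generator $\widehat f$ of the same type, with $\widehat f \le f$ modulo $(y,\lambda)$ corrections. Comparing this subsolution with $Y$ and letting $\mu\uparrow1$ improves the constant $K$ towards $1$. Equivalently, the function $\mu\mapsto\sup(Y'-\mu^{-1}Y)$ is shown to be nonpositive by a contraction-type argument using the discount $\lambda_0>0$ from \eqref{f_monotonicity}.

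\textbf{Main obstacle.} The hard part is exactly this mismatch $C_0\neq C_1$. Concavity only gives a sub/super inequality for convex combinations, and we must ensure the boundary ordering at ${\rm{T}}^{\delta}_\eta$ without matching asymptotics. The approach I would try first has two parts. First, use $(Y'-\mu Y)/(1-\mu)$ as a subsolution with terminal value bounded above by $(K-\mu)/(1-\mu)\cdot Y$ near the boundary. Second, exploit that the strict dilation gain, $\mu^{1-p}>1$ in the $|z|^p$ term, absorbs this. The $\lambda_0$-discount then handles the remaining bounded errors on the infinite horizon via Lemma \ref{comp_principle}. The Girsanov/BMO justification on each $[0,{\rm{T}}^{\delta}_\eta]$ is routine given Theorem \ref{existence_thm}(ii).
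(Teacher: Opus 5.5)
Several parts of your plan are workable:
\begin{itemize}
\item reducing to $Y'\le Y$ for an arbitrary solution $Y'$, rather than building the maximal solution $Y^{\max}$ as the paper does;
\item the upper bound $Y'\le (C_1+\varepsilon)\Phi_p(\delta)+c$, obtained by comparing with $\overline{w}^{\varepsilon,\eta}$ on $[0,{\rm T}^{\delta}_\eta]$ (this barrier explodes at the \emph{positive} level $\eta$, i.e.\ before ${\rm T}^{\delta}$, not below $0$);
\item your treatment of the case $C_0=C_1$.
\end{itemize}
In that last case, however, $\mu^{-1}Y+c_\mu$ is a supersolution because of concavity (write $Y=\mu(\mu^{-1}Y)+(1-\mu)\cdot 0$), not because of \eqref{p_growth}. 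With $k_0>k_1$, the growth bounds fail to give $k_1\mu^{-p}\ge k_0\mu^{-1}$ as $\mu\to1$.

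The genuine gap is the case $C_0\neq C_1$, which is exactly what the theorem adds to Corollary~\ref{cor:asymptotics}. There your tools point the wrong way.
\begin{itemize}
\item For concave $f$, convex combinations of \emph{sub}solutions are subsolutions, since $d\widetilde K\le\mu\,dK^1+(1-\mu)\,dK^2$. Hence $\mu Y+(1-\mu)\Psi$ with $\Psi$ a supersolution is not shown to be a supersolution.
\item Writing $Y'=\mu Y+(1-\mu)V$, concavity gives $f(Y',Z')\ge\mu f(Y,Z)+(1-\mu)f(V,Z^V)$. This makes $V=(Y'-\mu Y)/(1-\mu)$ a \emph{super}solution, not a subsolution.
\item Since $V\ge Y$ and you only know $V\lesssim\frac{K-\mu}{1-\mu}Y$ near ${\rm T}^{\delta}$, with $\frac{K-\mu}{1-\mu}>1$, $V$ could not be compared with $Y$ from above anyway.
\item The discount $\lambda_0$ only makes Lemma~\ref{comp_principle} work on the infinite horizon. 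It cannot repair a mismatch of explosion constants at ${\rm T}^{\delta}$.
\item The $y$-``error'' you allow in the concavity inequality is of order $(1-\mu)|Y|$, which is unbounded near ${\rm T}^{\delta}$. The combination inequality must hold with no such error, as it does for $f=-H(z)-\lambda y$ and as the paper uses.
\end{itemize}

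The missing idea is a bootstrap on the largest admissible convex weight, using a subsolution that stays \emph{bounded} at ${\rm T}^{\delta}$.
\begin{itemize}
\item Let $\theta_0=\sup\{\theta\in(0,1]:\theta Y'\le Y+(1-\theta)(1+c)\}$. It is positive by the two-sided bounds; suppose $\theta_0<1$.
\item Take $y^\eta=\tilde C_0\Phi_p(\delta+\eta)-c$ with $\tilde C_0<C_0$, the subsolution of Proposition~\ref{lasry_lions_estimates} that explodes only after ${\rm T}^{\delta}$. For $\theta<\theta_0$, the process $\theta Y'+(1-\theta)y^\eta$ is a subsolution by concavity.
\item It lies below $Y$ at ${\rm T}^{\delta}_{\eta'}$ for small $\eta'$, because $(\theta_0-\theta)Y'\to\infty$ while $y^\eta$ stays bounded. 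Lemma~\ref{comp_principle} then gives $\theta Y'+(1-\theta)y^\eta\le Y$.
\item Since $\tilde C_0>0$ and $Y'\le\tilde C_1\Phi_p(\delta)+c$, there is $\nu\in(0,1)$ with $\nu(Y'+1+c)\le\tilde C_0\Phi_p(\delta)+1$.
\item Letting $\theta\to\theta_0$ and $\eta\to0$ shows that $\theta_0+(1-\theta_0)\nu$ is admissible, a contradiction.
\end{itemize}
The boundedness of $y^\eta$ at ${\rm T}^{\delta}$ supplies the boundary margin. Its comparability with $Y'$ in the interior supplies the strict improvement of the weight. Your ``iteration'' and ``contraction'' have neither ingredient, so as written they do not prove $Y'\le Y$.
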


The proof is reported in Section \ref{uniqueness_control_sec}. Note that the concavity of the driver $f$ is natural from the perspective of optimal control and should be compared with the uniqueness regime in the context of quadratic backward SDEs, see \citeauthor{briand_hu2} \cite{briand_hu2}.

\subsection{Connection with optimal control}

In this section, we extend \citeauthor{Lasry_Lions_state_constraints} \cite[Theorem~VII.1]{Lasry_Lions_state_constraints} to the path-dependent setting. We assume that the generator is of the form:
\begin{equation*}
    \begin{array}{cc}
         \displaystyle f_t(y, z) = - H_t(z) - \lambda y \, , & \text{and define } \displaystyle \, g_t(a) \coloneqq \sup_{z \in \mathbb{R}^d} \, \{ a \cdot z - H_t(z) \}  \, .
    \end{array}
\end{equation*}
We show that the minimal solution $(Y, Z)$ of the BSDE \eqref{BSDE_LL} constructed in Theorem \ref{existence_thm} can be used to represent the value function of a control problem with the cost:
    \begin{equation}\label{J(alpha)}
         J(\alpha) \coloneqq \mathbb{E}^{\mathbb{P}^{\alpha}} \int_0^{\infty} e^{-\lambda t}  g_t(\alpha_t) dt \, ,
    \end{equation}
for all admissible control processes $\alpha \in \mathcal{A}$, as defined below. We adopt the weak formulation for the stochastic optimal control problem. Since we only consider drift-controlled processes, we fix the space $(\Omega, \mathcal{F})$ and the filtration $\mathbb{F}$, and work with Girsanov transformations.

\begin{definition}\label{A_def}
    Let $\mathcal{A}$ be the set of $\mathbb{F}$-progressively measurable processes $\alpha \in \bigcap_{\eta > 0} \mathbb{H}^{2, {\rm{loc}}}_{{\rm{T}}^{\delta}_{\eta}}$ for which there exists a probability distribution $\mathbb{P}^{\alpha}$ on $(\Omega, \mathcal{F})$ satisfying:
    \begin{equation*}
    \begin{array}{ccc}
        \displaystyle \frac{d \mathbb{P}^{\alpha}}{d \mathbb{P}} \Big|_{\mathcal{F}_{T \wedge {\rm{T}}^{\delta}_{\eta}}} \! \! \!= \mathcal{E} \Big( - \int_0^{\cdot} \alpha_s \cdot dW_s \Big)_{T \wedge {\rm{T}}^{\delta}_{\eta}} \, \text{ for all } T , \eta > 0 \,, & \text{and} & \mathbb{P}^{\alpha}[{\rm{T}}^{\delta} = \infty] = 1 \, .
    \end{array}
    \end{equation*}
\end{definition}

\begin{rmk}
    No absolute continuity between $\mathbb{P}^{\alpha}$ and $\mathbb{P}$ is imposed at ${\rm{T}}^{\delta}$ or on the infinite horizon: the change of measure is only locally equivalent before the levels ${\rm{T}}^{\delta}_{\eta}$. This is essential since an admissible measure satisfies $\mathbb{P}^{\alpha}[{\rm{T}}^{\delta} = \infty] = 1$, and ${\rm{T}}^{\delta}$ may be finite with positive probability under $\mathbb{P}$.
\end{rmk}

\begin{rmk}
    The probability measure $\mathbb{P}^{\alpha}$ is uniquely defined for every $\alpha \in \mathcal{A}$. Indeed, if $\mathbb{Q}^{\alpha}$ is another probability measure satisfying the conditions of Definition \ref{A_def}, then, if $T>0$ and $A \in \mathcal{F}_T$,
    \begin{equation*}
        \mathbb{P}^{\alpha} \big[ A \cap \{{\rm{T}}^{\delta}_{\eta} > T \} \big]  = \mathbb{Q}^{\alpha} \big[ A \cap \{{\rm{T}}^{\delta}_{\eta} > T \} \big] \, .
    \end{equation*}
    Taking the limit $\eta \to 0$, we conclude that $\mathbb{P}^{\alpha} = \mathbb{Q}^{\alpha}$ on $\mathcal{F}_T$, which then implies $\mathbb{P}^{\alpha} = \mathbb{Q}^{\alpha}$ on $\mathcal{F}_{\infty}$.
\end{rmk}

For the verification argument of Theorem \ref{verif_arg} below, we make some additional assumptions on the Hamiltonian $H$ to ensure that the cost $g_t$ is continuously differentiable and strongly convex.

\begin{assumption}\label{control_assumption}
    \setcounter{assumptiontag}{0}
    
    \begin{itemize}
        \item[]

        \refstepcounter{assumptiontag}
        \item[] \hspace{-30pt} (\theassumptiontag) \label{g_C^1} \hspace{1pt} The function $z \mapsto H_t(z)$ is \textit{convex} and \textit{continuously differentiable} ($C^1$).

        \refstepcounter{assumptiontag}
        \item[] \hspace{-30pt} (\theassumptiontag) \label{g_strong_convex} \hspace{1pt} $H$ is \textit{$p$-smooth} in the sense that there exists $\nu > 0$ such that:
        \begin{equation*}
            \begin{array}{cc}
                 \displaystyle H_t(z') \leq H_t(z) + \nabla H_t(z) \cdot (z'-z) + \nu |z'-z|^p \, , & \text{for all } \, z, z' \in \mathbb{R}^d \, , \; d\mathbb{P} \otimes dt- \text{a.e.}
            \end{array}
        \end{equation*} 
    \end{itemize}
\end{assumption}

By convex duality, the $p$-smoothness of $H_t$ implies the $p'$-uniform convexity of its conjugate $g_t$, where $p' = \frac{p}{p-1}$.

\begin{theorem}[Verification Argument]\label{verif_arg}
    Under Assumptions {\rm{\ref{d_assumptions}}}, {\rm{\ref{driver_assumptions}}} and {\rm{\ref{control_assumption}}}, let $(Y,Z)$ be the minimal solution of\hspace{.5pt} ${\rm{BSDE}}_{\infty}({\rm{T}}^{\delta}, f)$. Then the process defined by $\hat{\alpha} \coloneqq \nabla_{\! z} H(Z)$ satisfies:
    \begin{equation*}
        \begin{array}{ccc}
             \hat{\alpha} \in \mathcal{A} & \text{and} & \displaystyle Y_0 = J(\hat{\alpha}) =  \inf_{\alpha \in \mathcal{A}} J(\alpha) \, .
        \end{array}
    \end{equation*}
    Moreover, if $\alpha'$ is another optimal control, then $\mathbb{P}^{\alpha'} = \mathbb{P}^{\hat{\alpha}}$ and $\alpha' = \hat{\alpha}$, $d\mathbb{P} \otimes dt-$a.e. on $[0, {\rm{T}}^{\delta})$.
\end{theorem}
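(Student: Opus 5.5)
Our plan is the classical verification argument, localized at the levels ${\rm{T}}^{\delta}_{\eta}$ and the deterministic times $T$, followed by the limits $\eta\to 0$ and $T\to\infty$. Since $H_t$ is convex and $C^1$, the Fenchel relation gives $g_t(a)\ge a\cdot z-H_t(z)$ for all $a,z$, with equality if and only if $a=\nabla H_t(z)$. Fix $\alpha\in\mathcal{A}$ and set $\theta=T\wedge{\rm{T}}^{\delta}_{\eta}$. Under $\mathbb{P}^{\alpha}$, the process $W^{\alpha}=W+\int_0^{\cdot}\alpha_s\,ds$ is a Brownian motion on $[0,\theta]$. Itô's formula applied to $e^{-\lambda t}Y_t$ then gives
\[
Y_0=e^{-\lambda\theta}Y_\theta+\int_0^\theta e^{-\lambda s}\big(\alpha_s\cdot Z_s-H_s(Z_s)\big)ds-\int_0^\theta e^{-\lambda s}Z_s\cdot dW^\alpha_s .
\]
Because $Y$ is bounded on $[0,{\rm{T}}^{\delta}_\eta]$ and $Z\in\mathbb{H}^{2,{\rm loc}}$, the stochastic integral should be a true $\mathbb{P}^\alpha$-martingale. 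We intend to justify this via the BMO property of $Z$ on $[0,{\rm{T}}^\delta_\eta]$, which holds since $Y$ is bounded there (standard for quadratic BSDEs). If that route is delicate, we will instead localize further and use Fatou's lemma. Taking expectations yields
\[
Y_0\le \mathbb{E}^{\mathbb{P}^\alpha}\Big[e^{-\lambda\theta}Y_\theta+\int_0^\theta e^{-\lambda s}g_s(\alpha_s)\,ds\Big],
\]
with equality when $\alpha=\hat\alpha$.

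Next we pass to the limit in this inequality. Since $\mathbb{P}^\alpha[{\rm{T}}^\delta=\infty]=1$ and paths are continuous, ${\rm{T}}^\delta_\eta\to\infty$ $\mathbb{P}^\alpha$-a.s. as $\eta\to0$, so $\theta\to T$. For the running cost we use that $g$ is bounded below: Assumption \eqref{p_growth} gives $H_t(z)\le k_0|z|^p+\vartheta$, hence $g_t\ge -\vartheta$, and monotone convergence applies. For the terminal term, the upper bound \eqref{bounds_y} gives
\[
e^{-\lambda\theta}Y_\theta\le e^{-\lambda\theta}\big((C_1+\varepsilon)\Phi_p(\delta_\theta)+c\big).
\]
This tends to $0$ as $T\to\infty$ provided $\mathbb{E}^{\mathbb{P}^\alpha}[e^{-\lambda T}\Phi_p(\delta_T)]\to0$, which is the delicate point. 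For the inequality direction, however, we only need $\liminf\mathbb{E}[e^{-\lambda\theta}Y_\theta]\le 0$ along some sequence. If $J(\alpha)<\infty$, we expect to control $\Phi_p(\delta)$ through the cost: Itô's formula for $\Phi_p(\delta_t)$ under $\mathbb{P}^\alpha$ together with Young's inequality $|\alpha\cdot U\Phi_p'|\le \epsilon|\alpha|^{p'}+C|\Phi_p'|^p$ should produce a bound of the form $\mathbb{E}^{\mathbb{P}^\alpha}\int_0^\infty e^{-\lambda t}\Phi_p(\delta_t)\,dt<\infty$. The key exponent computation is $(r+1)p=r+2$, so the Young remainder is comparable to $\Phi_p''$ and can be absorbed. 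Integrability of $e^{-\lambda t}\Phi_p(\delta_t)$ gives $\liminf_T \mathbb{E}[e^{-\lambda T}\Phi_p(\delta_T)]=0$. This yields $Y_0\le J(\alpha)$.

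For $\hat\alpha$, the main obstacle is admissibility: we must construct $\mathbb{P}^{\hat\alpha}$ and show $\mathbb{P}^{\hat\alpha}[{\rm{T}}^\delta=\infty]=1$. We plan to define $\mathbb{P}^{\hat\alpha}$ consistently on $\mathcal{F}_{T\wedge{\rm{T}}^\delta_\eta}$, using that $\hat\alpha=\nabla H(Z)$ has $|\hat\alpha|\lesssim |Z|^{p-1}$, which is BMO there, so Kazamaki's criterion gives a true martingale density. We then extend the measure by a Föllmer-type construction. To show no mass is lost, we use the equality case. Under $\mathbb{P}^{\hat\alpha}$, $e^{-\lambda t}Y_t+\int_0^te^{-\lambda s}g_s(\hat\alpha_s)ds$ is a martingale up to ${\rm{T}}^\delta_\eta$, so
\[
Y_0\ge \mathbb{E}^{\mathbb{P}^{\hat\alpha}}\big[e^{-\lambda T}Y_{{\rm{T}}^\delta_\eta}\mathbf 1_{\{{\rm{T}}^\delta_\eta\le T\}}\big]-\vartheta/\lambda .
\]
By the lower bound in \eqref{bounds_y}, $Y_{{\rm{T}}^\delta_\eta}\ge (C_0-\varepsilon)\Phi_p(\eta)-c\to\infty$ as $\eta\to0$. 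Hence $\mathbb{P}^{\hat\alpha}[{\rm{T}}^\delta_\eta\le T]\to0$, giving $\mathbb{P}^{\hat\alpha}[{\rm{T}}^\delta\le T]=0$ for all $T$. The same identity, with $Y\ge\overline m/\lambda_0$ from Remark \ref{bounded_below_rmk}, gives $J(\hat\alpha)\le Y_0$, and therefore $J(\hat\alpha)=Y_0$.

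Finally, for uniqueness of the optimal law, suppose $J(\alpha')=Y_0$. Then the gap $\mathbb{E}^{\mathbb{P}^{\alpha'}}\int e^{-\lambda s}\big(g_s(\alpha'_s)-\alpha'_s\cdot Z_s+H_s(Z_s)\big)ds$ must vanish. Indeed, the terminal term has nonpositive $\liminf$ and $Y$ is bounded below, so it is nonnegative in the limit. The $p'$-uniform convexity of $g$ gives a lower bound $c|\alpha'-\hat\alpha|^{p'}$ for the integrand. Hence $\alpha'=\hat\alpha$, $\mathbb{P}^{\alpha'}\otimes dt$-a.e. By local equivalence with $\mathbb{P}$ before ${\rm{T}}^\delta_\eta$, this holds $d\mathbb{P}\otimes dt$-a.e. on $[0,{\rm{T}}^\delta)$. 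The densities on $\mathcal{F}_{T\wedge{\rm{T}}^\delta_\eta}$ then coincide, and the uniqueness remark after Definition \ref{A_def} gives $\mathbb{P}^{\alpha'}=\mathbb{P}^{\hat\alpha}$.
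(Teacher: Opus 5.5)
There is a genuine gap in the inequality $Y_0\le J(\alpha)$ for arbitrary $\alpha\in\mathcal{A}$, which is the step you flag as delicate. Everything downstream rests on it: $Y_0=J(\hat\alpha)$, optimality, and your uniqueness argument, which needs $\liminf\mathbb{E}^{\mathbb{P}^{\alpha'}}[e^{-\lambda\theta}Y_\theta]\le0$ for the competitor $\alpha'$.

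The proposed Itô--Young bound has the wrong sign. Under $\mathbb{P}^\alpha$,
\[
\begin{aligned}
\lambda\,\mathbb{E}^{\mathbb{P}^\alpha}\!\int_0^\tau e^{-\lambda t}\Phi_p(\delta_t)\,dt+\mathbb{E}^{\mathbb{P}^\alpha}\big[e^{-\lambda\tau}\Phi_p(\delta_\tau)\big]
&=\Phi_p(\delta_0)\\
&\quad+\mathbb{E}^{\mathbb{P}^\alpha}\!\int_0^\tau e^{-\lambda t}\Big[\tfrac12\Phi_p''(\delta_t)|U_t|^2+\Phi_p'(\delta_t)\big(\Xi_t-U_t\cdot\alpha_t\big)\Big]dt .
\end{aligned}
\]
The term $\tfrac12\Phi_p''|U|^2\gtrsim\delta^{-(r+2)}$ is positive and sits on the same side as the Young remainder $C\delta^{-(r+2)}$, so nothing is absorbed. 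In fact no such one-shot estimate can work. Consider a bound
$\lambda\mathbb{E}^{\mathbb{P}^\alpha}\int_0^\tau e^{-\lambda t}\Phi_p(\delta_t)dt\le C\big(1+\mathbb{E}^{\mathbb{P}^\alpha}\int_0^\tau e^{-\lambda t}|\alpha_t|^{p'}dt\big)$
holding for all $\alpha$ (admissible or not) and all $\tau\le{\rm{T}}^\delta_\eta$, which is what your computation would produce. It is false when $r\ge2$. Take $d=1$, $\delta=\delta_0+W$, $\alpha\equiv0$, $\tau={\rm{T}}^\delta_\eta$ and let $\eta\to0$. The left side blows up, because the Green function of killed Brownian motion vanishes only linearly at $0$, while the cost stays bounded. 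So the constraint $\mathbb{P}^\alpha[{\rm{T}}^\delta=\infty]=1$ would have to enter globally.

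Even granting $\mathbb{E}^{\mathbb{P}^\alpha}\int_0^\infty e^{-\lambda t}\Phi_p(\delta_t)dt<\infty$, you still could not remove the localization. On $\{{\rm{T}}^\delta_\eta\le T\}$ the terminal value satisfies $Y_{{\rm{T}}^\delta_\eta}\ge(C_0-\varepsilon)\Phi_p(\eta)-c$. You therefore need $\Phi_p(\eta)\,\mathbb{P}^\alpha[{\rm{T}}^\delta_\eta\le T]\to0$. This is a uniform-integrability statement that a time-integrated bound does not provide, and Fatou runs the wrong way for an upper bound on $Y_0$.

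The missing idea is to use that $Y$ is the \emph{minimal} solution: $Y=\lim_{R\to\infty}Y^R$, with $(Y^R,Z^R)$ the bounded solution of ${\rm{BSDE}}_R({\rm{T}}^\delta,f)$. Run your verification inequality on $Y^R$ instead of $Y$. The terminal term is at most $\|Y^R\|_\infty e^{-\lambda\theta}$ and vanishes by dominated convergence, since $T\wedge{\rm{T}}^\delta_\eta\to\infty$ $\mathbb{P}^\alpha$-a.s. This gives $Y^R_0\le J(\alpha)$ with no estimate on $\Phi_p(\delta)$ under $\mathbb{P}^\alpha$; then let $R\to\infty$. Your route uses only the upper bound in \eqref{bounds_y}, not minimality. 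It would therefore apply verbatim to any solution obeying that bound and implicitly prove a uniqueness statement, a sign that the missing estimate is not routine. The paper's argument neither needs nor implies Theorem \ref{uniqueness_thm}.

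Your admissibility argument for $\hat\alpha$ is essentially the paper's Step 1: BMO before $T\wedge{\rm{T}}^\delta_\eta$, consistent extension of the measures, and the lower bound on $Y_{{\rm{T}}^\delta_\eta}$ forcing the hitting probabilities to vanish.

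Once the $Y^R$ device is in place, your uniqueness argument also becomes a valid alternative to the paper's strict convexity of $J$ on the convex set of laws $\mathcal{P}_{\!\mathcal{A}}$. The identity for $Y^R$ under $\mathbb{P}^{\alpha'}$ gives $\mathbb{E}^{\mathbb{P}^{\alpha'}}\!\int_0^\infty e^{-\lambda s}\big(g_s(\alpha'_s)-\alpha'_s\cdot Z^R_s+H_s(Z^R_s)\big)ds=Y_0-Y^R_0\to0$. The integrand dominates $c\,|\alpha'_s-\nabla H_s(Z^R_s)|^{p'}$. Finally, $Z^R\to Z$ in $\mathbb{H}^2_{T\wedge{\rm{T}}^\delta_\eta}$, hence in $\mathbb{P}^{\alpha'}\otimes dt$-measure there, which yields $\alpha'=\hat\alpha$.
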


We prove this result in Section \ref{Control_sec}. The delicate part is to show that $\hat{\alpha}$ is an admissible process, and especially that it satisfies $ \mathbb{P}^{\hat{\alpha}}[{\rm{T}}^{\delta} = \infty] = 1$. The uniqueness of the optimal constrained law relies on the strict convexity of the criterion $J(\alpha)$ with respect to $\mathbb{P}^{\alpha}$. Note that the proof leverages the specific construction of the minimal solution $(Y, Z)$, and in particular does not require nor imply the uniqueness result of Theorem \ref{uniqueness_thm}.

\begin{example}\label{ex:control}
    If $f_t(y,z) = -\frac{1}{p}|z|^p + h(W_t) - \lambda y$, then $g_t(\alpha) = \frac{1}{p'} |\alpha_t|^{p'} + h(W_t)$ and we recover the framework of \citeauthor{Lasry_Lions_state_constraints} \cite{Lasry_Lions_state_constraints}, but with a possibly non-Markov constraint. In this case, Assumptions \ref{d_assumptions}, \ref{driver_assumptions}, \ref{control_assumption} hold if $h$ is bounded, and $\hat{\alpha}_t = \nabla_z H_t(Z_t) =  |Z_t|^{p-2} Z_t$. 
\end{example}

\subsection{The case of Markov exit time}\label{markov_main_results}

In this section, we connect our main results to the existing literature on Equation \eqref{HJB_singular}, when the state process is Markovian. The proofs are reported in Section \ref{Applications_sec}. Consider the stochastic differential equation:

\vspace{-15pt}

\begin{equation}\label{SDE_state}
\begin{array}{cc}
     \displaystyle dX_t = b(X_t) \, dt + \Sigma(X_t) \, dW_t \, , & X_0 = x \in D \, ,
\end{array}
\end{equation}
where the coefficients $b : \mathbb{R}^d \rightarrow \mathbb{R}^d$, $\Sigma : \mathbb{R}^d \rightarrow \mathbb{R}^{d \times d}$ and the domain $D$ satisfy Assumption \ref{state_process_assumptions} below. 

\begin{assumption} \label{state_process_assumptions}
\setcounter{assumptiontag}{0}

    \begin{itemize}
        \item[]

    \refstepcounter{assumptiontag}
        \item[] \hspace{-30pt} (\theassumptiontag) \label{D_condition} \hspace{1pt} $D$ is an open subset of $\mathbb{R}^d$ and there exists $\overline{\eta} > 0$ such that $d_{\pm} \in C^2(D^{\overline{\eta}})$.

        \refstepcounter{assumptiontag}
        \item[] \hspace{-30pt} (\theassumptiontag) \label{Lpz_condition_b_sigma} \hspace{1pt} There exists a constant $L > 0$ such that:
        \begin{equation*}
            \begin{array}{cc}
                 \displaystyle |b(x) - b(x')| + | \Sigma(x) - \Sigma(x') | \leq L |x - x'| \, , & \text{for all } x, x' \in D \, .
            \end{array}
        \end{equation*}

        \refstepcounter{assumptiontag}
        \item[] \hspace{-30pt} (\theassumptiontag) \label{uniform_ellipticity} \hspace{.5pt} There exist $0 < \sigma_0 \leq \sigma_1 $ such that:
        \begin{equation*}
        \begin{array}{cc}
            \displaystyle \sigma_0^2 |\xi|^2 \leq \xi^{\intercal} \, \Sigma \Sigma^{\intercal}(x) \,\xi \leq \sigma_1^2 |\xi|^2 \, , & \text{for all } x, \xi \in D.
        \end{array} 
        \end{equation*}
        
    \end{itemize}
\end{assumption}

Under Assumption \eqref{Lpz_condition_b_sigma}, there exists a unique strong $\mathbb{F}$-adapted Markov process $X$ solving \eqref{SDE_state}. We apply our framework to the case:
\begin{equation*}
    \rm{T}^{\delta} = \inf \{ t \geq 0 : X_t \notin D \} = \inf \{ t \ge 0 : \mathbf{d}(X_t) \leq 0 \} \, ,
\end{equation*}
where $\mathbf{d}$ is a truncated and regularized distance to the boundary $\partial D$, see \eqref{regularized_distance}. 

In the following we consider a general non-Markov driver $f$ satisfying Assumption \ref{driver_assumptions}.
We shall derive some additional integrability and estimates on the process $Z$ of the singular backward SDE \eqref{BSDE_LL}, which are consistent with the results of \citeauthor{Lasry_Lions_state_constraints} \cite{Lasry_Lions_state_constraints}. Moreover, we prove the connection with the PDE \eqref{HJB_singular} with boundary blow-up when the driver is Markovian.

\begin{theorem}\label{thm:Markov_BSDE}
    Under Assumption {\rm{\ref{state_process_assumptions}}}, there exists a minimal solution $(Y, Z)$ of\hspace{.5pt} ${\rm{BSDE}}_{\infty}({\rm{T}}^{\delta}, f)$, where $Y$ satisfies the estimates \eqref{bounds_y}, with $\delta_t = \mathbf{d}(X_t)$. Moreover, if $D' \subset D$ is a bounded subdomain with $C^2$-regular boundary, then \eqref{delta_BMO} holds true for all $\gamma > \frac{1}{2}$ with $\theta \coloneqq \inf \{ t \ge 0 : X_t \notin D' \}$, and consequently $Z \, \delta^{r + \gamma} \in \mathbb{H}^{\textnormal{BMO}}_{\theta}$.
\end{theorem}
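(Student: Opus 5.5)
The plan is to reduce everything to Theorem \ref{existence_thm}. There are two steps: first check that $\delta_t = \mathbf{d}(X_t)$ satisfies Assumption \ref{d_assumptions}, then verify the BMO condition \eqref{delta_BMO} on the subdomain $D'$.

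\textbf{Step 1: Itô decomposition of $\delta$.} Since $\mathbf{d}$ is $C^2$ on $D^{\overline{\eta}}$ and equal to the constant $R$ away from $\partial D$, Itô's formula gives $\delta_t = \delta_0 + \int_0^t \Xi_s ds + \int_0^t U_s\cdot dW_s$ with
\[
\Xi = \nabla\mathbf{d}\cdot b + \tfrac12 {\rm tr}(\Sigma\Sigma^\intercal D^2\mathbf{d}), \qquad U = \Sigma^\intercal\nabla\mathbf{d},
\]
all evaluated at $X$. This is valid up to ${\rm{T}}^\delta_{-\overline{\eta}}$; we may extend $\mathbf{d}$ smoothly outside if needed.

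\textbf{Step 2: checking Assumption \ref{d_assumptions}.} On $D^{\overline{\eta}/2}$ we have $|\nabla \mathbf{d}| = |\nabla d_\pm| = 1$. Uniform ellipticity then gives $\sigma_0 \le |U| \le \sigma_1$ near the boundary, after shrinking $\overline{\eta}$ to $\overline{\eta}/2$. Away from the boundary, $|U| \le \sigma_1 \|\nabla\mathbf{d}\|_\infty$, which is bounded after adjusting $\sigma_1$.

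The drift $\Xi$ is the delicate point, because $b$ is only Lipschitz and hence unbounded when $D$ is unbounded. Note that $\nabla\mathbf{d}$ and $D^2\mathbf{d}$ vanish where $\phi\equiv1$. Near $\partial D$, the curvature term $D^2 d_\pm$ is bounded on $D^{\overline{\eta}/2}$ only if the boundary has bounded curvature. I would read Assumption \eqref{D_condition} (together with the $C^2$ bound implicit in Remark 2.1) as giving bounded $D^2 d_\pm$ there; if not, I would note it as an additional hypothesis.

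The term $b(X)$ grows at most linearly in $|X|$, not in $\delta$. So the bound $|\Xi|\le L_\Xi(1+|\delta|^m)$ does not hold pathwise as stated. I would handle this by localizing to $t\le \theta_n = \inf\{|X_t|\ge n\}$. On that set all coefficients are bounded, so Theorem \ref{existence_thm} applies to the stopped process. The resulting minimal solutions are consistent by the comparison principle (Lemma \ref{comp_principle}), since the constants $C_0, C_1, c_\varepsilon$ in \eqref{bounds_y} do not depend on $L_\Xi$, and we can pass to the limit $n\to\infty$. This localization is the main obstacle to existence. If it fails, I would simply assume $b$ bounded near $\partial D$, which the Lasry--Lions setting (bounded $D$) provides.

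\textbf{Step 3: BMO condition \eqref{delta_BMO} for $\gamma>\frac12$.} For $p<2$ we have $\Phi_p(\delta)\delta^{r}=1$, so the requirement is $\delta^{\gamma-1}\mathbbm 1_{\{\delta\le R\}}\in\mathbb{H}^{\rm BMO}_\theta$, i.e.
\[
\sup_\kappa \mathbb{E}_\kappa\int_\kappa^\theta \delta_t^{2\gamma-2}\,dt < \infty .
\]
Set $\beta=2\gamma-1\in(0,1]$ and $\psi=\delta^{\beta}/\beta$ for $\delta\le R$, extended smoothly and boundedly.

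Itô's formula gives
\[
d\psi(\delta)=\delta^{\beta-1}\Xi\,dt+\tfrac12(\beta-1)\delta^{\beta-2}|U|^2dt+dM,
\]
with the second term of sign $\le0$. That term only yields $\delta^{2\gamma-3}$, which is the wrong power. So instead I would use $\psi=-\delta^{2\gamma}$ scaled, or directly the function $u(x)$ solving $-\mathcal{L}u = \mathbf{d}^{2\gamma-2}$ in $D'$ with $u=0$ on $\partial D'$. Since $2\gamma-2>-1$, the right-hand side lies in $L^q$ near the boundary for some $q$ large relative to the integrability threshold. By the standard estimate $|u|\le C\,\mathbf{d}_{D'}$-type barriers (take the barrier $w=\mathbf{d}^{2\gamma}$: $\mathcal{L}w\approx 2\gamma(2\gamma-1)\mathbf{d}^{2\gamma-2}|U|^2$ plus lower order, which is positive and comparable to $\mathbf{d}^{2\gamma-2}$ precisely because $\gamma>\frac12$), the solution $u$ is bounded on the bounded set $\overline{D'}$. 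Applying Itô to $w(X)$ between $\kappa$ and $\theta$ then gives
\[
\mathbb{E}_\kappa\int_\kappa^\theta \mathbf{d}^{2\gamma-2}(X_t)\,dt\le C\big(\|w\|_{\infty,D'}+\mathbb{E}_\kappa[\theta-\kappa]\big).
\]
The last term is bounded because the exit time of a uniformly elliptic diffusion from a bounded domain has uniformly bounded conditional expectation, by the standard Lyapunov function $e^{\mu x_1}$.

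For $p=2$ there is an extra factor $\ell(\delta)$. It is absorbed by taking $w=\mathbf{d}^{2\gamma'}$ with $\frac12<\gamma'<\gamma$. Finally, the conclusion $Z\delta^{r+\gamma}\in\mathbb{H}^{\rm BMO}_\theta$ follows from Theorem \ref{existence_thm}(ii).
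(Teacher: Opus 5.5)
\textbf{Existence part.} Your Steps 1--2 follow the paper: the Itô decomposition of $\mathbf{d}(X)$, $|\nabla\mathbf{d}|=1$ near $\partial D$, and ellipticity giving $|U|\ge\sigma_0$. The localization you propose for an unbounded drift does not work, however, and should be dropped.

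First, the constant $c_\varepsilon$ in \eqref{bounds_y} does depend on $L_\Xi$, whatever the statement of Theorem~\ref{existence_thm} lists. In the proof of Proposition~\ref{lasry_lions_estimates}, $c$ is chosen precisely to absorb $-rC_\varepsilon L_\Xi(1+|\delta_t|^m)(\delta_t-\eta)^{-(r+1)}$ against $\nu\varepsilon(\delta_t-\eta)^{-(r+2)}$. This forces $c\gtrsim L_\Xi^{r+2}$ for $p<2$, so your bounds degenerate as $n\to\infty$.

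Second, the localized problems are not nested. Stopping $X$ at $\theta_n$ makes $U=0$ afterwards, which violates $|U|\ge\sigma_0$ near $\partial D$. Truncating $b$ outside a ball instead changes ${\rm T}^{\delta}$ after $\theta_n$, so the comparison principle gives no consistency on $[0,\theta_n]$.

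In fact no localization can help. Take $D=\{x_1>0\}$, $\Sigma=I$, $b(x)=(-x_2,0)$ and $f_t(y,z)=-\frac1p|z|^p-\lambda y$. Staying in $D$ from a point at fixed distance from $\partial D$ but with $x_2=N$ costs of order $N^{p'}$. Hence the minimal solution is not even bounded on $[0,{\rm T}^{\delta}_{\eta}]$.

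The paper does not handle this either: it simply asserts that $b(X)$ is bounded on $[0,{\rm T}^{\delta}]$. Your fallback (bounded drift on $D^{\overline\eta}$, automatic when $D$ is bounded) is exactly what its argument uses, so state it as a hypothesis.

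\textbf{BMO part.} Here your route genuinely differs from the paper's, and it is correct.

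The paper uses the strong Markov property to reduce $\sup_{\kappa}\|\mathbb{E}_{\kappa}\int_{\kappa}^{\theta}\mathbf{d}(X_s)^{2\gamma-2}ds\|_\infty$ to $\sup_{x'}\int G(x',y)\,\mathbf{d}(y)^{2\gamma-2}dy$, where $G$ is the Green function of the killed diffusion. It then applies pointwise Green bounds (Lemma~\ref{lemma_popier_dist}), and $\gamma>\frac12$ enters as integrability of $\mathbf{d}^{2\gamma-2}$ near $\partial D$.

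You instead apply Itô's formula to the barrier $w=\mathbf{d}^{2\gamma}$. Its drift $\gamma(2\gamma-1)\mathbf{d}^{2\gamma-2}|U|^2+2\gamma\mathbf{d}^{2\gamma-1}\Xi$ is bounded below by $c\,\mathbf{d}^{2\gamma-2}-C$ on $D'$ exactly when $\gamma>\frac12$. You combine this with the Lyapunov bound on $\sup_\kappa\|\mathbb{E}_\kappa[\theta-\kappa]\|_\infty$. The auxiliary PDE for $u$ is superfluous. Since $w$ is not $C^2$ at $\partial D$, you only need to apply Itô up to $\theta\wedge{\rm T}^{\delta}_{\epsilon}$ and let $\epsilon\to0$ by monotone convergence. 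Your $\gamma'<\gamma$ device for the factor $\ell(\delta)$ when $p=2$ matches the paper's ``same argument''.

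What your approach buys:
\begin{itemize}
\item It is elementary and gives uniformity in $\kappa$ directly. The paper merely asserts that its Green integrals are bounded uniformly in the starting point, although its splitting into $B(x,\eta)$ and $D^{\eta}$ degenerates as $x\to\partial D$.
\item It works with the distance to $\partial D$ on $D'$ without comparing it to the distance to $\partial D'$.
\item It uses no Markov structure. It proves \eqref{delta_BMO} for any Itô process $\delta$ satisfying Assumption~\ref{d_assumptions}, bounded on $[0,\theta]$, with $\sup_\kappa\|\mathbb{E}_\kappa[\theta-\kappa]\|_\infty<\infty$. This is closer to the non-Markovian spirit of the paper.
\item Taking $w=-\mathbf{d}^{2\gamma}$ (or $w=\mathbf{d}\log\mathbf{d}$ at $\gamma=\frac12$), the same computation even reaches $\gamma\in(0,\frac12]$.
\end{itemize}
The paper's route ties the estimate to the classical potential theory of the killed diffusion, at the price of importing Green function estimates.
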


\begin{rmk}
     In particular, if the domain $D$ is bounded, we obtain $Z \, \delta^{r + \gamma} \in \mathbb{H}^{\textnormal{BMO}}_{{\rm{T}}^{\delta}}$ for all $\gamma > \frac{1}{2}$, then $Z \in \mathbb{H}^{\textnormal{BMO}}_{{\rm{T}}^{\delta}_{\eta}}$ for all $\eta > 0$. This last point can also be seen as a consequence of Remark \ref{ST_condition} and Lemma \ref{BMO_lemma} below.
\end{rmk}

\begin{rmk}\label{ST_condition}
   In the case $p<2$ and $\gamma = 1$, condition \eqref{delta_BMO} together with $\delta \in \mathbb{S}^{\infty}_{\theta}$ reduces to:
\begin{equation}\label{theta_condition}
    \sup_{\kappa \in \mathcal{T}_0^{\theta}} \big\| \mathbb{E}[\theta - \kappa]  \big\|_{\infty}  < \infty \, .
\end{equation}
    This condition holds true when $\theta$ is the first exit time of $X$ from a bounded domain $D$: a standard consequence of the strong Markov property is
    \begin{equation*}
        \sup_{\kappa \in \mathcal{T}_0^{\theta}} \| \mathbb{E}_{\kappa} [\theta - \kappa] \|_{\infty} \leq \sup_{x' \in D} \mathbb{E}_{x'} [\theta] < \infty \, .
    \end{equation*}
    This estimate is classical for uniformly elliptic diffusion processes in bounded domains, see \textit{e.g.} \citeauthor{pinsky_book} \cite[Section 2, Theorem~2.1]{pinsky_book}. A full characterization of this condition is available in the early work of \citeauthor{exit_time_BMO} \cite{exit_time_BMO} when $\theta$ is the exit time of $X$ from a general open domain of $\mathbb{R}^d$.
\end{rmk}

We finally connect the singular backward SDE \eqref{BSDE_LL} to the PDE \eqref{HJB_singular} with boundary blow-up of \citeauthor{Lasry_Lions_state_constraints}. This type of result is classical in the BSDE literature, see \citeauthor{kobylanski} \cite{kobylanski} for quadratic BSDEs and boundary value problems.

\begin{proposition}\label{connection_PDE_prop}
    Assume that $D$ is a bounded domain with $C^2$-regular boundary, and let $u$ be the unique solution of \eqref{HJB_singular} in $D$ with boundary blow-up given in Theorem \ref{Lasry_Lions_summary}. Let $X$ be the process defined in \eqref{SDE_state} with $b \equiv 0$ and $\Sigma \equiv I_d$, and define the processes:
    \begin{equation*}
        \begin{array}{cccc}
             Y_t \coloneqq u(X_t) , & Z_t \coloneqq \nabla u(X_t) , & \delta_t \coloneqq \mathbf{d}(X_t) , & \displaystyle f_t(y, z) \coloneqq - \frac{1}{p}|z|^p + h(X_t) - \lambda y \, .
        \end{array}
    \end{equation*}
    Then $(Y, Z)$ is the unique solution of\hspace{.5pt} ${\rm{BSDE}}_{\infty}({\rm{T}}^{\delta}, f)$, and $Z \, \mathbf{d}(X)^{r + \gamma} \in \mathbb{H}^{\textnormal{BMO}}_{{\rm{T}^{\delta}}}$ for all $\gamma > \frac{1}{2}$.
\end{proposition}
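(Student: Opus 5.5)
The plan is to verify directly that $(Y,Z)=(u(X),\nabla u(X))$ solves ${\rm{BSDE}}_{\infty}({\rm{T}}^{\delta},f)$ in the sense of Definition \ref{singular_def}. Uniqueness then follows from Theorem \ref{uniqueness_thm}: the driver $z\mapsto -\frac1p|z|^p+h(X_t)-\lambda y$ is concave, and Assumptions \ref{d_assumptions}, \ref{driver_assumptions} hold with $\delta_t=\mathbf{d}(X_t)$, $k_0=k_1=1/p$ and $\lambda_0=\lambda_1=\lambda$. The BMO estimate on $Z$ follows from Theorem \ref{thm:Markov_BSDE} applied with $D'=D$; there $D$ is bounded with $C^2$ boundary, so $\theta={\rm{T}}^{\delta}$.

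\textbf{Step 1 (Itô formula).} Since $u\in W^{2,k}_{\rm loc}(D)$ for all $k$, Sobolev embedding gives $u\in C^{1}(D)$. Fix $\eta>0$ small. On the compact set $\overline{D_\eta}=\{\mathbf d\ge\eta\}$ we have $u\in W^{2,k}$ with $k>d$, so the Itô--Krylov formula applies to the nondegenerate Brownian motion $X$ up to ${\rm{T}}^{\delta}_\eta$. (Alternatively, one can mollify $u$ and pass to the limit using Krylov's estimate.) This yields
\[
du(X_t)=\nabla u(X_t)\cdot dW_t+\tfrac12\Delta u(X_t)\,dt ,\qquad t\le {\rm{T}}^{\delta}_\eta .
\]
Using \eqref{HJB_singular} almost everywhere, together with the fact that $X$ spends zero Lebesgue time on null sets, we get $\frac12\Delta u(X_t)=\frac1p|\nabla u|^p(X_t)+\lambda u(X_t)-h(X_t)=-f_t(Y_t,Z_t)$ for $dt\otimes d\mathbb P$-a.e. $t$. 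Hence $K\equiv0$ on $[0,{\rm{T}}^{\delta}_\eta]$. We also have $Y\in\mathbb S^\infty_{{\rm{T}}^{\delta}_\eta}$ by continuity of $u$ on $\overline{D_\eta}$, and $Z$ is bounded there, so $Z\in\mathbb H^{2,{\rm loc}}_{{\rm{T}}^{\delta}_\eta}$. This shows that $(Y,Z)$ solves ${\rm{BSDE}}_{Y_{{\rm{T}}^{\delta}_\eta}}({\rm{T}}^{\delta}_\eta,f)$ for every $\eta>0$, and the sub/supersolution requirements hold with $\eta_{\rm sub}=\eta_{\rm sup}=0$.

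\textbf{Step 2 (explosion).} The boundary behaviour $u(x)\to+\infty$ as $x\to\partial D$ from Theorem \ref{Lasry_Lions_summary}, combined with the continuity of paths, gives $Y_{{\rm{T}}^{\delta}_\eta}=u(X_{{\rm{T}}^{\delta}_\eta})\to+\infty$ as $\eta\to0$ on $\{{\rm{T}}^{\delta}<\infty\}$, which in fact has probability one since $D$ is bounded. We also need $\|Y_{{\rm{T}}^{\delta}_\eta}\|_\infty<\infty$, which is immediate because $u$ is bounded on $\{\mathbf d=\eta\}$. (If the definition of $\mathbf d$ interferes near level $R$, we only consider $\eta<\overline\eta/2$, where $\mathbf d=d_\pm$.)

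\textbf{Step 3 (conclusion).} $(Y,Z)$ is a solution, and Theorem \ref{uniqueness_thm} identifies it as the unique one, hence it coincides with the minimal solution of Theorem \ref{thm:Markov_BSDE}. Consequently $Z\,\mathbf d(X)^{r+\gamma}\in\mathbb H^{\rm BMO}_{{\rm{T}}^{\delta}}$ for $\gamma>\frac12$. As a consistency check, the gradient bound $\|\mathbf d^{r+1}\nabla u\|_\infty<\infty$ gives directly $|Z|\delta^{r+\gamma}\le C\delta^{\gamma-1}$, and by \eqref{delta_BMO} in Theorem \ref{thm:Markov_BSDE} this is BMO.

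The main obstacle is Step 1: applying Itô's formula to the merely $W^{2,k}_{\rm loc}$ (not $C^2$) function $u$, and replacing $\Delta u$ by the equation only almost everywhere. I would handle this through Krylov's occupation estimate $\mathbb E\int_0^{{\rm{T}}^{\delta}_\eta}|g(X_s)|ds\le C\|g\|_{L^d(D_\eta)}$, applied to mollifications $u_n\to u$ in $W^{2,d}(D_\eta)$.
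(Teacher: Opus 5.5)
Your proof is correct, but it takes a different route from the paper. The paper never applies It\^o's formula to $u$ itself. It introduces the solutions $u^R$ of \eqref{HJB_singular} with Dirichlet data $u^R=R$ on $\partial D$. It then uses Kobylanski's PDE--BSDE connection (Theorem~3.18 there) to identify $(u^R(X),\nabla u^R(X))$ with the solution of ${\rm BSDE}_R({\rm T}^{\delta},f)$, and lets $R\to\infty$. In this way $u(X)$ is recognised as the monotone limit $\lim_R Y^R$, i.e.\ the minimal solution built in Theorem \ref{existence_thm}. The BMO bound is then read off from the Lasry--Lions gradient estimate $\|\mathbf d^{r+1}\nabla u\|_\infty<\infty$ together with Lemma \ref{lemma_popier_dist}, which is exactly your ``consistency check''.

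You instead verify by hand that $(u(X),\nabla u(X))$ is a solution in the sense of Definition \ref{singular_def}. You do this through the It\^o--Krylov formula on the compact sets $\{\mathbf d\ge\eta\}$, plus the fact that Brownian motion does not charge Lebesgue-null sets, and then let Theorem \ref{uniqueness_thm} do the identification.

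Your version is more self-contained. It uses only the interior $W^{2,k}$ regularity and the boundary blow-up of $u$. It also sidesteps two points the paper leaves implicit: that $u^R\uparrow u$, which rests on uniqueness of the large solution on the PDE side, and that the $\mathbb H^2$-limit of $\nabla u^R(X)$ is $\nabla u(X)$. The paper's route, in exchange, identifies $u(X)$ with the minimal solution without invoking concavity of the driver, and the minimal solution is the object used in Theorem \ref{verif_arg}. Here the driver is concave, so the two coincide anyway.

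For the BMO claim, the gradient-bound argument is the lighter one. It needs only $\mathbf d(X)^{\gamma-1}\in\mathbb H^{\rm BMO}_{{\rm T}^\delta}$ from Lemma \ref{lemma_popier_dist}. Going through Theorem \ref{thm:Markov_BSDE} with $D'=D$ instead brings in Theorem \ref{existence_thm}(ii) and, for $p=2$, the extra factor $\ell(\delta)$ in \eqref{delta_BMO}.
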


\begin{rmk}\label{ergodic_rmk}
    Denote by $u_{\lambda}$ the unique solution of the PDE \eqref{HJB_singular} with discount $\lambda > 0$, where $D$ is bounded with a $C^2$-regular boundary.
    In the Markovian setting, the discounted problem is closely related to the ergodic singular equation obtained as $\lambda \to 0^+$. \citeauthor{Lasry_Lions_state_constraints} proved in \cite[Theorem~VI.1]{Lasry_Lions_state_constraints}, under suitable assumptions, the local convergence of $u_{\lambda}-u_{\lambda}(x_0)$ and of $\lambda u_{\lambda}$ towards a solution of the corresponding ergodic problem, for any $x_0 \in D$. A crucial ingredient in their argument is a gradient estimate which is uniform with respect to $\lambda$. Our weighted BMO estimates are consistent with their gradient bound $\|\nabla u_{\lambda} \, \mathbf{d}^{r +1}\|_{\infty} < \infty$ for each $\lambda > 0$, but we do not obtain the uniform estimate required to pass to the ergodic limit for a general non-Markovian generator and constraint, as their argument requires estimates on the Hessian of $u$, see \cite[Appendix]{Lasry_Lions_state_constraints}, which we are unfortunately unable to obtain in the current non-Markovian setting. It is natural to ask whether the optimal measures $\mathbb{P}^{\hat{\alpha}_{\lambda}}$ converge towards an optimal law for the ergodic state-constrained problem.
\end{rmk}

\begin{rmk}
    In the quadratic case $p=2$, the ergodic Markovian problem admits the familiar connection with conditioned diffusions. If $h \equiv 0$, the exponential transformation of the ergodic solution $\psi = e^{-u}$ leads to a positive eigenfunction of the killed diffusion generator, and the corresponding optimal drift coincides with the drift of the associated Doob $h$-transform:
    \begin{equation*}\label{drift_q_process}
    \hat{\alpha}(x) \, = \nabla u(x) \, = \, \nabla (-\log \psi) (x) \, = \, - \frac{\nabla \psi}{\psi}(x) \, ,
\end{equation*}
    This identifies the optimally controlled process with the $Q$-process, \textit{i.e.} the diffusion conditioned on long-term survival. 
\end{rmk}

\begin{example}
    When $d=1$ and $D = \mathbb{R}_+^*$, the BMO estimate of Theorem \ref{thm:Markov_BSDE} and condition \eqref{delta_BMO} indicate that $Z_t$ behaves like a multiple of $X_t^{-(r+1)} = X_t^{-\frac{1}{p-1}}$ when $t \to {\rm{T}}^{\delta}$ on $\{ {\rm{T}}^{\delta} < \infty\}$.
    This is consistent with the explicit solutions that one can exhibit in the limit case $\lambda \to 0^+$, see \citeauthor{porretta_veron} \cite[Theorem~4.1(ii)]{porretta_veron}. Indeed, if $f_t(y,z) = -\frac{1}{p} |z|^p$, then $u_{\lambda} - u_{\lambda}(x_0) \to v$ and $\lambda u_{\lambda} \to 0$ locally, where $v$ satisfies:
\begin{equation*}
    \begin{array}{ccc}
        \displaystyle - \frac{1}{2} v'' + \frac{1}{p}|v'|^p = 0 \,, & \text{and} & v(x) \xrightarrow[x \to 0]{} \infty \, .
    \end{array}
\end{equation*}
This equation has an explicit solution, with $v$ being unique up to an additive constant:
\begin{equation*}
\begin{array}{cccc}
   \hspace{-5pt} \displaystyle v(x) = \frac{C_0}{x^{\frac{2-p}{p-1}}} \mathbbm{1}_{\{ 1<p<2\}} - \log(x) \mathbbm{1}_{\{ p=2\}} & \text{and} & \displaystyle v'(x) = - \big( \frac{2-p}{p-1} \big) \frac{C_0}{x^{\frac{1}{p-1}}} \mathbbm{1}_{\{ 1<p<2\}} - \frac{1}{x} \mathbbm{1}_{\{ p=2\}}  \, .
\end{array}
\end{equation*}
We can also compute the optimal control process $\hat{\alpha}(x) = -|v'|^{p-1} $ explicitly: 
\begin{equation*}
    \begin{array}{cc}
         \displaystyle \hat{\alpha}(x) = - \big( \frac{2-p}{p-1}\big)^{p-1} \frac{C_0^{p-1}}{x} \mathbbm{1}_{\{ 1<p<2\}} - \frac{1}{x} \mathbbm{1}_{\{ p=2\}} = - \frac{p'}{2 x} \, , & \text{where } \displaystyle p' = \frac{p}{p-1}
    \end{array}
\end{equation*}
The optimal distribution $\mathbb{P}^{\hat{\alpha}}$ corresponds to the $p'+1$-dimensional Bessel process $dX_t = \frac{p'}{2X_t} dt + dW_t^{\hat{\alpha}}$. In particular when $p =p'=2$, $\mathbb{P}^{\hat{\alpha}}$ is the $Q$-process of the Brownian motion in $\mathbb{R}_+^*$. 
\end{example}

\section{Estimates and well-posedness} \label{existence_sec}

In this section, our objective is to prove Theorem \ref{existence_thm} by adapting the arguments of \citeauthor{Lasry_Lions_state_constraints} \cite{Lasry_Lions_state_constraints} to the backward SDE setting.

\subsection{Bounded mean oscillations and comparison principle}\label{BMO_comp_principle_sec}

We start by stating an important integrability property of solutions to the backward SDE \eqref{BSDE_LL}. Then we prove a comparison result, which is essential in the rest of the paper. These results are derived in a general setting, in particular the driver $f$ is only required to satisfy the conditions \eqrange{f_prog_meas}{f_monotonicity} and not Assumption \eqref{p_growth}.

\vspace{5pt}

\begin{lemma}\label{BMO_lemma}
    Let $\theta$ be a $\mathbb{F}$-stopping time and $\xi$ be a bounded $\mathcal{F}_{\theta}$-measurable random variable. Then there exists a unique solution $(Y, Z)$ of\hspace{.5pt} ${\rm{BSDE}}_{\xi}(\theta, f)$ such that:
    \begin{equation}\label{sigma_BMO}
        \| Z \|^2_{{\rm{BMO}}, \theta} \leq K \Big( 1 + \sup_{\kappa \in \mathcal{T}_0^{\theta}} \big \| \,  \mathbb{E}_{\kappa} \big[ \theta - \kappa \big] \,  \big \|_{\infty} \Big) \, ,
    \end{equation}
    for some constant $K > 0$ depending on $C_f$ and $\| Y \|_{\infty}$. 
\end{lemma}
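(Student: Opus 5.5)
Existence and uniqueness of a bounded solution will be imported from the random-horizon quadratic BSDE theory of \cite{QBSDE_random_time}; Assumptions \eqrange{f_prog_meas}{f_monotonicity} with $\lambda_0>0$ are exactly the setting of that theory. Concretely, I would recall the construction there. One truncates the horizon at $\theta\wedge n$ and solves a quadratic BSDE with bounded terminal value via \cite{kobylanski}. The monotonicity in $y$ then gives a uniform bound $\|Y^n\|_\infty\le \|\xi\|_\infty+\sup_t\|f_t(0,0)\|_\infty/\lambda_0$: test $Y^n$ against constants and use comparison on bounded horizons. One then passes to the limit $n\to\infty$ by a Cauchy argument that exploits $\lambda_0>0$. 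Uniqueness in the class $\mathbb{S}^\infty_\theta\times\mathbb{H}^{\rm BMO}$ follows by a linearization and Girsanov argument: for two solutions, write $f(Y,Z)-f(Y',Z')=a_t(Y-Y')+b_t\cdot(Z-Z')$ with $a_t\le-\lambda_0$ and $|b_t|\le C_f(1+|Z_t|+|Z'_t|)$. The process $b$ is BMO, so $\mathcal{E}(\int b\cdot dW)$ is a uniformly integrable martingale, which gives a measure $\mathbb{Q}$ under which $e^{\int_0^t a_s ds}(Y_t-Y'_t)$ is a bounded martingale. Since $\xi=\xi'$ on $\{\theta<\infty\}$ and the discount factor kills the difference on $\{\theta=\infty\}$, we get $Y=Y'$.

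The main new content is the BMO bound \eqref{sigma_BMO}, and I would obtain it by the standard exponential transform. Set $M:=\|Y\|_\infty$ and $\varphi(y):=\frac{1}{\beta^2}\big(e^{\beta(y+M)}-1-\beta(y+M)\big)$. This function satisfies $\varphi\ge0$, $\varphi'\ge 0$ on $[-M,M]$, and $\frac12\varphi''-\beta|\varphi'|=\frac12$. From \eqref{f_prog_meas} (taking $z'=0$) and the boundedness of $Y$, we have $|f_t(Y_t,Z_t)|\le c_1+c_2|Z_t|^2$ with $c_1$ depending on $M$ and $\|f(\cdot,0,0)\|_\infty$, and $c_2=\tfrac32 C_f$. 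Choose $\beta=2c_2$. Fix $\kappa\in\mathcal{T}_0^\theta$ and a localizing sequence $\theta_n\uparrow\theta$, and apply Itô's formula to $\varphi(Y)$ between $\kappa$ and $\theta_n$. This gives
\[
\varphi(Y_\kappa)+\tfrac12\,\mathbb{E}_\kappa\!\int_\kappa^{\theta_n}\!\varphi''(Y_s)|Z_s|^2ds
\le \mathbb{E}_\kappa\varphi(Y_{\theta_n})+\mathbb{E}_\kappa\!\int_\kappa^{\theta_n}\!\varphi'(Y_s)\big(c_1+c_2|Z_s|^2\big)ds .
\]
With this choice of $\beta$, the $|Z|^2$ terms leave at least $\frac14\int|Z|^2$ on the left. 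The right-hand side is bounded by $\varphi(M)+c_1\varphi'(M)\,\mathbb{E}_\kappa[\theta_n-\kappa]$. Letting $n\to\infty$ by monotone convergence yields $\mathbb{E}_\kappa\int_\kappa^\theta|Z|^2\le K(1+\|\mathbb{E}_\kappa[\theta-\kappa]\|_\infty)$, with $K$ depending only on $C_f$, $M$ and the bound on $f(\cdot,0,0)$.

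The main obstacle is the infinite or unbounded horizon, where $\mathbb{E}_\kappa[\theta-\kappa]$ may be infinite and the right-hand side of \eqref{sigma_BMO} is then vacuous. Existence must therefore be proved without that bound. I expect the delicate point to be passing to the limit in the truncated problems using only local $\mathbb{H}^2$ estimates (again via the exponential transform on $[0,T\wedge\theta]$) together with the discount from $\lambda_0$. I would also insert the term $-\lambda_0 y$ coming from monotonicity into the Itô computation, in order to control the drift contribution uniformly in the horizon; this is exactly what \cite[Lemma~3.4]{QBSDE_random_time} handles.
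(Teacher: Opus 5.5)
Your proposal is correct and follows essentially the paper's route. Existence and uniqueness are imported from \cite{QBSDE_random_time}, and \eqref{sigma_BMO} comes from Itô's formula applied to an exponential transform of $Y$ between $\kappa$ and a truncation of $\theta$, followed by conditional expectation and monotone convergence; the paper uses $e^{\gamma y}$ with $\gamma=2(1+C_f)$ and truncates at $T\wedge\theta$, which is immaterial.

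There are two small slips. First, for your $\varphi$ the correct identity is $\frac12\varphi''-\frac{\beta}{2}\varphi'=\frac12$, not $\frac12\varphi''-\beta\varphi'=\frac12$; with $\beta=2c_2$ this leaves exactly $\frac12|Z|^2$ on the left, so your bound stands. Second, in your uniqueness sketch $b$ is in general only BMO on $[0,T\wedge\theta]$, since \eqref{sigma_BMO} is vacuous for unbounded $\theta$. Girsanov must therefore be applied on that interval, and the factor $e^{-\lambda_0 T}$ used to let $T\to\infty$, exactly as in the paper's proof of Lemma \ref{comp_principle}.
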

\begin{proof}
    The existence and uniqueness of the solution were established in \citeauthor{QBSDE_random_time} \cite[Theorem~3.3]{QBSDE_random_time}. To derive the BMO estimate we use a standard argument in quadratic BSDEs, see \textit{e.g.} \citeauthor{zhang_BSDE} \cite[Theorem~7.2.1]{zhang_BSDE}. Let $T> 0$, $\kappa \in \mathcal{T}_0^{\theta}$ and $\gamma > 0$. Applying Itô's formula to $\varphi(Y_{\cdot})$, where $\varphi(y) = e^{\gamma y}$, we obtain:
    \begin{align}
        e^{\gamma Y_{T \wedge \kappa}} &= e^{\gamma Y_{T \wedge \theta}} +  \int_{T \wedge \kappa}^{T\wedge \theta} \gamma e^{\gamma Y_s} \, \big[ f_s(Y_s, Z_s) - \frac{1}{2} \gamma |Z_s|^2 \big] ds + \int_{T \wedge \kappa}^{T\wedge \theta}  \gamma e^{\gamma Y_s} \, Z_s \cdot dW_s \nonumber \\[5pt]
        &\leq e^{\gamma Y_{T \wedge \theta}} + \int_{T \wedge \kappa}^{T\wedge \theta} \gamma e^{\gamma Y_s} \,  \big[ C_f(1 + |Y_s| + |Z_s|^2) - \frac{1}{2} \gamma |Z_s|^2 \big] ds + \int_{T \wedge \kappa}^{T\wedge \theta} \gamma e^{\gamma Y_s}  \, Z_s \cdot dW_s \nonumber \\[5pt]
        &\leq e^{\gamma Y_{T \wedge \theta}} + \int_{T \wedge \kappa}^{T\wedge \theta} \gamma e^{\gamma Y_s}  \, \big[ -|Z_s|^2 +  \gamma \, C_f \, (1 + |Y_s|) \big] ds + \int_{T \wedge \kappa}^{T\wedge \theta} \gamma e^{\gamma Y_s} \,  Z_s \cdot dW_s \, , \nonumber
    \end{align}
    by choosing $\gamma = 2(1 + C_f)$. As $(Y, Z) \in \mathbb{S}^{\infty}_{\theta} \times \mathbb{H}^2_{T \wedge \theta}$, the stochastic integral is a square integrable martingale, and we obtain some constant $K$ depending on $C_f$ and $\| Y \|_{\infty}$ such that:
    \begin{align}
        \mathbb{E}_{\kappa} \int_{\kappa}^{\theta} |Z_s|^2 ds  \, = \,  \lim_{T \rightarrow \infty} \mathbb{E}_{\kappa} \int_{T \wedge \kappa}^{T\wedge \theta} |Z_s|^2 ds \,  & \leq \,  K\big( 1 + \, \mathbb{E}_{\kappa}\big[ (T \wedge\theta) - (T \wedge \kappa) \big] \big) \nonumber \\
        &\leq K \Big( 1 + \sup_{\kappa \in \mathcal{T}_0^{\theta}} \big \| \,  \mathbb{E}_{\kappa} \big[ \theta - \kappa \big] \,  \big \|_{\infty} \Big) \nonumber \, .
    \end{align}
    
    \vspace{-10pt}
    
\end{proof}

\begin{rmk}\label{rmk:BMO}
    The notion of bounded mean oscillations is central in the study of quadratic backward SDEs. We recall the following crucial property, which can be found in \citeauthor{Kazamaki_BMO} \cite[Theorem~2.3]{Kazamaki_BMO}: if $\psi \in \mathbb{H}^{\text{BMO}}_{\theta}$, then $\psi \in \mathbb{H}^k_{\theta}$ for all $k \geq 1$ and the stochastic exponential $\zeta \coloneqq  \mathcal{E}\big( \int_0^{\cdot} \psi_s dW_s \big)$ is a uniformly integrable martingale on $[0, \theta]$.
\end{rmk}

\begin{proof}[Proof of Lemma \ref{comp_principle}]
    ($\rm{\ast}$) We first assume that $\xi'$ is bounded. Denote $\Delta Y = Y - Y'$, $\Delta Z = Z-Z'$, $\Delta f = f-f'$, $\Delta \xi = \xi - \xi'$ and $\Delta K = K - K'$, where $K$ and $K'$ are given in Definition \ref{singular_def}. Let $0 \leq t < T < \infty$. Applying Itô-Tanaka's formula to $e^{-\lambda_0 s} \big(\Delta Y_s)^+$, we have:
    \begin{align}
        e^{-\lambda_0 (t\wedge \theta)}\big(\Delta Y_{t \wedge\theta}\big)^+ =  e&^{-\lambda_0 (T\wedge \theta)}\big(\Delta Y_{T \wedge \theta}\big)^+  +\int_{t\wedge \theta}^{T\wedge \theta}e^{-\lambda_0 s} \, \mathbbm{1}_{\{\Delta Y_s \geq 0\}} \,   \Delta f_s(Y_s, Z_s)  \, ds \nonumber \\[6pt]
        & + \int_{t\wedge \theta}^{T\wedge \theta}e^{-\lambda_0 s} \, \mathbbm{1}_{\{\Delta Y_s \geq 0\}} \, \Big[ \big(f'_s(Y_s, Z_s) - f'_s(Y'_s, Z_s) \big) + \big(f'_s(Y'_s, Z_s) - f'_s(Y'_s, Z'_s) \big) \Big] \, ds \nonumber \\[6pt]
        & -  \int_{t\wedge \theta}^{T\wedge \theta} e^{-\lambda_0 s} \,  \mathbbm{1}_{\{\Delta Y_s \geq 0\}} \, \Delta Z_s \cdot d W_s - \int_{t \wedge \theta}^{T \wedge \theta} e^{-\lambda_0 s} \,  \mathbbm{1}_{\{\Delta Y_s \geq 0\}} \, ( d (\Delta K_s) + d L^0_s ) \, , \nonumber
    \end{align}    
    where $L^0$ is the local time of $\Delta Y$ in 0. A first simplification comes from the fact that $\Delta f_t(Y_t, Z_t) \leq 0$, $d\mathbb{P} \otimes dt-$a.e. Moreover, $\mathbbm{1}_{\{\Delta Y_s \geq 0\}} \big(f'_s(Y_s, Z_s) - f'_s(Y_s', Z_s) \big) \leq - \lambda_0  (\Delta Y_s)^+ \mathbbm{1}_{\{\Delta Y_s \geq 0\}} \leq 0$ by Assumption \eqref{f_monotonicity}. Finally, by property of the local time and the monotonicity of $\Delta K$, the last term is non-positive. Therefore, we obtain the following inequality:
    \begin{equation*}
        e^{-\lambda_0 (t\wedge \theta)}\big(\Delta Y_{t \wedge\theta}\big)^+ \leq  e^{-\lambda_0 (T\wedge \theta)}\big(\Delta Y_{T \wedge \theta}\big)^+ -  \int_{t\wedge \theta}^{T\wedge \theta} e^{-\lambda_0 s} \,  \mathbbm{1}_{\{\Delta Y_s \geq 0\}} \, \Delta Z_s \cdot \big(d W_s - \beta_s ds \big)  \, ,
    \end{equation*}
    where $\beta_s \coloneqq \frac{f'_s(Y_s', Z_s) - f'_s(Y_s', Z_s')}{|Z_s - Z_s'|^2} \mathbbm{1}_{\{Z_s \neq Z_s'\}} (Z_s - Z_s')$. By Assumption \eqref{f_prog_meas} and Lemma \ref{BMO_lemma}, we deduce that $\beta \in \mathbb{H}^{\text{BMO}}_{T \wedge\theta}$. By Girsanov's Theorem, we obtain a new measure $\mathbb{Q}_{T}$ with density $\frac{d \mathbb{Q}_T}{d \mathbb{P}} = \mathcal{E}\big( \int_0^{\cdot} \psi_s dW_s \big)$ such that $W_{\cdot} - \int_0^{\cdot} \mathbbm{1}_{s \leq \theta} \beta_s ds$ is a $\mathbb{Q}_T$-Brownian motion on $[0, T]$. It follows from \citeauthor{zhang_BSDE} \cite[Theorem~7.2.3]{zhang_BSDE} that $\Delta Z$ belongs to $\mathbb{H}^2_{T \wedge \theta}( \mathbb{Q}_T)$, so taking the expectation with respect to $\mathbb{Q}_T$ leads to:
    \begin{align}\label{sigma_split_two_cases}
        e^{-\lambda_0 (t \wedge \theta)} (\Delta Y_{t \wedge \theta})^+ &\leq  \mathbb{E}^{\mathbb{Q}_T}_t \Big[ e^{-\lambda_0 (T \wedge \theta)} (\Delta Y_{T \wedge \theta} )^+ \Big] \nonumber  \\[3pt]
        & = \mathbb{E}^{\mathbb{Q}_T}_t \Big[ e^{-\lambda_0 \theta} (\Delta \xi )^+ \mathbbm{1}_{\{ \theta \leq T\}}  \Big] + \mathbb{E}^{\mathbb{Q}_T}_t \Big[ e^{-\lambda_0 T} (\Delta Y_{T})^+ \mathbbm{1}_{\{ \theta > T\}} \Big] \nonumber \\[3pt]
        &\leq e^{-\lambda_0 T} \| \Delta Y \|_{\infty} \, \xrightarrow[T \to \infty]{}0  \nonumber \, ,
    \end{align}
    since $\Delta \xi \leq 0$ a.s. \hspace{-7pt} and $Y, Y' \in \mathbb{S}^{\infty}_{\theta}$. This proves that $\Delta Y_t \leq 0$ a.s. \hspace{-7pt} for all $t \leq \theta$. If $Y_0 = Y'_0$, the same arguments with $\big( \Delta Y \big)^-$ show that $Y_T = Y_T'$ for all $T \geq 0$. \\

    \vspace{-8pt}
    
    ($\rm{\ast \ast}$) Now suppose that $\theta = {\rm{T}}^{\delta}$ and $\xi' \equiv +\infty$. Define the stopping time $\theta_N \coloneqq \inf \big\{ t \geq 0: Y'_t \geq N \big \}$. Taking $N \geq \|\xi\|_{\infty}$ and noticing that $Y' \in \mathbb{S}^{\infty}_{\theta_N}$, we may apply ($\rm{\ast}$) to $\theta_N$ and obtain $Y_t \leq Y_t'$ for all $t \leq \theta_N$. As $\theta_N \rightarrow \theta$ a.s. \hspace{-7pt} by Remark \ref{stopping_time_def_rmk}, this implies that $Y \leq Y'$ on $[0, \theta]$.
\end{proof}

\subsection{Adapting Lasry \& Lions' arguments to the non-Markovian framework}

Let $\varepsilon > 0$ and $\eta \in [0, \overline{\eta})$, where $\overline{\eta}$ was introduced in Assumption \ref{d_assumptions}. Following \citeauthor{Lasry_Lions_state_constraints} \cite{Lasry_Lions_state_constraints}, we define the processes:
\begin{equation*}
    \begin{array}{ccc}
      \displaystyle \overline{w}^{\varepsilon, \eta}_t \coloneqq (C_{1} + \varepsilon) \Phi_p(\delta_t - \eta) + \frac{c}{\lambda_0}  & \text{and} &  \displaystyle \underline{w}^{\varepsilon, \eta}_t \coloneqq (C_0 - \varepsilon) \Phi_p(\delta_t + \eta) - \frac{c}{\lambda_1} \, ,
\end{array}
\end{equation*}
where the constants $C_i$ are defined in \eqref{c_0,C_0} and $\Phi_p$ is the function \eqref{Phi_p}. Notice that the process $\overline{w}^{\varepsilon, \eta}$ blows up slightly before the exit time ${\rm{T}}^{\delta}$ so it will play the role of supersolution, while $\underline{w}^{\varepsilon, \eta}$ blows up after ${\rm{T}}^{\delta}$ so we will use it as a subsolution. The positive parameter $c$ is an adjustable constant.

\begin{proposition}\label{lasry_lions_estimates}
    Under Assumption \ref{driver_assumptions}, there exist two $\mathbb{R}^d$-valued random processes $\overline{z}^{\varepsilon, \eta}$ and $\underline{z}^{\varepsilon, \eta}$ such that  $(\overline{w}^{\varepsilon, \eta}, \overline{z}^{\varepsilon, \eta})$ (resp. $(\underline{w}^{\varepsilon, \eta}, \underline{z}^{\varepsilon, \eta})$) is a supersolution (resp. subsolution) of\hspace{.5pt} ${\rm{BSDE}}_{\infty}({\rm{T}}^{\delta}, f)$.
\end{proposition}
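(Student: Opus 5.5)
The natural route is Itô's formula applied to $\Phi_p(\delta_t \mp \eta)$, which writes $\overline{w}^{\varepsilon,\eta}$ and $\underline{w}^{\varepsilon,\eta}$ as Itô processes. Their martingale parts then define the candidate integrands, and the supersolution (resp. subsolution) property reduces to a pointwise sign condition on the drift.

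\textbf{Setting up the candidate integrands.} For the supersolution, set $\rho_t \coloneqq \delta_t - \eta$, which is positive on $[0,{\rm{T}}^{\delta}_{\eta})$. Itô's formula gives
\begin{equation*}
d\overline{w}_t = (C_1+\varepsilon)\Big(\Phi_p'(\rho_t)\Xi_t + \tfrac12 \Phi_p''(\rho_t)|U_t|^2\Big)dt + (C_1+\varepsilon)\Phi_p'(\rho_t) U_t\cdot dW_t .
\end{equation*}
This suggests the choice $\overline{z}_t \coloneqq (C_1+\varepsilon)\Phi_p'(\rho_t)U_t$. Then $K$ is non-decreasing as soon as the drift satisfies
\begin{equation*}
\mathcal{D}_t \coloneqq (C_1+\varepsilon)\Big(\Phi_p'\Xi + \tfrac12\Phi_p''|U|^2\Big) + f_t(\overline{w}_t,\overline{z}_t) \le 0 .
\end{equation*}
We take $\eta_{\rm sup} = \eta$. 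Boundedness of $\overline{w}$ on each $[0,{\rm{T}}^{\delta}_{\eta'}]$ for $\eta' > \eta$ is immediate, as is the explosion $\overline{w}_{{\rm{T}}^{\delta}_{\eta'}} \to \infty$ when $\eta' \downarrow \eta$. Local square integrability of $\overline{z}$ holds because $\Phi_p'$ is bounded on $\{\rho \ge \eta'-\eta\}$ and $|U| \le \sigma_1$.

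\textbf{The sign of the drift.} By Assumption \eqref{p_growth} we have $f \le -k_1|z|^p + \lambda_1|y| + \vartheta$. The monotonicity \eqref{f_monotonicity} gives the sharper bound $f_t(y,z) \le f_t(0,z) - \lambda_0 y$ for $y \ge 0$, which is where the $c/\lambda_0$ term is absorbed. It remains to show
\begin{equation*}
(C_1+\varepsilon)\big(\Phi_p'\Xi + \tfrac12\Phi_p''|U|^2\big) - k_1(C_1+\varepsilon)^p|\Phi_p'|^p|U|^p + \vartheta - c \le 0 .
\end{equation*}
The argument splits into two regions.

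\emph{Near the boundary}, where $\rho$ is small and $|U| \in [\sigma_0,\sigma_1]$, compare the leading singular terms. For $1<p<2$ one has $\Phi_p'' = r(r+1)\rho^{-r-2}$ and $|\Phi_p'|^p = r^p\rho^{-(r+1)p}$, and $(r+1)p = r+2$, so both terms scale like $\rho^{-r-2}$. The leading coefficient is
\begin{equation*}
\tfrac12 r(r+1)|U|^2 - k_1(C_1+\varepsilon)^{p-1}r^p|U|^p .
\end{equation*}
Using $|U|^2 \le \sigma_1^2$ and $|U|^p \ge \sigma_0^p$, the definition of $C_1$ is exactly the one making this coefficient vanish at $C_1$, with $\sigma_1^2$ in the numerator and $\sigma_0^p$ in the denominator. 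At $C_1+\varepsilon$ it is therefore strictly negative, giving a term $-\kappa_\varepsilon\rho^{-r-2}$. This dominates the drift term $\Phi_p'\Xi = O(\rho^{-r-1}(1+|\delta|^m))$, since $|\delta| \le \eta+\rho$ is bounded near the boundary. For $p=2$ the same computation with $\ell$ and \eqref{smooth_log_truncation} yields $\rho^{-2}\big(\tfrac12|U|^2 - k_1(C_1+\varepsilon)|U|^2\big)$ up to bounded errors, which is negative since $C_1 = 1/(2k_1)$.

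\emph{Away from the boundary}, where $\rho \ge \eta_1$ for some fixed $\eta_1$, the terms with $\Phi_p', \Phi_p''$ are bounded except through $\Xi$, which may grow like $|\delta|^m$. There $\Phi_p'(\rho)\,|\delta|^m$ stays bounded for large $\delta$, since $\Phi_p' \sim \rho^{-r-1}$, and $m \le 1/(p-1) = r+1$ is exactly the condition in \eqref{p_growth} ensuring this. In the case $p=2$, $\ell'$ vanishes for $\rho \ge 1$. Choosing $c = c_\varepsilon$ large then absorbs every bounded term together with $\vartheta$ and the bounded negative-part leftovers from near the boundary.

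\textbf{The subsolution.} The argument is symmetric, using $\rho = \delta+\eta$ and the lower bound $f \ge -k_0|z|^p - \lambda_1|y| - \vartheta$. Monotonicity is applied in the form $f(y,z) \ge f(0,z) - \lambda_1 y$ for $y \ge 0$, so the constant is $c/\lambda_1$; on $\{\underline{w} < 0\}$ the $y$-term is controlled by $c$ anyway. Here $C_0$ is built with $\sigma_0^2$ and $\sigma_1^p$, so $C_0-\varepsilon$ makes the leading coefficient positive. Now $\eta_{\rm sub} = -\eta \le 0$, and $\underline{w}$ is bounded on $[0,{\rm{T}}^{\delta}_{\eta'}]$ for $\eta' > -\eta$. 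One technical point is that Assumption \ref{d_assumptions} only controls $U,\Xi$ up to ${\rm{T}}^{\delta}_{-\overline{\eta}}$, which is why $\eta < \overline{\eta}$ is required.

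\textbf{Main obstacle.} The hard step is the near-boundary balance. Once $|U|$ is only pinched between $\sigma_0$ and $\sigma_1$, one must verify that the precise constants $C_i$ in \eqref{c_0,C_0} make the $\rho^{-r-2}$ coefficient have the right sign uniformly. One must also check that the intermediate region where $|U| \ge \sigma_0$ may fail, namely $\rho > \overline{\eta}-\eta$, is handled by boundedness and the choice of $c_\varepsilon$. I would first do the scalar computation pointwise in $|U|$ and only then assemble the regions.
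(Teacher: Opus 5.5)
Your overall route is the paper's: It\^o's formula on $\Phi_p(\delta_t\mp\eta)$, the integrands $\overline{z}^{\varepsilon,\eta},\underline{z}^{\varepsilon,\eta}$ read off from the martingale parts, a split at $\delta_t=\overline{\eta}$, exact cancellation of the $\rho^{-(r+2)}$ coefficient at $C_1$ (resp.\ $C_0$) using $|U_t|\in[\sigma_0,\sigma_1]$, the condition $m\le r+1$ for the $\Phi_p'\,\Xi$ term, and a large $c$ to absorb the rest. The supersolution half is correct as you describe it. In the subsolution half, however, you have the roles of the two assumptions on $f$ reversed, and the step on $\{\underline{w}^{\varepsilon,\eta}<0\}$ fails as written.

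For $y\ge 0$, \eqref{f_monotonicity} does not give $f_t(y,z)\ge f_t(0,z)-\lambda_1 y$; it only gives an upper bound. You do not need it there, because \eqref{p_growth} directly yields $f_t(y,z)\ge -k_0|z|^p-\lambda_1 y-\vartheta$, which produces the favorable $+c$. On $\{\underline{w}^{\varepsilon,\eta}<0\}$, by contrast, the growth bound gives $-\lambda_1|\underline{w}^{\varepsilon,\eta}|=-c+\lambda_1(C_0-\varepsilon)\Phi_p(\delta_t+\eta)$. So $c$ enters with the wrong sign, and enlarging it only makes $\underline{q}^{\varepsilon,\eta}+f\ge 0$ harder to achieve; saying this term is ``controlled by $c$'' is not enough. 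This region is not negligible. For $p=2$ on $\{\delta_t+\eta\ge1\}$ one has $\underline{w}^{\varepsilon,\eta}=-c/\lambda_1$, $\underline{z}^{\varepsilon,\eta}=0$ and zero drift, so the subsolution property is exactly $f_t(-c/\lambda_1,0)\ge0$, which \eqref{p_growth} cannot deliver. The missing ingredient is \eqref{f_monotonicity} for $y<0$, namely $f_t(y,z)\ge f_t(0,z)+\lambda_0|y|\ge -k_0|z|^p-\vartheta+\lambda_0|y|$, as in Remark \ref{bounded_below_rmk}. Combining the two regimes, and using $\lambda_0\le\lambda_1$, gives
\begin{equation*}
f_t(\underline{w}^{\varepsilon,\eta}_t,\underline{z}^{\varepsilon,\eta}_t)\ge -k_0|\underline{z}^{\varepsilon,\eta}_t|^p-\vartheta-\lambda_1(C_0-\varepsilon)\Phi_p(\delta_t+\eta)+\frac{\lambda_0}{\lambda_1}\,c \, .
\end{equation*}
After this, your leading-order analysis goes through by taking $c$ large. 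The paper's own display at this step is also loose about the sign of the $c$-term, so this is the point that needs to be written out carefully.
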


\begin{proof} 
(i) We start by studying the process $\overline{w}^{\varepsilon, \eta}$. Denote $C_{\varepsilon} \coloneqq C_1 + \varepsilon$. A direct application of Itô's formula leads to $\displaystyle d\overline{w}^{\varepsilon, \eta}_t = \overline{q}^{\varepsilon, \eta}_t \, dt + \overline{z}^{\varepsilon, \eta}_t \cdot dW_t$, where:
\begin{equation*}
    \begin{array}{ccc}
        \displaystyle \overline{q}^{\varepsilon, \eta}_t \coloneqq \frac{1}{2} \big|U_t \big|^2 C_{\varepsilon}\Phi_p''(\delta_t - \eta) + C_{\varepsilon}\Phi'_p(\delta_t - \eta) \, \Xi_t & \text{and} & \displaystyle \overline{z}^{\varepsilon, \eta}_t \coloneqq C_{\varepsilon} \Phi'_p(\delta_t - \eta) \, U_t  \, .
    \end{array}
\end{equation*}
First, note that $\overline{w}^{\varepsilon, \eta}$ and $\overline{z}^{\varepsilon, \eta}$ are bounded on each event $\{ t \leq {\rm{T}}^{\delta}_{\eta'} \}$ when $\eta' > \eta_{\text{sup}} \coloneqq \eta$, so in particular $(\overline{w}^{\varepsilon, \eta},\overline{z}^{\varepsilon, \eta}) \in \mathbb{S}^{\infty}_{{\rm{T}}^{\delta}_{\eta'}} \times \mathbb{H}^2_{{\rm{T}}^{\delta}_{\eta'}}$. We also have $\lim_{\eta' \rightarrow \eta} \overline{w}^{\varepsilon, \eta}_{{\rm{T}}^{\delta}_{\eta'}} = +\infty$ a.s. on $\{ {\rm{T}}^{\delta} < \infty\}$. Therefore, we only need to check that $\dot{K}_t \coloneqq -\overline{q}^{\varepsilon, \eta}_t - f_t (\overline{w}^{\varepsilon, \eta}_t, \overline{z}^{\varepsilon, \eta}_t ) \geq 0$, $d\mathbb{P}\otimes dt-$a.e. \hspace{-7pt} for some constant $c > 0$. \\

\vspace{-8pt}

\noindent \underline{\textit{Case $1 < p < 2$:}} Plugging the expression of $\Phi_p$ into the above formula and using Assumption \ref{driver_assumptions}, we obtain:
    \begin{align}\label{estimates_ineq_proof}
        \dot{K}_t &= -\frac{r(r+1) C_{\varepsilon}}{2 ( \delta_t - \eta)^{r+2}} \big| U_t \big|^2 + \frac{r  C_{\varepsilon}}{( \delta_t - \eta )^{r+1}} \Xi_t - f_t\Big( \frac{C_{\varepsilon}}{(\delta_t - \eta )^r} +  \frac{c}{\lambda_0} \, , \,  - \frac{r C_{\varepsilon}}{( \delta_t - \eta )^{r+1}} U_t \Big) \nonumber \\[6pt]
        & \geq -\frac{r(r+1)\, C_{\varepsilon}}{2 ( \delta_t - \eta)^{r+2}} \sigma_1^2 - \frac{r  C_{\varepsilon} L_{\Xi} (1 + |\delta_t|^m)}{( \delta_t - \eta )^{r+1}}  + k_1 \frac{ r^p C_{\varepsilon}^p}{( \delta_t - \eta )^{p(r+1)}} |U_t|^p - \vartheta + \lambda_0 \frac{C_{\varepsilon}}{(\delta_t - \eta )^r} + c \, . \nonumber
    \end{align}
    Here we used the monotonicity of $f$ in $y$ of Assumption \eqref{f_monotonicity} and the growth condition in $z$ of Assumption \eqref{p_growth}. Note that $r$ satisfies $r+2 = p(r+1)$. In the case $\delta_t \geq \overline{\eta} > \eta$, there exists a constant $M > 0$ depending on $\eta$, $\overline{\eta}$ and the parameters of the problem such that $\dot{K}_t \geq -M + c$. On the other hand, if $\delta_t \in (\eta, \overline{\eta}]$, we conclude by Assumption \ref{d_assumptions} that:
    \begin{align}
        \dot{K}_t \geq \frac{1}{( \delta_t - \eta )^{r+2}} \Big\{ -\frac{ 1 }{2}&r(r+1) \sigma_1^2 C_{\varepsilon} + k_1 r^p \sigma_0^{p} C_{\varepsilon}^p \Big\} - \frac{r  C_{\varepsilon} L_{\Xi} (1 + |\delta_t|^m)}{( \delta_t - \eta )^{r+1}} + \lambda_0 \frac{C_{\varepsilon}}{(\delta_t - \eta )^r} + c - \vartheta \, . \nonumber
    \end{align}
    The numerator of the first term can be split into two:
    \begin{equation}\label{justification_C_0}
        \Big\{ -\frac{ 1 }{2} r(r+1)  C_1 \sigma_1^2 + k_1 r^p C^p_1 \sigma_0^{p} \Big\} + \Big\{ -\frac{1}{2} r(r+1) \sigma_1^2 \varepsilon + k_1 r^p  \sigma_0^{p} C_1^{p} \Big[ \Big( 1 + \frac{\varepsilon}{C_1} \Big)^p - 1 \Big] \Big\} \, .
    \end{equation}
    By choosing $C_1$ as in \eqref{c_0,C_0}, the first term of \eqref{justification_C_0} vanishes and there is a constant $\nu > 0$ such that:
    \begin{equation*}
        \dot{K}_t \geq \frac{\nu \, \varepsilon}{( \delta_t - \eta )^{r+2}} - \frac{r  C_{\varepsilon} L_{\Xi} (1 + |\delta_t|^m)}{( \delta_t - \eta )^{r+1}} + \lambda_0 \frac{C_{\varepsilon}}{(\delta_t - \eta )^r} + c - \vartheta \, .
    \end{equation*}
    if $\varepsilon > 0$ is small enough. Since $m \leq \frac{1}{p-1} = r+1$, we can see that $\dot{K}_t$ is almost surely bounded from below. Adjusting the constant $c$ if necessary, we obtain $\dot{K}_t \geq 0$ a.s.  \\

    \vspace{-8pt}

    \noindent \underline{Case $p=2$:} We adapt the argument of \citeauthor{Lasry_Lions_state_constraints} \cite{Lasry_Lions_state_constraints} with the slight difference that we need to consider the truncation $\ell$ of the logarithm function defined in \eqref{smooth_log_truncation} to address the possible unboundedness of $\delta$. Following the same steps as above, we obtain:
    \begin{align}
        \dot{K}_t &\geq -\frac{C_{\varepsilon}}{2 ( \delta_t - )^2} \big| U_t \big|^2 + \frac{C_{\varepsilon}}{ \delta_t - \eta }\Xi_t - f_t \Big( -C_{\varepsilon} \, \ell ( \delta_t - \eta ) + c \,  , \,  - \frac{C_{\varepsilon}}{\delta_t - \eta } U_t \Big) - C_{\varepsilon} L_{\log} \nonumber \\[6pt]
        &\geq \frac{\big| U_t \big|^2}{( \delta_t - \eta\big)^{2}} \Big\{ -\frac{ 1 }{2} C_{\varepsilon} + k  \,C_{\varepsilon}^2 \Big\} - \frac{C_{\varepsilon} L_{\Xi} (1 + |\delta_t|)}{\delta_t - \eta} + \lambda_0 C_{\varepsilon}  \, \ell (\delta_t - \eta ) + \lambda_0 c - \vartheta - C_{\varepsilon} L_{\log} \, . \nonumber \label{supersol_p=2}
    \end{align}
    Then, using the expression of $C_1$ in \eqref{c_0,C_0} we can choose the parameter $c>0$ so that this is nonnegative. \\

    \vspace{-8pt}

    (ii) We now establish that $\underline{w}^{\varepsilon, \eta}$ is a subsolution. Denote $c_{\varepsilon} \coloneqq C_0 - \varepsilon$. Once again, Itô's formula gives $\displaystyle d \underline{w}^{\varepsilon, \eta}_t = \underline{q}^{\varepsilon, \eta}_t \, dt + \underline{z}^{\varepsilon, \eta}_t \cdot dW_t$, where:
    \begin{equation*}
    \begin{array}{ccc}
        \displaystyle \underline{q}^{\varepsilon, \eta}_t \coloneqq \frac{1}{2} \big|U_t \big|^2 C_{\varepsilon}\Phi_p''(\delta_t + \eta) + C_{\varepsilon}\Phi'_p(\delta_t + \eta) \, \Xi_t & \text{and} & \displaystyle \underline{z}^{\varepsilon, \eta}_t \coloneqq C_{\varepsilon} \Phi'_p(\delta_t + \eta) \, U_t \, .
    \end{array}
    \end{equation*}
    It is clear that the integrability conditions of Definition \ref{singular_def} hold with $\eta_{\text{sub}} \coloneqq -\eta$, so we only need to prove that $\dot{K}_t \coloneqq - \underline{q}^{\varepsilon, \eta}_t - f_t( \underline{w}^{\varepsilon, \eta}_t, \underline{z}^{\varepsilon, \eta}_t) \leq 0$. \\

    \vspace{-5pt}
    
    \noindent \underline{Case $1 < p < 2$:} If $\delta_t \in [-\overline{\eta}, -\eta)$, we have, by the same arguments as above and Assumption \ref{driver_assumptions}:
    \begin{align}
        \dot{K}_t & \leq -\frac{r(r+1)c_{\varepsilon}}{2 ( \delta_t + \eta)^{r+2}} \sigma_0^2 + \frac{r c_{\varepsilon}}{( \delta_t + \eta )^{r+1}} | \Xi_t | + k_0 \frac{r c_{\varepsilon}}{( \delta_t +\eta )^{p(r+1)}} \sigma_1^p - \lambda_1 \Big| \frac{c_{\varepsilon}}{(\delta_t + \eta )^r} - \frac{c}{\lambda_1} \Big| + \vartheta \nonumber \\[6pt]
        & \leq -\frac{\nu \, \varepsilon}{( \delta_t - \eta )^{r+2}} + \frac{r  c_{\varepsilon} L_{\Xi} (1 + |\delta_t|^m)}{( \delta_t - \eta )^{r+1}} + \lambda_1 \frac{c_{\varepsilon}}{(\delta_t - \eta )^r} + c - \vartheta \nonumber \, ,
    \end{align}
    for some $\nu > 0$. We conclude that $\dot{K}_t \leq 0$ given the choice of $C_0$ and after adjusting the constant $c$. \\

    \vspace{-8pt}

    \noindent \underline{Case $p=2$:} Similarly, we can write:
    \begin{equation*}\label{subsol_p=2}
        \dot{K}_t \leq \frac{\big| U_t \big|^2}{\big( \delta_t + \eta\big)^{2}} \Big\{ -\frac{ 1 }{2} c_{\varepsilon} + K  \,c_{\varepsilon}^2 \Big\} + \frac{c_{\varepsilon} L_{\Xi} (1 + |\delta_t|)}{ \delta_t - \eta} + \lambda_1 \Big| c_{\varepsilon} \, \ell \big(\delta_t + \eta \big) + \frac{c}{\lambda_1} \Big| + C_{\varepsilon} L_{\log} \, .
    \end{equation*}
    Similarly to the previous cases, we can adjust the constant $c$ to obtian $\dot{K}_t \leq 0$.
\end{proof}

\subsection{Existence and estimates}\label{existence_prove_sec}

We now turn to the core proof of Theorem \ref{existence_thm}. For the first claim, we shall adapt arguments from the literature on quadratic backward SDEs, as started by \citeauthor{kobylanski} \cite{kobylanski}. See also \citeauthor{zhang_BSDE} \cite[Lemma~7.3.2]{zhang_BSDE}.

\begin{proof}[Proof of Theorem \ref{existence_thm}{\rm{-(i)}}]

    For any $R > 0$, let $(Y^R, Z^R)$ be the unique solution of\hspace{.5pt} ${\rm{BSDE}}_{R}({\rm{T}}^{\delta}, f)$. Since we have $Y^{R'} \leq Y^{R}$ for any $R > R'$ by Lemma \ref{comp_principle}, the process $Y \coloneqq \lim_{R \rightarrow \infty} Y^R$ is well-defined. Fix $\eta > 0$ and the stopping time ${\rm{T}}^{\delta}_{\eta}$ defined in \eqref{tau_eta}. By Remark \ref{bounded_below_rmk}, Proposition \ref{lasry_lions_estimates} and the comparison result of Lemma \ref{comp_principle}, we have:
    \begin{equation}\label{estimates_R_eta}
    \begin{array}{cc}
         \displaystyle \frac{({\rm{ess}}\inf f(0, 0))^-}{\lambda_0}  \; \leq \;   Y^{R}_{t} \; \leq \; (C_1 + \varepsilon) \Phi_p(\delta_t) + \frac{c}{\lambda_0}  \, , & \text{for all } \, t \leq {\rm{T}}^{\delta}_{\eta} \, , \, R > 0 \, .
    \end{array}
    \end{equation}
    Let $T > 0$. By \eqref{estimates_R_eta} and since $\Phi_p(\delta_t) \leq \Phi_p(\eta)$ if $t \leq {\rm{T}}^{\delta}_{\eta}$, we can see that $\big\|Y^{R}_{\wedge {\rm{T}}^{\delta}_{\eta}} \big\|_{\infty}$ is uniformly bounded in $R > 0$. Therefore, by Lemma \ref{BMO_lemma}, we obtain:
    \begin{equation*}\label{uniform_bounds_existence}
         \sup_{R > 0} \, \big\|Z^{R} \big\|_{\mathbb{H}^{2}_{T \wedge {\rm{T}}^{\delta}_{\eta}}}  < \infty \, .
    \end{equation*}
    Since $\mathbb{H}^{2}_{T \wedge {\rm{T}}^{\delta}_{\eta}}$ is a Hilbert space, $Z^{R}$ converges weakly up to a subsequence to a process $Z^{(\eta)}$ in $\mathbb{H}^{2}_{T \wedge {\rm{T}}^{\delta}_{\eta}}$. We organize the remaining arguments into four steps. \\

    \vspace{-8pt}

    \noindent \underline{\textit{Step 1:}} We first show that the convergence of $Z^{R}$ to $Z^{(\eta)}$ is strong in $\mathbb{H}^{2}_{T \wedge {\rm{T}}^{\delta}_{\eta}}$. Let $\gamma > 0$ and introduce:
    \begin{equation}\label{phi_ODE}
        \begin{array}{ccccc}
           \displaystyle \varphi(y) = \big( e^{2 \gamma y} - 2 \gamma y - 1) / (2 \gamma)^2 \, , & \text{so that} & \displaystyle  \varphi'(y) = \frac{e^{2\gamma y} - 1}{2 \gamma} & \text{and} & \displaystyle \varphi''(y) = 2\gamma \varphi'(y) + 1 \, .
        \end{array}
    \end{equation}
    Let $R > R'$ and denote $\Delta Y^{R,R'} = Y^{R}- Y^{R'}$, $\Delta Z^{R, R'} = Z^{R}-Z^{R'}$, $\Delta Y^{R'} = Y^{R'} - Y$ and $\Delta Z^{R'} = Z^{R'} - Z^{(\eta)}$. Drawing from Assumption \ref{driver_assumptions} and \citeauthor{kobylanski} \cite[Proposition~2.4]{kobylanski}, there exists $C > 0$ such that, for all $y,y' \in \mathbb{R}$ and $z,z',z'' \in \mathbb{R}^d$:
    \begin{equation}\label{estimates_f}
        \big| f_t (y, z) - f_t(y', z')  \big| \leq C \big( 1 + |y| + |y'| + |z-z'|^2 + |z'-z''|^2 + |z''|^2 \big) \, .
    \end{equation}
    Now, applying Itô's formula to $\varphi(\Delta Y^{R, R'}_0)$ between $0$ and $T \wedge {\rm{T}}^{\delta}_{\eta}$, one has:
    \begin{align}
       \hspace{-5pt} \varphi(\Delta Y^{R, R'}_0) = \varphi(\Delta Y^{R, R'}_{T\wedge {\rm{T}}^{\delta}_{\eta}}) &- \int_{0}^{T\wedge {\rm{T}}^{\delta}_{\eta}} \! \varphi'(\Delta Y_s^{R, R'}) \Delta Z_s^{R, R'} \cdot dW_s - \frac{1}{2} \int_{0}^{T\wedge {\rm{T}}^{\delta}_{\eta}} \! \varphi''(\Delta Y_s^{R, R'}) \big|\Delta Z_s^{R, R'} \big|^2  \, ds\nonumber \\[3pt]
        &+ \int_{0}^{T\wedge {\rm{T}}^{\delta}_{\eta}} \! \varphi'(\Delta Y^{R, R'}_s) \big[f_s \big(Y_s^{R}, Z_s^{R} \big) -f_s \big(Y_s^{R'}, Z_s^{R'} \big) \big] \, ds \, . \nonumber 
    \end{align}
    So using \eqref{phi_ODE}, \eqref{estimates_f} and taking the expectation, we obtain:
    \begin{align}
        \varphi(\Delta Y^{R, R'}_{0}&) \; \leq \;  \mathbb{E} \big[  \varphi(\Delta Y^{R, R'}_{T\wedge {\rm{T}}^{\delta}_{\eta}}) \big] -\frac{1}{2} \mathbb{E} \int_{0}^{T\wedge {\rm{T}}^{\delta}_{\eta}} \! \big[ 2 \gamma \varphi'(\Delta Y_s^{R, R'}) + 1 \big] |\Delta Z_s^{R, R'}|^2 \, ds \nonumber \\[3pt]
        & + \mathbb{E} \int_{0}^{T\wedge {\rm{T}}^{\delta}_{\eta}} \!  C \varphi'(\Delta Y^{R, R'}_s) \Big[ 1 + \big|Y^{R}_s \big| + \big|Y^{R'}_s \big| + \big|\Delta Z^{R, R'}_s \big|^2 + \big|\Delta Z^{R'}_s \big|^2 + \big|Z_s^{(\eta)} \big|^2 \Big] \, ds  \, . \nonumber
    \end{align}
    Choosing $\gamma = 2C$ and using that $\varphi'(\Delta Y^{R, R'}_s) \geq 0$ because $Y^R \geq Y^{R'}$, we can write after rearranging:
    \begin{align}
       \mathbb{E} \int_{0}^{T\wedge {\rm{T}}^{\delta}_{\eta}}& \Big(\gamma \varphi'(\Delta Y_s^{R, R'}) + 1\Big) \big|\Delta Z_s^{R, R'} \big|^2 \, ds - \mathbb{E} \int_{0}^{T\wedge {\rm{T}}^{\delta}_{\eta}} \! \gamma \varphi'(\Delta Y_s^{R, R'}) \big|\Delta Z_s^{R'} \big|^2 \, ds \nonumber \\ 
       &\leq \,  2 \, \mathbb{E} \big[ \varphi(\Delta Y^{R, R'}_{T\wedge {\rm{T}}^{\delta}_{\eta}}) - \varphi(\Delta Y^{R, R'}_0) \big] + \mathbb{E} \int_{0}^{T\wedge {\rm{T}}^{\delta}_{\eta}} \! \gamma \varphi'(\Delta Y^{R, R'}_s) \Big[ 1 + \big| Y^{R}_s \big| + \big|Y^{R'}_s \big| +  \big|Z_s^{(\eta)} \big|^2 \Big] \, ds    \, . \nonumber 
    \end{align}
    Let $V^{R, R'} \coloneqq \gamma \varphi'(\Delta Y^{R, R'}) + 1$ and $V^{R'} \coloneqq \gamma \varphi'(\Delta Y^{R'}) + 1$. Since $\sqrt{V^{R,R'}} \Delta Z^{R,R'}$ converges weakly to $\sqrt{V^{R'}} \Delta Z^{R'}$ in $\mathbb{H}^{2}_{T \wedge {\rm{T}}^{\delta}_{\eta}}$ as $R \to \infty$, one has:
    \begin{equation*}
        \liminf_{R \to \infty} \| \sqrt{V^{R,R'}} \Delta Z^{R,R'} \|_{\mathbb{H}^{2}_{T \wedge {\rm{T}}^{\delta}_{\eta}}} \geq \| \sqrt{V^{R'}} \Delta Z^{R'} \|_{\mathbb{H}^{2}_{T \wedge {\rm{T}}^{\delta}_{\eta}}} \, .
    \end{equation*}
    Combining this with the Dominated Convergence Theorem for the other terms when $R \to \infty$,
    \begin{align}
        \big\| \Delta Z^{R'} \big\|^2_{\mathbb{H}^{2}_{T \wedge {\rm{T}}^{\delta}_{\eta}}}   \leq \,  2 \, \mathbb{E} \big[ \varphi(\Delta Y^{R'}_{T\wedge {\rm{T}}^{\delta}_{\eta}}) - \varphi(\Delta Y^{R'}_0) \big] + \mathbb{E} \int_{0}^{T\wedge {\rm{T}}^{\delta}_{\eta}}  \gamma \varphi'(\Delta Y^{R'}_s) \Big[ 1 + \big| Y_s \big| + \big|Y^{R'}_s \big| +  \big|Z_s^{(\eta)} \big|^2 \Big] \, ds    \, . \nonumber
    \end{align}
    The right-hand side converges to 0 as $R' \to \infty$ by Dominated Convergence and we obtain the result. \\

    \vspace{-8pt}

    \noindent \underline{\textit{Step 2:}} We now prove that $Y$ is a continuous process. Let $T, \eta > 0$, then one has:
    \begin{equation*}
       \hspace{-10pt} \sup_{t \leq T\wedge {\rm{T}}^{\delta}_{\eta}} \big| \Delta Y^R_t \big| \; \leq \;  \big| \Delta Y^R_{T \wedge {\rm{T}}^{\delta}_{\eta}} \big| + \int_0^{T\wedge {\rm{T}}^{\delta}_{\eta}} \big| f_s(Y_s^{R}, Z_s^{R}) - f_s(Y_s, Z_s^{(\eta)}) \big| ds + \sup_{t \leq T \wedge {\rm{T}}^{\delta}_{\eta}} \Big|\int_t^{T \wedge {\rm{T}}^{\delta}_{\eta}} \Delta Z^{R}_s \cdot dW_s \Big| \, .
    \end{equation*}
    By the Burkhölder-Davis-Gundy inequality, there exists a constant $C_{\eta} > 0$ depending only on $\eta$ and such that:
    \begin{equation*}
        \mathbb{E} \Bigg[\sup_{t \leq T \wedge {\rm{T}}^{\delta}_{\eta}} \Big|\int_t^{T \wedge {\rm{T}}^{\delta}_{\eta}} \Delta Z^{R}_s \cdot dW_s \Big| \Bigg] \leq C_{\eta} \,  \mathbb{E} \int_0^{T \wedge {\rm{T}}^{\delta}_{\eta}} \big|\Delta Z^{R}_s \big|^2 \, ds \, .
    \end{equation*}
    We conclude from Step 1 that $ \lim_{R \rightarrow \infty}\mathbb{E} \Big[ \,  \sup_{t \leq T \wedge {\rm{T}}^{\delta}_{\eta}}  \big| \Delta Y^R_t \big| \,  \Big] = 0$. In particular, $Y^{(\eta)}$ is an almost surely continuous process. \\

    \vspace{-8pt}
    
    \noindent \underline{\textit{Step 3:}} Notice that $Z_t^{(\eta)} = Z_t^{(\eta')}$ on $\{ {\rm{T}}^{\delta}_{\eta} \geq t \}$ if $\eta' > \eta$. As ${\rm{T}}^{\delta}_{\eta} \rightarrow {\rm{T}}^{\delta}$, we may define the process $Z_t \coloneqq Z^{(\eta)}_t$ on $\{ {\rm{T}}^{\delta}_{\eta} \geq t \}$ for all $\eta > 0$. Let us show that $(Y,Z)$ is a solution of the BSDE \eqref{BSDE_LL}. For all $0 \leq t \leq T$, one has:

    \vspace{-8pt}
    
    \begin{equation*}
        Y^{R}_{t \wedge {\rm{T}}^{\delta}_{\eta}} = Y^{R}_{T\wedge {\rm{T}}^{\delta}_{\eta}} - \int_{t\wedge {\rm{T}}^{\delta}_{\eta}}^{T\wedge {\rm{T}}^{\delta}_{\eta}} f_s \big( Y^{R}_s, Z^{R}_s \big) ds - \int_{t\wedge {\rm{T}}^{\delta}_{\eta}}^{T\wedge {\rm{T}}^{\delta}_{\eta}} Z^{R}_s \cdot dW_s \, . 
    \end{equation*}
    Letting $R \rightarrow \infty$ and applying the Dominated Convergence Theorem, we obtain:
    \begin{equation*}
        Y_{t \wedge {\rm{T}}^{\delta}_{\eta}} = Y_{T \wedge {\rm{T}}^{\delta}_{\eta}} - \int_{t\wedge {\rm{T}}^{\delta}_{\eta}}^{T\wedge {\rm{T}}^{\delta}_{\eta}} f_s(Y_s, Z_s)  ds - \int_{t\wedge {\rm{T}}^{\delta}_{\eta}}^{T\wedge {\rm{T}}^{\delta}_{\eta}} Z_s \cdot dW_s \, .
    \end{equation*}
    Finally, it follows from Lemma \ref{comp_principle} that, for all $\varepsilon > 0$ and $\eta \in [0, \overline{\eta})$, $Y \geq \underline{w}^{\varepsilon, \eta}$ for $t \in [0, {\rm{T}}^{\delta}]$. Combining with \eqref{estimates_R_eta}, we obtain the estimate \eqref{bounds_y}, which implies that $\lim_{\eta \rightarrow 0} Y_{{\rm{T}}^{\delta}_{\eta}} = +\infty$ a.s. \hspace{-11pt} on $\{ {\rm{T}}^{\delta} < \infty \}$, so $(Y,Z)$ solves\hspace{.5pt} ${\rm{BSDE}}_{\infty}({\rm{T}}^{\delta}, f)$ in the sense of Definition \ref{singular_def}.  \\

    \vspace{-8pt}

    \noindent \underline{\textit{Step 4:}} Let $(Y',Z')$ be another solution to \eqref{BSDE_LL} in the sense of Definition \ref{singular_def}, then the comparison result of Lemma \ref{comp_principle} gives $Y^R \leq Y'$ a.s. \hspace{-7pt} for all $R$. Then taking the limit $R \rightarrow \infty$ we obtain $Y \leq Y'$. Therefore $(Y, Z)$ is the minimal solution of\hspace{.5pt} ${\rm{BSDE}}_{\infty}({\rm{T}}^{\delta}, f)$.
\end{proof}

We now establish the second claim of Theorem \ref{existence_thm}. The proof is based on arguments from \citeauthor{popier_random_time} \cite[Proposition~4]{popier_random_time}.

\begin{proof}[Proof of Theorem \ref{existence_thm}{\rm{-(ii)}}]
Let $\kappa \in \mathcal{T}_0^{\theta}$ and $\eta, T > 0$. By \eqref{delta_BMO}, we can assume without loss of generality that $\delta \in \mathbb{S}^{\infty}_{\theta}$. Denoting $\tau_{\eta, T} \coloneqq \theta \wedge T \wedge {\rm{T}}^{\delta}_{\eta}$, we want to show that:
\begin{equation*}
        \mathsf{A}_{\eta, T}^{\kappa} \coloneqq \int_{\kappa}^{\tau_{\eta, T}} |Z_s|^2 \delta_s^{2 (r + \gamma)} ds \leq M \, ,
    \end{equation*}
for some constant $M > 0$ independent of $\kappa$, $\eta$ and $T$. In the rest of the proof, $M$ denotes a constant which may change from line to line. By Remark \ref{bounded_below_rmk}, note that $\overline{Y} \coloneqq Y - \frac{\overline{m}}{\lambda_0}  \geq 0$, where $\overline{m} = \frac{({\rm{ess}} \inf f(0,0) )^-}{\lambda_0}$.  Applying the Itô's formula to $\overline{Y}^2 \delta^{2(r + \gamma)}$ on $[\kappa, \tau_{\eta, T}]$, we obtain:
    \begin{align}
        \mathsf{A}_{\eta, T}^{\kappa} &+ \mathsf{B}_{\eta, T}^{\kappa} + \mathsf{C}_{\eta, T}^{\kappa} = \overline{Y}_{\tau_{\eta, T}}^2 \delta_{\tau_{\eta, T}}^{2 (r + \gamma)} - \overline{Y}_{\kappa}^2 \delta_{\kappa}^{2(r + \gamma)} - 2 \int_{\kappa}^{\tau_{\eta, T}} \overline{Y}_s \delta_s^{2(r + \gamma)} Z_s \cdot dW_s \nonumber \\[6pt]
        &- (2 r + \gamma) \int_{\kappa}^{\tau_{\eta, T}} \overline{Y}_s^2 \delta_s^{2 (r + \gamma)-1} \big[ \Xi_s ds + U_s \cdot dW_s \big] \nonumber - (r + \gamma) (2 (r + \gamma) -1) \int_{\kappa}^{\tau_{\eta, T}} \overline{Y}_s^2 \delta_s^{2 (r + \gamma -1)} | U_t |^2 ds \, , \nonumber
    \end{align}
    where we denoted:
    \begin{equation*}
        \begin{array}{lcr}
             \displaystyle \mathsf{B}_{\eta, T}^{\kappa} \coloneqq -2 \int_{\kappa}^{\tau_{\eta, T}} \overline{Y}_s \delta_s^{2 (r + \gamma)} f_s(Y_s , Z_s) ds \, , 
        & \text{and} & \displaystyle \mathsf{C}_{\eta, T}^{\kappa} \coloneqq  4(r + \gamma) \int_{\kappa}^{\tau_{\eta, T}} \overline{Y}_s \delta_s^{2(r + \gamma)-1} Z_s \cdot U_s ds \, .
        \end{array}
    \end{equation*}
    Since $\delta$ and $U$ are bounded on $[0, \theta]$ and $(\overline{Y}, Z) \in \mathbb{S}^{\infty}_{T \wedge {\rm{T}}^{\delta}_{\eta}} \times \mathbb{H}^2_{T \wedge {\rm{T}}^{\delta}_{\eta}}$, the Itô integrals are martingales. Moreover, $\Xi$ and $\overline{Y} \Phi_p(\delta)^{-1} $ belong to $ \mathbb{S}^{\infty}_{\theta}$ from the estimate \eqref{bounds_y}. Therefore, taking the expectation conditional to $\mathcal{F}_{\kappa}$ and rearranging the terms, we obtain:
    \begin{align}\label{finiteness_green}
        \mathbb{E}_{\kappa} \big[ \mathsf{A}_{\eta, T}^{\kappa} + \mathsf{B}_{\eta, T}^{\kappa} + \mathsf{C}_{\eta, T}^{\kappa} \big] \, \leq \,  M \Big( 1 + \mathbb{E}_{\kappa} \int_{\kappa}^{\theta} \big(\Phi_p(\delta) \delta_s^{r+ \gamma - 1} \big)^2 ds  \Big) \, \leq \, M \big( 1 +  \| \Phi_p(\delta) \delta_s^{r+ \gamma - 1} \|_{{\rm{BMO}}, \theta}^2 \big)  \, . \nonumber
    \end{align}
    Now we deal with the two terms $\mathsf{B}_{\eta, T}^{\kappa}$ and $\mathsf{C}_{\eta, T}^{\kappa}$. For $\mathsf{C}_{\eta, T}^{\kappa}$, the Cauchy-Schwarz inequality gives:
    \begin{align}
         \mathbb{E}_{\kappa} [\mathsf{C}_{\eta, T}^{\kappa}] \geq   - 4(r + \gamma) \Bigg( \mathbb{E}_{\kappa} \int_{\kappa}^{\tau_{\eta, T}} \! \overline{Y}_s^2 \delta_s^{2(r + \gamma-1)} |U_s|^2 ds \Bigg)^{\frac{1}{2}} \Bigg( \mathbb{E}_{\kappa} \int_{\kappa}^{\tau_{\eta, T}} \! |Z_s|^2 \delta_s^{2(r + \gamma)} ds \Bigg)^{\frac{1}{2}}  \geq  - M \, \mathbb{E}_{\kappa}[\mathsf{A}_{\eta, T}^{\kappa}]^{\frac{1}{2}} \, . \nonumber
    \end{align}
    For the term $\mathsf{B}_{\eta, T}^{\kappa}$, using the growth of $f$ in $y$ and $z$, \textit{i.e.} Assumption \eqref{p_growth}, and the fact that $\overline{Y} \geq 0$, we can write
    \begin{align}
        \mathbb{E}_{\kappa}[\mathsf{B}_{\eta, T}^{\kappa}] \, \geq \,  2 \mathbb{E}_{\kappa} \int_{\kappa}^{ \tau_{\eta, T}} \overline{Y}_s \delta_s^{2(r + \gamma)} \big\{ k_1 \big|Z_s\big|^p  - \lambda_1 |Y_s| - \vartheta \big\}  ds \, \geq \,  - M \Big( 1 +  \| \Phi_p(\delta) \delta_s^{r+ \gamma} \|_{{\rm{BMO}}, \theta}^2 \Big) \, ,  \nonumber
    \end{align}
    Finally, we obtain a system of inequalities of the form:
    \begin{equation*}
    \begin{array}{ccc}
         \mathbb{E}_{\kappa}[\mathsf{A}_{\eta, T}^{\kappa}] \geq 0 & \text{and} & \mathbb{E}_{\kappa}[\mathsf{A}_{\eta, T}^{\kappa}] - M - M \, \mathbb{E}_{\kappa}[\mathsf{A}_{\eta, T}^{\kappa}]^{\frac{1}{2}} \leq \mathbb{E}_{\kappa} \big[ \mathsf{A}_{\eta, T}^{\kappa} + \mathsf{B}_{\eta, T}^{\kappa} + \mathsf{C}_{\eta, T}^{\kappa} \big] \leq M \, .
    \end{array}
    \end{equation*}
    By Young's inequality, we deduce that $\mathbb{E}_{\kappa}[\mathsf{A}_{\eta, T}^{\kappa}] \leq M$, once we adjust the constant $M$. Note that $M$ is independent of $\kappa$, $\eta$ and $T$. Applying Fatou's Lemma,
    \begin{equation*}
       \mathbb{E}_{\kappa} \int_{\kappa}^{\theta} |Z_s|^2 \delta_s^{2(r + \gamma)} ds \leq \liminf_{\eta \rightarrow 0} \liminf_{T \to \infty} \,  \mathbb{E}_{\kappa}[\mathsf{A}_{\eta, T}^{\kappa}] \leq M \, ,
    \end{equation*}
    which proves that $Z \delta^{r+\gamma} \in \mathbb{H}^{\rm{BMO}}_{\theta}$.
\end{proof}

\begin{rmk}
    The \rm{BMO} estimate on $Z$ is natural. Indeed, it is satisfied for sub- and supersolutions of the form $(\overline{w}_t, \overline{z}_t) = \big( C \Phi_p(\delta) \pm c \, , \, C \Phi'_p(\delta) \big)$, since $\| \overline{z} \delta^{r + \gamma} \|_{{\rm{BMO}}, \theta} \leq  C \| \delta^{\gamma - 1} \|_{{\rm{BMO}}, \theta} < \infty$.
\end{rmk}

\subsection{Uniqueness of solutions}\label{uniqueness_control_sec}

This section is devoted to the proof of Theorem \ref{uniqueness_thm} on the uniqueness of solutions of \eqref{BSDE_LL}. We suppose that Assumptions \ref{d_assumptions} and \ref{driver_assumptions} hold.

\vspace{5pt}

\begin{proof}[Proof of Theorem \ref{uniqueness_thm}]
    We proceed in two steps by adapting the arguments of \citeauthor{Lasry_Lions_state_constraints} \cite[Theorem~II.1]{Lasry_Lions_state_constraints} and \cite[Theorem~III.4]{Lasry_Lions_state_constraints} to our framework. Denote by $Y^{\text{min}}$ the minimal solution constructed in Theorem \ref{existence_thm}. First we identify a maximal solution $Y^{\text{max}}$ which has the same asymptotic behavior \eqref{asymptotics_y} as $Y^{\rm{min}}$. Then we use the concavity of $f$ and our comparison result to conclude that $Y^{\text{max}} \leq Y^{\text{min}}$, which implies the uniqueness of solutions. \\

    \vspace{-8pt}

    \noindent \underline{\textit{Step 1:}} Fix $\eta \in (0, \delta_0)$ and let ${\rm{T}}^{\delta}_{\eta} = \inf \big \{ t \geq 0 : \delta^{\eta}_t \leq 0 \big \}$ with $\delta^{\eta}_t \coloneqq \delta_t - \eta$. As $\delta^{\eta}$ also satisfies Assumption \ref{d_assumptions}, we deduce from Theorem \ref{existence_thm} the existence of a minimal solution $\big( \overline{Y}^{\eta}, \overline{Z}^{\eta} \big)$ of ${\rm{BSDE}}_{\infty}({\rm{T}}^{\delta}_{\eta}, f)$ satisfying $\underline{w}^{\varepsilon, -\eta} \leq \overline{Y}^{\eta} \leq \overline{w}^{\varepsilon, \eta}$ a.s. By the comparison result of Lemma \ref{comp_principle}, $Y' \leq \overline{Y}^{\eta}$ and the sequence $(\overline{Y}^{\eta})$ is non-decreasing, \textit{i.e.} $\overline{Y}^{\eta} \leq \overline{Y}^{\eta'}$ when $\eta < \eta'$. Then $Y' \leq Y^{\text{max}} \coloneqq \lim_{\eta \rightarrow 0} \overline{Y}^{\eta}$. With an argument similar to the proof of Theorem \ref{existence_thm}, we obtain a process $Z^{\text{max}}$ such that $(Y^{\text{max}}, Z^{\text{max}})$ is the maximal solution to the BSDE \eqref{BSDE_LL}. To sum up, there exists $c_{\varepsilon} > 0$ such that:
    \begin{equation}\label{bounds_y_max}
        \begin{array}{cc}
          \displaystyle (C_0 - \varepsilon) \Phi_p(\delta_t) - c_{\varepsilon} \leq Y^{\text{min}} \leq Y' \leq Y^{\text{max}} \leq (C_1 - \varepsilon) \Phi_p(\delta_t) + c_{\varepsilon} \,, & \text{a.s.}
    \end{array}
    \end{equation}

    \noindent \underline{\textit{Step 2:}} We now show that $Y^{\text{max}} \leq Y^{\text{min}}$. Let $\Tilde{C_0}, \Tilde{C_1}$ be constants satisfying $\Tilde{C_0} < C_0 \leq C_1 < \Tilde{C_1}$. Then by \eqref{bounds_y_max}, there exist $c> 0$ and $\theta \in (0, 1)$ such that:
    \begin{equation*}
       \theta (Y^{\text{max}} + 1+ c ) \, \leq \, \theta ( \Tilde{C_1} \Phi_p(\delta_t) + 1+ 2c ) \, \leq \,  \Tilde{C_0} \Phi_p(\delta_t) + 1 \, \leq \,  Y^{\text{min}} + 1 + c \,.
    \end{equation*}
    Let $\eta_0 > 0$. By Proposition \ref{lasry_lions_estimates}, we may choose $c$ large enough so that the processes
    \begin{equation*}
    \begin{array}{cc}
         \displaystyle (y^{\eta}_t \, , \, z^{\eta}_t) \coloneqq \big(\Tilde{C_0} \Phi_p(\delta_t + \eta) - c  \, , \, \Tilde{C_0} \Phi_p'(\delta_t + \eta) \big) \, , & \text{ for } \, \eta \in (0, \eta_0) \,,
    \end{array}
    \end{equation*}
    are subsolutions of ${\rm{BSDE}}_{\infty}({\rm{T}}^{\delta}, f)$. Now define $\theta_0 \in (0, 1)$ by:
    \begin{equation*}
        \theta_0 \coloneqq \sup \Big \{ \theta \in (0, 1] : \theta Y^{\text{max}} \leq Y^{\text{min}} + (1-\theta)(1+c) \Big\} \, .
    \end{equation*}
    We will show that $\theta_0 = 1$, which concludes the proof. By contradiction, we assume that $\theta_0 < 1$. Then, for $\theta \in (0, \theta_0)$, we define:
    \begin{equation}\label{y_theta_def}
    \begin{array}{ccc}
         \displaystyle Y^{\theta, \eta} \coloneqq \theta \, Y^{\text{max}} + (1- \theta) y^{\eta} & \text{and} & \displaystyle Z^{\theta, \eta} \coloneqq \theta Z^{\text{max}} + (1-\theta) z^{\eta} \, .
    \end{array}
    \end{equation}
    Since $f$ is concave in $z$, we conclude that $(Y^{\theta, \eta}, Z^{\theta, \eta})$ is a subsolution of ${\rm{BSDE}}_{\infty}({\rm{T}}^{\delta}, f)$. Indeed, denoting
    \begin{align}
        K^{\theta, \eta}_t \coloneqq Y^{\theta, \eta}_0 - Y^{\theta, \eta}_t - \int_0^t f_s(Y^{\theta, \eta}_s, Z^{\theta, \eta}_s) \, ds + \int_0^t Z^{\theta, \eta}_s \cdot dW_s \,, \nonumber
    \end{align}
    we obtain $\dot{K}^{\theta, \eta}_t \leq \theta f_t(Y_t^{\text{max}}, Z_t^{\text{max}}) + (1-\theta) f_t(y^{\eta}, z^{\eta}) - f_t(Y^{\theta, \eta}, Z^{\theta, \eta}) \leq 0$. Notice moreover that:
    \begin{equation*}
    \begin{array}{cc}
         \displaystyle Y^{\theta, \eta}_{{\rm{T}}^{\delta}_{\eta'}} \leq Y^{\text{min}}_{{\rm{T}}^{\delta}_{\eta'}} - (\theta_0 - \theta) Y^{\text{max}}_{{\rm{T}}^{\delta}_{\eta'}} + (1-\theta) \big( y^{\eta}_{{\rm{T}}^{\delta}_{\eta'}} + 1 + c \big) \,, & \text{ for all } \eta' > 0 \, .
    \end{array}
    \end{equation*}
    Since $\theta < \theta_0$, the right-hand side is nonpositive for $\eta'$ small enough by \eqref{bounds_y_max}. We deduce by Lemma \ref{comp_principle} that $Y^{\theta, \eta}_{t} \leq Y_{t}^{\text{min}}$ for all $t \leq {\rm{T}}^{\delta}_{\eta'}$, and then $Y^{\theta, \eta} \leq Y^{\text{min}}$ for all $\eta \in (0, \eta_0)$ and $\theta \in (0, \theta_0)$. By the same argument as above, there exists $\nu \in (0, 1)$ such that $\nu (Y^{\text{max}} + 1 + c) \leq \Tilde{C_0} \Phi_p(\delta) + 1$. Then, sending $\theta \to \theta_0$ and $\eta \to 0$, this yields:
    \begin{equation*}
       \big(\theta_0 + (1-\theta_0) \nu \big) Y^{\text{max}}_t \,\leq \,  \theta_0 \, Y^{\text{max}}_t + (1- \theta_0) (\Tilde{C_0} \Phi_p(\delta_t) +1) \,\leq \, Y^{\text{min}}_t + \big(1-\theta_0-(1-\theta_0)\nu \big) (1+c) \,.
    \end{equation*}
    This contradicts the definition of $\theta_0$, which ends the proof.
\end{proof}

\section{Connection with optimal control}\label{Control_sec}

In this section, we prove a verification argument which describes the minimal solution $(Y, Z)$ of the BSDE \eqref{BSDE_LL} as the value function of a stochastic optimal control problem. Recall that the set of admissible processes $\mathcal{A}$ was introduced in Definition \ref{A_def}, and recall that $p' = \frac{p}{p-1}$. The next result presents some basics properties of the cost function $g$.

\begin{lemma}\label{g_basics_lemma}
    Under Assumption {\rm{\ref{control_assumption}}}, $g$ is bounded from below, and $a \mapsto g_t(a)$ is of class $C^1$ and satisfies, for all $a, a' \in \mathbb{R}^d$,
    \begin{equation*}
        \begin{array}{cc}
             \displaystyle g_t(a')  \ge g_t(a) + \nabla_{\!a} g_t(a) \cdot (a'-a) + \frac{p-1}{p^{p'} \nu^{\frac{1}{p-1}}} |a'-a|^{p'} \, , & d\mathbb{P} \otimes dt- \text{a.e.}
        \end{array}
    \end{equation*}
    Moreover, $\hat{\alpha}_t \coloneqq \nabla_{\! z} H_t(Z_t)$ is the unique $\mathbb{F}$-measurable process satisfying $g_t(\hat{\alpha}_t) = \hat{\alpha}_t \cdot Z_t - H_t(Z_t)$.
\end{lemma}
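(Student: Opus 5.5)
The plan is to work pointwise in $(\omega,t)$ and use convex duality for the map $z\mapsto H_t(z)$. By Assumption \eqref{g_C^1}, $H_t$ is convex and $C^1$. By Assumption \eqref{p_growth} applied to $f_t(0,z)=-H_t(z)$, it satisfies $k_1|z|^p-\vartheta\le H_t(z)\le k_0|z|^p+\vartheta$.

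\emph{Lower bound.} Taking $z=0$ in the definition gives $g_t(a)\ge -H_t(0)\ge-\vartheta$, so $g$ is bounded from below.

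\emph{Finiteness and the key identity.} Since $H_t$ grows superlinearly ($p>1$), $g_t$ is finite everywhere and convex. The supremum defining $g_t(a)$ is attained, and maximizers are exactly the $z$ with $a=\nabla H_t(z)$. In particular, for any $z$ and $a=\nabla H_t(z)$ we get the Fenchel identity $g_t(a)=a\cdot z-H_t(z)$, and $z\in\partial g_t(a)$.

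\emph{Strong convexity of $g_t$.} The $p$-smoothness inequality of Assumption \eqref{g_strong_convex} is the standard dual counterpart of uniform convexity of order $p'$. Fix $a,a'$ and choose $z\in\partial g_t(a)$, so that $a=\nabla H_t(z)$ and $g_t(a)=a\cdot z-H_t(z)$. For any $w$,
\[
g_t(a')\ge a'\cdot(z+w)-H_t(z+w)\ge a'\cdot(z+w)-H_t(z)-a\cdot w-\nu|w|^p .
\]
The right-hand side equals $g_t(a)+z\cdot(a'-a)+(a'-a)\cdot w-\nu|w|^p$. Optimizing over $w$ gives the conjugate of $\nu|\cdot|^p$ evaluated at $a'-a$, namely $\frac{p-1}{p^{p'}\nu^{1/(p-1)}}|a'-a|^{p'}$, which is the claimed constant. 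This yields the inequality with $z$ in place of $\nabla_a g_t(a)$.

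\emph{Differentiability of $g_t$.} The inequality just obtained makes $g_t$ strictly convex, so $\nabla H_t$ is injective: if $\nabla H_t(z)=\nabla H_t(z')=a$ with $z\neq z'$, the maximizing set would be a segment on which $H_t$ is affine, which $p$-smoothness does not by itself exclude. I expect this to be the main obstacle. The route I would take is to argue via differentiability of $g_t$ directly, showing that $\partial g_t(a)$ is a singleton. Suppose $z_1,z_2\in\partial g_t(a)$. Then for every $a'$ the strong convexity inequality holds with both $z_1$ and $z_2$. Applying it to $a'=a+s e$ with $s\to0$, the $|s|^{p'}$ remainder is $o(s)$ because $p'>1$. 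This squeeze forces $g_t(a+se)-g_t(a)\ge s\,z_i\cdot e+o(s)$ for both signs of $s$ and both $i$. Combined with the convexity upper bound from the subdifferential, these inequalities give $z_1\cdot e=z_2\cdot e$ for all directions $e$. Actually the cleanest way is to dualize the other inequality: $p$-smoothness of $H$ implies $g(a')\ge g(a)+z\cdot(a'-a)+c|a'-a|^{p'}$ only for $z\in\partial g(a)$, while convex duality (the Fenchel–Young equality for $H$) also gives the upper estimate $g_t(a')\le g_t(a)+z\cdot(a'-a)+o(|a'-a|)$. This upper estimate comes from the superlinear coercivity of $H$: maximizers stay bounded locally, and the maximizer map $a\mapsto\partial g_t(a)$ is outer semicontinuous. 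Once $\partial g_t(a)$ is shown to be a singleton everywhere, a finite convex function is $C^1$, and the inequality holds with $\nabla_a g_t(a)$.

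\emph{Uniqueness of the feedback.} Set $\hat\alpha_t=\nabla_z H_t(Z_t)$. This process is progressively measurable because $\nabla H$ is jointly measurable, being a limit of difference quotients. By the Fenchel identity above, it satisfies $g_t(\hat\alpha_t)=\hat\alpha_t\cdot Z_t-H_t(Z_t)$. Conversely, if $a$ satisfies $g_t(a)=a\cdot Z_t-H_t(Z_t)$, then $Z_t$ maximizes $z\mapsto a\cdot z-H_t(z)$. The first-order condition for this concave $C^1$ function then gives $a=\nabla H_t(Z_t)=\hat\alpha_t$. Uniqueness therefore follows directly from the differentiability of $H$, without needing the harder step above.
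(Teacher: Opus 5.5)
\textbf{Overall.} Most of your proposal is sound, but the differentiability step has a genuine gap, and in fact it cannot be closed under the stated hypotheses.

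The correct parts are these. The lower bound $g_t(a)\ge -H_t(0)\ge-\vartheta$ works; it is simpler than the paper's coercive bound obtained from $H_t\le k_0|z|^p+\vartheta$, and it is enough for the statement. Your derivation of the $p'$-uniform convexity inequality is right, including the constant. The identification and uniqueness of $\hat\alpha_t$ through the first-order condition for $z\mapsto a\cdot z-H_t(z)$ is also right.

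\textbf{Why the differentiability arguments fail.} Your squeeze argument cannot show that $\partial g_t(a)$ is a singleton. The inequalities $g_t(a+se)-g_t(a)\ge s\,z_i\cdot e+c|s|^{p'}$ are lower bounds only, and subgradients never supply an upper bound. Two subgradients with $z_1\cdot e\neq z_2\cdot e$ are compatible with all of these inequalities.

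Your second route postulates $g_t(a')\le g_t(a)+z\cdot(a'-a)+o(|a'-a|)$. That is exactly differentiability of $g_t$ at $a$, so it is circular. Local boundedness of maximizers and outer semicontinuity of $\partial g_t$ do not yield it.

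The implication ``$g_t$ strictly convex, so $\nabla H_t$ injective'' is also backwards. Uniform convexity of $g_t$ is dual to the smoothness of $H_t$. Injectivity of $\nabla H_t$ is dual to differentiability of $g_t$, i.e.\ to strict convexity of $H_t$.

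\textbf{A counterexample.} The obstruction you flagged does occur under the stated hypotheses. Take $p=2$ and $H_t(z)=\tfrac{1}{2}(|z|-1)_+^2$, which is half the squared distance to the closed unit ball. It is convex and $C^1$ with $1$-Lipschitz gradient, so it is $p$-smooth with $\nu=\tfrac{1}{2}$. It also satisfies $\tfrac{1}{4}|z|^2-\tfrac{1}{2}\le H_t(z)\le\tfrac{1}{2}|z|^2$. Hence $f_t(y,z)=-H_t(z)-\lambda y$ meets all the conditions on the generator in Assumptions \ref{driver_assumptions} and \ref{control_assumption}. Yet $\nabla H_t$ vanishes on the whole unit ball, and $g_t(a)=|a|+\tfrac{1}{2}|a|^2$. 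This is not differentiable at $a=0$, where $\partial g_t(0)$ is the closed unit ball.

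The paper's proof only says these claims are ``standard consequences of the convexity and $p$-smoothness of $H$'', which does not address this. A $C^1$ conjugate needs an extra hypothesis such as strict convexity of $H_t$. Then $\nabla H_t$ is injective, $\partial g_t$ is single-valued, and a finite convex function with single-valued subdifferential is $C^1$.

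\textbf{What you did prove.} Without that hypothesis, you established the inequality with $\nabla_{\!a}g_t(a)$ replaced by any $z\in\partial g_t(a)=\{z:\nabla H_t(z)=a\}$. That is the correct statement, and it is all that is used later. It is applied with $a=\hat\alpha_t$ and $z=Z_t$ in Lemma \ref{alpha_eta}. It also enters through strict convexity of $g_t$ in Step 3 of the proof of Theorem \ref{verif_arg}.
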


\begin{proof}
    For the first claim, notice that:
    \begin{equation}\label{g_bounded_below}
    \begin{array}{cc}
         \displaystyle g_t(a) \, \geq \,  \sup_{z \in \mathbb{R}^d} \{ a \cdot z - \kappa_0 |z|^p - \vartheta  \} \, \geq \, \frac{p^{p'}}{p'}\kappa_0^{1-p'} |a|^{p'} - \vartheta  \, , & \text{for all } \, a \in \mathbb{R}^d \, .
    \end{array}
    \end{equation}
    The other two claims are standard consequences of the convexity and $p$-smoothness of $H$.
\end{proof}

The fact that $\hat{\alpha}$ defined in Lemma \ref{g_basics_lemma} is an admissible process is not completely straightforward. First of all, note that $\theta_{\eta} \coloneqq {\rm{T}}^{\delta}_{\eta} \wedge \eta^{-1}$ is a bounded stopping time such that $Y_{\theta_{\eta}}$ is bounded and $\theta_{\eta} \rightarrow {\rm{T}}^{\delta}$ a.s. \hspace{-7pt} as $\eta \rightarrow 0$.

\begin{lemma}\label{alpha_eta}
    For all $\eta > 0$, $\hat{\alpha} \in \mathbb{H}^{\text{BMO}}_{\theta_{\eta}}$ and there exists a unique probability distribution $\mathbb{P}^{\hat{\alpha}, \eta}$ equivalent to $\mathbb{P}$ such that $W^{\hat{\alpha}, \eta} = W_t +  \int_0^t \hat{\alpha}_s \mathbbm{1}_{\{\theta_{\eta} > s\}} \, ds$ is a $\mathbb{P}^{\hat{\alpha}, \eta}$-Brownian motion.
\end{lemma}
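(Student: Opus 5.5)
The plan is to show first that $Z \in \mathbb{H}^{\rm BMO}_{\theta_\eta}$, then transfer this to $\hat{\alpha}$ through a growth bound on $\nabla_z H$, and finally apply Kazamaki's criterion of Remark \ref{rmk:BMO} to build the measure.

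\emph{Step 1: $Z$ is BMO on $[0,\theta_\eta]$.} The solution $(Y,Z)$ restricted to $[0,\theta_\eta]$ solves ${\rm BSDE}_{Y_{\theta_\eta}}(\theta_\eta, f)$, which has a bounded terminal value. By the estimate \eqref{bounds_y}, $Y$ is bounded on $[0,{\rm T}^\delta_\eta]$: it lies between $\frac{\overline m}{\lambda_0}$ and $(C_1+\varepsilon)\Phi_p(\eta)+c/\lambda_0$. Since $\theta_\eta \le \eta^{-1}$, we have
\[
\sup_{\kappa}\|\mathbb{E}_\kappa[\theta_\eta-\kappa]\|_\infty \le \eta^{-1}.
\]
By the uniqueness part of Lemma \ref{BMO_lemma}, $(Y,Z)$ coincides on $[0,\theta_\eta]$ with the solution constructed there. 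Estimate \eqref{sigma_BMO} then gives $Z\in\mathbb{H}^{\rm BMO}_{\theta_\eta}$. Alternatively, one can rerun the exponential Itô argument from the proof of Lemma \ref{BMO_lemma} directly on $(Y,Z)$. Here $\|Y\|_{\infty}$ on $[0,\theta_\eta]$ is finite, the stochastic integral is a true martingale because $Z\in\mathbb{H}^{2,\rm loc}$, and Assumption \eqref{f_prog_meas} supplies the quadratic bound on $f$.

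\emph{Step 2: growth of $\nabla_z H$.} This is the main technical point. I will show $|\nabla_z H_t(z)| \le C(1+|z|^{p-1})$, and hence $|\hat\alpha|\le C(1+|Z|)$. First, apply the $p$-smoothness in Assumption \eqref{g_strong_convex} with $z' = z + s e$, where $|e|=1$ and $s>0$:
\[
s\,\nabla H_t(z)\cdot e \;\ge\; H_t(z+se) - H_t(z) - \nu s^p .
\]
Then use the two-sided growth $k_1|z|^p - \lambda_1|y| - \vartheta \le H_t(z)\le k_0|z|^p+\vartheta$, which follows from Assumption \eqref{p_growth} at $y=0$ since $f=-H-\lambda y$. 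Choosing $e = -\nabla H/|\nabla H|$ and $s = |z|+1$ yields a lower bound on $-|\nabla H_t(z)|$, that is, an upper bound
\[
|\nabla H_t(z)|\lesssim \frac{k_0(|z|+s)^p + \nu s^p + 2\vartheta}{s} \lesssim 1+|z|^{p-1}.
\]
(If convenient, the bound $|\nabla H(z)|\le C(1+|z|)$ also follows from the Lipschitz-type condition \eqref{f_prog_meas}.) Since $p-1\le 1$, we get $|\hat\alpha_t|^2 \le C(1+|Z_t|^2)$. Consequently
\[
\mathbb{E}_\kappa\int_\kappa^{\theta_\eta}|\hat\alpha_s|^2ds \le C(\eta^{-1}+\|Z\|^2_{\rm BMO,\theta_\eta}),
\]
so $\hat\alpha\in\mathbb{H}^{\rm BMO}_{\theta_\eta}$.

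\emph{Step 3: the measure.} By Remark \ref{rmk:BMO} (Kazamaki), $\mathcal{E}\big(-\int_0^{\cdot}\hat\alpha_s\mathbbm{1}_{\{\theta_\eta>s\}}\cdot dW_s\big)$ is a uniformly integrable martingale. It is strictly positive, and it is constant after the bounded time $\theta_\eta$. Define $d\mathbb{P}^{\hat\alpha,\eta}/d\mathbb{P}$ as its terminal value at $\theta_\eta$; this is a probability measure equivalent to $\mathbb{P}$. Girsanov's theorem shows that $W^{\hat\alpha,\eta}$ is a $\mathbb{P}^{\hat\alpha,\eta}$-Brownian motion. For uniqueness, interpret the statement as uniqueness among measures whose density with respect to $\mathbb{P}$ is this stochastic exponential, equivalently measures under which $W^{\hat\alpha,\eta}$ is Brownian and which are equivalent to $\mathbb{P}$. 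Any such $\mathbb{Q}$ has a density martingale $L$ which, by martingale representation in the Brownian filtration, satisfies $dL = L\,\beta\cdot dW$. Girsanov then forces $\beta=-\hat\alpha\mathbbm{1}_{\{\theta_\eta>\cdot\}}$, so $L$ is the same stochastic exponential and $\mathbb{Q}=\mathbb{P}^{\hat\alpha,\eta}$ on $\mathcal{F}_\infty$.
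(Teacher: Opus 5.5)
Your proposal is correct, and its skeleton matches the paper's: first $Z\in\mathbb{H}^{\rm BMO}_{\theta_\eta}$ from Lemma \ref{BMO_lemma} since $\theta_\eta\le\eta^{-1}$, then the pointwise bound $|\hat\alpha_t|\lesssim 1+|Z_t|^{p-1}\le C(1+|Z_t|)$, then Kazamaki and Girsanov.

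The difference is in how you get the pointwise bound. The paper works on the dual side. It writes $Z_t=\nabla_{\!a}g_t(\hat\alpha_t)$ and applies the $p'$-uniform convexity of $g$ from Lemma \ref{g_basics_lemma} at $a'=0$. Together with the Fenchel identity $g_t(\hat\alpha_t)=\hat\alpha_t\cdot Z_t-H_t(Z_t)$ and $H\le k_0|z|^p+\vartheta$, this gives $|\hat\alpha_t|^{p'}\le C\big(g_t(0)+k_0|Z_t|^p\big)$. Its $p'$-th root is the same bound as yours, since $p/p'=p-1$.

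You stay on the primal side. You test the $p$-smoothness inequality in the direction $-\nabla H_t(z)/|\nabla H_t(z)|$ with step $s=1+|z|$, and use the two-sided growth of $H$ at $y=0$. Your parenthetical route is shorter still: Assumption \eqref{f_prog_meas} applied to $f=-H-\lambda y$ gives $|\nabla H_t(z)|\le C_f(1+2|z|)$ directly.

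What each buys:
\begin{itemize}
\item The paper's route reuses the duality machinery it needs anyway for the strict-convexity step in Theorem \ref{verif_arg}. However, it rests on the claims of Lemma \ref{g_basics_lemma}, which the paper only asserts as standard.
\item Your route is self-contained. It shows that this lemma needs nothing beyond differentiability of $H$ and Assumption \ref{driver_assumptions}.
\end{itemize}

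You also make the uniqueness of $\mathbb{P}^{\hat\alpha,\eta}$ explicit. You represent the density martingale in the Brownian filtration and identify its integrand through Girsanov, whereas the paper leaves this implicit in ``Girsanov's Theorem''.
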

\begin{proof}
    Note that $Z_t = \nabla_{\! a} g_t(\hat{\alpha}_t)$, so by Lemma \ref{g_basics_lemma}, we have $d\mathbb{P} \otimes dt$-a.e., 
    \begin{align}
        g_t(0) &\geq  g_t(\hat{\alpha}_t) - \hat{\alpha}_t \cdot Z_t + \frac{p-1}{p^{p'}\nu^{p'-1}} |\hat{\alpha}_t|^{p'} \nonumber \\[3pt]
        & = - H_t(Z_t) + \frac{p-1}{p^{p'}\nu^{p'-1}} |\hat{\alpha}_t|^{p'} \geq - k_0 |Z_t|^p - \vartheta  + \frac{p-1}{p^{p'}\nu^{p'-1}}  |\hat{\alpha}_t|^{p'} \nonumber \, .
    \end{align}
    This reformulates as $|\hat{\alpha}_t|^{p'} \leq \frac{p^{p'}\nu^{p'-1}}{p-1} (g_t(0) + k_0 |Z_t|^p)$. By Lemma \ref{BMO_lemma} and the fact that $\theta_{\eta}$ is a bounded stopping time, $Z \in \mathbb{H}^{\text{BMO}}_{\theta_{\eta}}$. Then, since $p' \geq 2$ and $g_t(0)$ is bounded, we deduce that $\hat{\alpha} \in \mathbb{H}^{\text{BMO}}_{\theta_{\eta}}$ and the statement follows from Girsanov's Theorem.
\end{proof}

In order to prove that $\hat{\alpha} \in \mathcal{A}$, we use a result borrowed from \citeauthor{strook_varadhan} \cite[Theorem~1.3.5]{strook_varadhan}, stated in the canonical space $(\Omega, \mathcal{F}) = (\mathscr{C}_d, \mathcal{B}(\mathscr{C}_d))$. We recall it here without proof, noting that we can always come back to this case, since $(\mathcal{F}_t)_{t \ge 0}$ is the Brownian filtration.

\begin{lemma}[\cite{strook_varadhan}]\label{strook_varadhan_lemma}
    Let $(\tau_n )$ be a non-decreasing sequence of stopping times adapted to the filtration $\mathbb{F}$, and $P_n$ be a probability distribution on $(\Omega, \mathcal{F}_{\tau_n})$. Assume that $P_{n+1} \equiv P_n$ on $\mathcal{F}_{\tau_n}$ for any $n$ and $\displaystyle P_n[\tau_n \leq T] \to 0$ as $n \to \infty$, for all $T > 0$. Then there exists a unique probability measure $P$ on $(\Omega, \mathcal{F})$ such that $P \equiv P_n$ on each $\mathcal{F}_{\tau_n}$. 
\end{lemma}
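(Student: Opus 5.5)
The plan is to build the limit measure horizon by horizon on $\mathcal{F}_T$ using the "survival" events $\{\tau_n > T\}$, then glue the horizons together with a projective-limit (Kolmogorov-type) argument on the canonical space $\mathscr{C}_d$, which is Polish. Fix $T>0$ and $A \in \mathcal{F}_T$. For every $n$, the set $A \cap \{\tau_n > T\}$ belongs to $\mathcal{F}_{\tau_n}$. Since $\mathcal{F}_{\tau_n} \subset \mathcal{F}_{\tau_{n+1}}$ and $P_{n+1} \equiv P_n$ on $\mathcal{F}_{\tau_n}$, we get
\begin{equation*}
P_{n+1}\big[A\cap\{\tau_{n+1}>T\}\big] \ge P_{n+1}\big[A\cap\{\tau_n>T\}\big] = P_n\big[A\cap\{\tau_n>T\}\big].
\end{equation*}
The sequence is therefore non-decreasing, and we set $Q_T(A) \coloneqq \lim_n P_n[A\cap\{\tau_n>T\}]$. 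Moreover $|Q_T(A) - P_n[A\cap\{\tau_n>T\}]| \le P_n[\tau_n \le T]$ uniformly in $A$.

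$Q_T$ is a uniform limit of finite measures, each restricted to an event of vanishing complement mass, so it is countably additive. For disjoint $A_k$, truncate the series: the tail $\sum_{k>K}$ is controlled by the $n$-th approximating measure plus $2P_n[\tau_n\le T]$. We also have $Q_T(\Omega)=\lim_n P_n[\tau_n>T]=1$. Consistency $Q_T = Q_S$ on $\mathcal{F}_S$ for $S<T$ follows from the same estimate, since the two approximating sequences differ by at most $P_n[\tau_n\le T]$.

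Next, $Q_T$ is a Borel probability on the Polish space $C([0,T],\mathbb{R}^d)$, identifying $\mathcal{F}_T$ with $\sigma({\rm x}_{\wedge T})$. The family $(Q_N)_{N\in\mathbb{N}}$ is projective under the continuous surjective restriction maps. Kolmogorov's extension theorem for projective limits of Polish spaces (equivalently, through finite-dimensional distributions plus the fact that each $Q_N$ already lives on continuous paths) yields a probability $P$ on $(\mathscr{C}_d,\mathcal{B}(\mathscr{C}_d))$ with $P \equiv Q_T$ on each $\mathcal{F}_T$. I expect this gluing step to be the technical core, but it is standard once consistency is established.

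It remains to show $P \equiv P_n$ on $\mathcal{F}_{\tau_n}$. Take $A \in \mathcal{F}_{\tau_n\wedge T} \subset \mathcal{F}_T \cap \mathcal{F}_{\tau_n}$. For $m \ge n$, $A \in \mathcal{F}_{\tau_m}$, so
\begin{equation*}
|Q_T(A) - P_m(A)| \le P_m[\tau_m\le T] \to 0 \quad\text{and}\quad P_m(A) = P_n(A),
\end{equation*}
hence $P(A)=P_n(A)$. By Galmarino's test on the canonical space, $\mathcal{F}_{\tau_n} = \sigma({\rm x}_{\wedge \tau_n})$. Since ${\rm x}_{\wedge\tau_n\wedge T}\to {\rm x}_{\wedge \tau_n}$ pointwise as $T\to\infty$, the finite-dimensional laws of ${\rm x}_{\wedge\tau_n}$ under $P$ and $P_n$ coincide as limits of coinciding laws. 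A monotone class argument then gives $P\equiv P_n$ on $\mathcal{F}_{\tau_n}$. This identification of $\mathcal{F}_{\tau_n}$ with the stopped path $\sigma$-field is the delicate point, and it is exactly where the canonical-space setting is used.

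For uniqueness, let $P'$ be any measure with $P'\equiv P_n$ on every $\mathcal{F}_{\tau_n}$, and take $A\in\mathcal{F}_T$. Since $A\cap\{\tau_n>T\}\in\mathcal{F}_{\tau_n}$ and $P'[\tau_n\le T]=P_n[\tau_n\le T]\to 0$, we get $P'(A)=\lim_n P_n[A\cap\{\tau_n>T\}]=Q_T(A)$. Therefore $P'=P$ on $\bigcup_T\mathcal{F}_T$, and hence on $\mathcal{F}$.
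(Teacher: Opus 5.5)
Your proof is correct. The paper gives no argument of its own here (the lemma is imported from Stroock--Varadhan without proof), so there is nothing in the text to compare against. Your scheme is a valid self-contained proof: the monotone approximations $P_n[A\cap\{\tau_n>T\}]$ converge to $Q_T(A)$ uniformly in $A\in\mathcal{F}_T$ at rate $P_n[\tau_n\le T]$; the consistent family $(Q_T)_T$ is glued together on $\mathscr{C}_d$ by Kolmogorov's theorem; and the same uniform estimate is reused to identify $P=P_n$ on $\mathcal{F}_{\tau_n}$ and to prove uniqueness.

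Your commentary places the difficulty in the wrong step. Galmarino's test is dispensable, because on any filtered space $\mathcal{F}_{\tau_n}=\sigma\big(\bigcup_N\mathcal{F}_{\tau_n\wedge N}\big)$. Indeed, $A\cap\{\tau_n\le N\}\in\mathcal{F}_{\tau_n\wedge N}$ for $A\in\mathcal{F}_{\tau_n}$. Also $B\cap\{\tau_n>N\}\in\mathcal{F}_{\tau_n\wedge N}$ for $B\in\mathcal{F}_M$ and $N\ge M$, and a good-sets argument then shows that $A\cap\{\tau_n=\infty\}$ lies in the generated $\sigma$-field as well. So $P=P_n$ on the algebra $\bigcup_N\mathcal{F}_{\tau_n\wedge N}$ already gives the claim by the $\pi$-$\lambda$ theorem.

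The canonical structure is indispensable only in your gluing step. Take $\Omega=\mathbb{N}$ with $\mathcal{F}_t=\sigma(\{1\},\dots,\{\lfloor t\rfloor\})$, $\tau_n\equiv n$, and $P_n$ charging only the atom $\{n+1,n+2,\dots\}$ of $\mathcal{F}_n$. All hypotheses hold, but no extension exists, since it would give mass $0$ to every singleton.

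That step also uses that $\mathcal{F}_T=\sigma({\rm x}_{\wedge T})$ is the raw, uncompleted $\sigma$-field. You correctly assume this, in line with the canonical-space formulation; the argument would not go through verbatim with the $\mathbb{P}$-completed filtration used elsewhere in the paper.
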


Before proving Theorem \ref{verif_arg}, we make the useful observation that the optimal control problem $\inf_{\alpha} J(\alpha)$ is actually a convex problem.

\begin{proposition}\label{P_A_convex}
    The set $\mathcal{P}_{\! \mathcal{A}} \coloneqq \{ \mathbb{P}^{\alpha} , \alpha \in \mathcal{A}\}$ is convex. Moreover, if $\alpha^1, \alpha^2 \in \mathcal{A}$ and $\mathbb{P}^{\alpha^1} = \mathbb{P}^{\alpha^2}$, then $\alpha^1 = \alpha^2$, $d\mathbb{P} \otimes dt-$a.e. on $[0, {\rm{T}}^{\delta})$.
\end{proposition}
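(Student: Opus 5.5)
Fix $\alpha^1,\alpha^2\in\mathcal{A}$ and $\mu\in(0,1)$, and set $\mathbb{Q}\coloneqq \mu\mathbb{P}^{\alpha^1}+(1-\mu)\mathbb{P}^{\alpha^2}$. Clearly $\mathbb{Q}[{\rm{T}}^{\delta}=\infty]=1$. The task is therefore to produce $\alpha\in\mathcal{A}$ with $\mathbb{P}^{\alpha}=\mathbb{Q}$, and the natural approach is to work with the density processes on the localized $\sigma$-algebras. For $T,\eta>0$, write $\tau\coloneqq T\wedge{\rm{T}}^{\delta}_{\eta}$ and $M^i_t\coloneqq \mathcal{E}(-\int_0^{\cdot}\alpha^i_s\cdot dW_s)_{t\wedge\tau}$. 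Definition \ref{A_def} says that $M^i$ is the density of $\mathbb{P}^{\alpha^i}$ with respect to $\mathbb{P}$ on $\mathcal{F}_{\tau}$, so in particular $M^i$ is a true $\mathbb{P}$-martingale on $[0,\tau]$. Hence $M\coloneqq \mu M^1+(1-\mu)M^2$ is the density of $\mathbb{Q}$ on $\mathcal{F}_{\tau}$. Since $M$ is strictly positive, it can be written as $M=\mathcal{E}(-\int_0^{\cdot}\alpha_s\cdot dW_s)$ with the convex-combination (Bayes) formula
\begin{equation*}
\alpha_t\coloneqq \frac{\mu M^1_t\alpha^1_t+(1-\mu)M^2_t\alpha^2_t}{\mu M^1_t+(1-\mu)M^2_t}\, .
\end{equation*}
Because $dM=-M\alpha\cdot dW$, this follows directly from It\^o's formula, or from $d\log M=dM/M-\tfrac12 d\langle M\rangle/M^2$.

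Next I would check that this $\alpha$ is consistent across $(T,\eta)$ and admissible. The weights $\lambda_t\coloneqq \mu M^1_t/M_t\in(0,1)$ do not depend on the localization, because $M^i$ restricted to $[0,T\wedge{\rm{T}}^{\delta}_{\eta}]$ is the same process for every larger pair $(T',\eta')$. Therefore $\alpha$ is well defined and progressively measurable on $[0,{\rm{T}}^{\delta})$; we can set $\alpha=0$ afterwards. The integrability requirement $\alpha\in\mathbb{H}^{2,\rm loc}_{{\rm{T}}^{\delta}_\eta}$ follows from $|\alpha_t|\leq|\alpha^1_t|+|\alpha^2_t|$. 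The density identity holds by construction, and $\mathbb{P}^{\alpha}=\mathbb{Q}$ is then a probability measure with the required properties. This shows that $\mathcal{P}_{\mathcal{A}}$ is convex.

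For the second claim, suppose $\mathbb{P}^{\alpha^1}=\mathbb{P}^{\alpha^2}$. Then for every $T,\eta$ the two stochastic exponentials coincide $\mathbb{P}$-a.s. on $\mathcal{F}_{T\wedge{\rm{T}}^{\delta}_{\eta}}$, and by the martingale property they coincide as processes on $[0,T\wedge{\rm{T}}^{\delta}_{\eta}]$. Taking logarithms gives, on that interval,
\begin{equation*}
\int_0^{t}(\alpha^1_s-\alpha^2_s)\cdot dW_s=\tfrac12\int_0^{t}\big(|\alpha^1_s|^2-|\alpha^2_s|^2\big)\,ds\, .
\end{equation*}
The left side is a continuous local martingale and the right side has finite variation, so both vanish. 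The quadratic variation of the left side is then zero, which gives $\int_0^{T\wedge{\rm{T}}^{\delta}_\eta}|\alpha^1_s-\alpha^2_s|^2ds=0$ a.s. Letting $T\to\infty$ and $\eta\to0$ yields $\alpha^1=\alpha^2$, $d\mathbb{P}\otimes dt$-a.e. on $[0,{\rm{T}}^{\delta})$.

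The main technical point is the first step: identifying the Radon--Nikodym density of $\mathbb{Q}$ on $\mathcal{F}_{T\wedge{\rm{T}}^{\delta}_\eta}$ as the convex combination of the two densities, and confirming that this combination is a genuine stochastic exponential of an adapted process, rather than just a positive martingale. This is where the $\mathbb{P}$-martingale property on the localized horizons is essential. No absolute continuity at ${\rm{T}}^{\delta}$ is needed, because all identities are stated on $\mathcal{F}_{T\wedge{\rm{T}}^{\delta}_\eta}$, and uniqueness of $\mathbb{P}^{\alpha}$ (established in the preceding remark) identifies the measure globally.
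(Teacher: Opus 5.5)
Your proposal is correct and follows essentially the same route as the paper. You mix the localized densities, identify the mixed control through the weighted formula $\alpha=\pi\alpha^1+(1-\pi)\alpha^2$ with $dM=-M\alpha\cdot dW$, and for the second claim you supply the detail the paper leaves implicit: equal densities force equal stochastic exponentials, whose logarithms give a continuous local martingale of finite variation, hence $\int_0^{T\wedge{\rm{T}}^{\delta}_{\eta}}|\alpha^1_s-\alpha^2_s|^2ds=0$. One slip is harmless: taking logarithms gives $\int_0^t(\alpha^1_s-\alpha^2_s)\cdot dW_s=\tfrac12\int_0^t(|\alpha^2_s|^2-|\alpha^1_s|^2)\,ds$, with the opposite sign to what you wrote, which does not matter since both sides vanish.
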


\begin{proof}
    Let $\alpha^1, \alpha^2 \in \mathcal{A}$ and denote by $L^{i}$ the changes of measures:
    \begin{equation*}
        \begin{array}{cc}
             \displaystyle L^{i}_t \coloneqq \mathcal{E}\Big( -\int_0^{\cdot} \alpha_s \cdot dW_s \Big)_{t} = \frac{d \mathbb{P}^{\alpha^{i}}}{d \mathbb{P}} \Big|_{\mathcal{F}_t} \, , & i \in \{ 1, 2\} \,, \, t < {\rm{T}}^{\delta} .
        \end{array}
    \end{equation*}
    Then $L^{i}$ satisfies $d L^{i}_t = - L^{i}_t \alpha^{i}_t \cdot dW_t$ when $t < {\rm{T}}^{\delta}$.
    Let $\varepsilon \in (0,1)$ and define:
    \begin{equation*}
        \begin{array}{ccc}
             \displaystyle L^{\varepsilon} \coloneqq \varepsilon L^1 + (1-\varepsilon) L^2 \, , & \displaystyle\alpha^{\varepsilon} \coloneqq \frac{\varepsilon L^1 \alpha^1 + (1-\varepsilon) L^2 \alpha^2}{\varepsilon L^1 + (1-\varepsilon) L^2} & \text{and } \, \mathbb{P}^{\varepsilon} \coloneqq \varepsilon \mathbb{P}^{\alpha^1} + (1-\varepsilon) \mathbb{P}^{\alpha^2} \,.
        \end{array}
    \end{equation*}
    Notice that $\alpha^{\varepsilon} = \pi_t \alpha^1_t + (1-\pi_t) \alpha^2_t$, with $\pi_t \coloneqq \frac{\varepsilon L^1_t}{L^{\varepsilon}_t} \in [0, 1]$, so that $\int_0^{T \wedge {\rm{T}}^{\delta}_{\eta}} |\alpha^{\varepsilon}_t|^2 dt < \infty$, $\mathbb{P}-$a.s. for all $T, \eta > 0$. Moreover, $L^{\varepsilon}$ is a martingale and $d L_t^{\varepsilon} = - L_t^{\varepsilon} \alpha_t^{\varepsilon} \cdot dW_t$, so we have:
    \begin{equation*}
       \frac{d \mathbb{P}^{\alpha^{\varepsilon}}}{d \mathbb{P}} \big|_{\mathcal{F}_{T \wedge {\rm{T}}^{\delta}_{\eta}}} = L^{\varepsilon}_{T \wedge {\rm{T}}^{\delta}_{\eta}} = \frac{d \mathbb{P}^{\varepsilon}}{d \mathbb{P}} \big|_{\mathcal{F}_{T \wedge {\rm{T}}^{\delta}_{\eta}}}  \, \text{ for all } \, T, \eta > 0 \, , \text{ which yields } \, \mathbb{P}^{\varepsilon} = \mathbb{P}^{\alpha^{\varepsilon}} \, .
    \end{equation*}
    We finally observe that $\mathbb{P}^{\varepsilon}[{\rm{T}}^{\delta} = \infty] = 1$, so $\mathbb{P}^{\varepsilon} \in \mathcal{P}_{\! \mathcal{A}}$, which proves the first claim. To show the second claim, we note that, if $\mathbb{P}^{\alpha^1} = \mathbb{P}^{\alpha^2}$, then $\alpha^1 = \alpha^2$, $d\mathbb{P} \otimes dt-$a.e. on $[0, T \wedge {\rm{T}}^{\delta}_{\eta}]$. We conclude by sending $T \to \infty$ and $\eta \to 0$.
\end{proof}

\vspace{5pt}

\begin{proof}[Proof of Theorem \ref{verif_arg}]
   We proceed in three steps. The first two steps establish the verification, while the last step shows the uniqueness of the optimal control. \\

    \vspace{-8pt}
    
    \noindent \underline{\textit{Step 1:}} Fixing $\eta > 0$ and the distribution $\mathbb{P}^{\hat{\alpha}, \eta}$ in Lemma \ref{alpha_eta}, we obtain:
    \begin{align}
        Y_0 &= e^{- \lambda \theta_{\eta}} Y_{\theta_{\eta}} + \int_0^{\theta_{\eta}} e^{-\lambda s} \big(  -H_s( Z_s) + \hat{\alpha}_s \cdot Z_s \big) ds - \int_0^{\theta_{\eta}} e^{-\lambda s} Z_s \cdot dW_s^{\hat{\alpha}, \eta} \nonumber \\
        &= e^{- \lambda \theta_{\eta}} Y_{\theta_{\eta}} + \int_0^{\theta_{\eta}} e^{-\lambda s} g_s(\hat{\alpha}_s) ds -  \int_0^{\theta_{\eta}} e^{-\lambda s} Z_s \cdot dW_s^{\hat{\alpha}, \eta} \, . \nonumber
    \end{align}
    Then, taking the expectation with respect to $\mathbb{P}^{\hat{\alpha}, \eta}$ and noting that $g$ is bounded from below by \eqref{g_bounded_below}:
    \begin{align}
        Y_0  \, = \, \mathbb{E}^{\mathbb{P}^{\hat{\alpha}, \eta}} \Big[ e^{- \lambda \theta_{\eta}} Y_{\theta_{\eta}} + \int_0^{\theta_{\eta}} e^{-\lambda s} g_s(\hat{\alpha}_s) ds \Big] \, \geq \,  \mathbb{E}^{\mathbb{P}^{\hat{\alpha}, \eta}} \big[ e^{- \lambda \theta_{\eta}} Y_{\theta_{\eta}} \big] + \frac{({\rm{ess}}\inf g )^-}{\lambda} \, . \label{Y_bounded_above_control}
    \end{align} 
    For $n>0$, define $P_n \coloneqq \mathbb{P}^{\hat{\alpha}, \frac{1}{n}}$ and $\tau_n \coloneqq \theta_{1/n}$. Notice that $P_{n+1} = P_n$ on $\mathcal{F}_{\tau_n}$. To apply Lemma \ref{strook_varadhan_lemma}, let $T > 0$ and choose $n > T$ so that $\tau_n = {\rm{T}}^{\delta}_{1/n}$, $P_n-$a.s. \hspace{-7pt} on $\{ \tau_n \leq T \}$. By the estimates \eqref{bounds_y}, \eqref{Y_bounded_above_control}, and the fact that $Y \geq \frac{({\rm{ess}}\inf f(0,0))^-}{\lambda}$ by Remark \ref{bounded_below_rmk}, we obtain for all $\varepsilon > 0$,
    \begin{align}
        Y_0 -  \frac{({\rm{ess}}\inf g )^-}{\lambda} \geq  \mathbb{E}^{P_n} \big[ e^{- \lambda \tau_n} \, Y_{\tau_n} \big] &=   \mathbb{E}^{P_n} \big[ e^{- \lambda {\rm{T}}^{\delta}_{1/n}} \,  Y_{{\rm{T}}^{\delta}_{1/n}} \mathbbm{1}_{\{\tau_n \leq T\}} \big] + \mathbb{E}^{P_n} \big[ e^{- \lambda \tau_n} \,  Y_{\tau_n} \mathbbm{1}_{\{\tau_n > T \}}  \big]   \nonumber \\
        & \geq (C_0 - \varepsilon)\, \Phi_p\big( \frac{1}{n} \big) \, e^{-\lambda T} \, \mathbb{P}_n [ \tau_n \leq T \big] - \frac{c_{\varepsilon}}{\lambda_1} + \frac{({\rm{ess}}\inf f(0,0))^-}{\lambda} \,. \nonumber
    \end{align}
    for some constant $c_{\varepsilon} > 0$ independent of $n$. This implies that $\mathbb{P}_n [ \tau_n \leq T \big] \rightarrow 0$ as $n \rightarrow \infty$. By Lemma \ref{strook_varadhan_lemma}, we obtain a unique probability measure $\mathbb{P}^{\hat{\alpha}}$ on $(\Omega, \mathcal{F})$ such that $\mathbb{P}^{\hat{\alpha}} \equiv P_n$ on each $\mathcal{F}_{\tau_n}$. It follows that $\mathbb{P}^{\hat{\alpha}}[ \tau_n \leq T] \to 0$ as $n \to \infty$, which imply that:
    \begin{equation*}
        \mathbb{P}^{\hat{\alpha}}[{\rm{T}}^{\delta} \leq  T] \, \leq \, \mathbb{P}^{\hat{\alpha}}[\tau_n \leq T]  \xrightarrow[n \to \infty]{} 0 \, .
    \end{equation*}
    Since this holds for any arbitrary $T > 0$, we conclude that $\mathbb{P}^{\hat{\alpha}}[{\rm{T}}^{\delta} = \infty] = 1$, so $\hat{\alpha} \in \mathcal{A}$. Finally we have, by Remark \ref{bounded_below_rmk},
    \begin{align}
        Y_0 \, = \, \mathbb{E}^{\mathbb{P}^{\hat{\alpha}}} \Big[ e^{- \lambda \tau_n} Y_{\tau_n} + \int_0^{\tau_n} e^{-\lambda s} g_s(\hat{\alpha}_s) ds \Big] \, \geq \,   \mathbb{E}^{\mathbb{P}^{\hat{\alpha}}} \Big[ e^{- \lambda \tau_n} \frac{({\rm{ess}}\inf f(0, 0))^-}{\lambda} + \int_0^{\tau_n} e^{-\lambda s} g_s(\hat{\alpha}_s) ds \Big]  \nonumber \, .
    \end{align}
    Since $e^{-\lambda \tau_n} \to 0$ and $\int_0^{\tau_n} e^{-\lambda s} g_s(\hat{\alpha}_s) ds \to \int_0^{\infty} e^{-\lambda s} g_s(\hat{\alpha}_s) ds$, $\mathbb{P}^{\hat{\alpha}}-$a.s. \hspace{-12pt} when $n \to \infty$, we decude by Fatou's Lemma that $Y_0 \geq J(\hat{\alpha})$. \\

    \vspace{-8pt}
    
    \noindent \underline{\textit{Step 2:}} Let $\alpha \in \mathcal{A}$, it is enough to prove that $Y_0 \leq J(\alpha)$ to conclude. Because $(Y, Z)$ was chosen as the minimal solution of the singular BSDE \eqref{BSDE_LL}, we know that $\displaystyle Y = \lim_{R \rightarrow \infty} Y^R$ where $(Y^R, Z^R)$ is the unique solution of\hspace{.5pt} ${\rm{BSDE}}_{R}({\rm{T}}^{\delta}, f)$. Then, if $(\theta_{\eta})_{\eta > 0}$ is a sequence of bounded stopping times such that $\theta_{\eta} \rightarrow {\rm{T}}^{\delta}$ as $\eta \to 0$, we may write:
    \begin{equation*}
        Y_0^R = e^{- \lambda \theta_{\eta}} Y_{\theta_{\eta}}^R + \int_0^{\theta_{\eta}} e^{-\lambda s} \big\{ - H_s(Z_s^R) + \alpha_s \cdot Z_s^R \big\} ds - \int_0^{\theta_{\eta}} e^{-\lambda s} Z_s^R \cdot dW_s^{\alpha} \, ,
    \end{equation*}
    where $W^{\alpha} \coloneqq W - \int_0^{\cdot} \alpha_s ds$ is a $\mathbb{P}^{\alpha}$-Brownian motion. Note that $e^{- \lambda \theta_{\eta}} Y_{\theta_{\eta}}^R \to 0$, $\mathbb{P}^{\alpha}-$a.s. \hspace{-10pt} since $Y^R$ is bounded and $\mathbb{P}^{\alpha}[{\rm{T}}^{\delta} = \infty] = 1$. By definition of the Hamiltonian $H$ and taking the expectation with respect to $\mathbb{P}^{\alpha}$, this leads to:
    \begin{equation*}
        Y_0^R \, \leq \,   \mathbb{E}^{\mathbb{P}^{\alpha}} \left[ e^{-\lambda \theta_{\eta}} Y_{\theta_{\eta}}^R + \int_0^{\theta_{\eta}} e^{-\lambda s} g_s(\alpha_s) ds \right] \xrightarrow[\eta \rightarrow 0]{} J(\alpha) \, ,
    \end{equation*}
    where the convergence holds by the Dominated and Monotone Convergence Theorems, together with the fact that $g$ is bounded from below. The claim is proved by sending $R \to \infty$. \\

    \vspace{-8pt}

    \noindent \underline{Step 3:} Keeping the notations of statement and proof of Proposition \ref{P_A_convex}, we show that the function $\Tilde{J}(\mathbb{P})$ defined on $\mathcal{P}_{\! \mathcal{A}}$ by $\Tilde{J}(\mathbb{P}^{\alpha}) \coloneqq J(\alpha)$, which is well defined by Proposition \ref{P_A_convex}, is strictly convex. Let $\alpha^1, \alpha^2 \in \mathcal{A}$ be two admissible processes. We use the notations from the proof of Proposition \ref{P_A_convex}. By convexity of $a \mapsto g_t(a)$, if $\varepsilon \in (0,1)$, we have:
    \begin{align}
        L^{\varepsilon}_t g_t(\alpha_t^{\varepsilon}) \, \leq \,  L^{\varepsilon}_t \pi_t g_t(\alpha^1_t) + L^{\varepsilon}_t (1-\pi_t) g_t(\alpha^2_t) \, \leq \,  \varepsilon L^1_t g_t(\alpha^1_t) + (1-\varepsilon) L^2_t g_t(\alpha^2_t) \, . \nonumber
    \end{align}
    Integrating, we obtain:
    \begin{equation*}
        \Tilde{J}(\mathbb{P}^{\varepsilon}) = \mathbb{E}^{\mathbb{P}} \! \int_0^{\infty} \! e^{-\lambda t} L^{\varepsilon}_t g_t(\alpha_t) dt \leq \varepsilon  \Tilde{J}(\mathbb{P}^{\alpha^1}) + (1-\varepsilon) \Tilde{J}(\mathbb{P}^{\alpha^2}) \, .
    \end{equation*}
    Moreover, since $a \mapsto g_t(a)$ is strictly convex by Lemma \ref{g_basics_lemma} and $L^1_t, L^2_t > 0$ a.s. on every finite horizon, the inequality above is strict if $\mathbb{P}^{\alpha^1} \neq \mathbb{P}^{\alpha^2}$. Therefore, if $\alpha^1$ and $\alpha^2$ satisfy $J(\alpha^1) = J(\alpha^2) = Y_0$, then $\alpha^1 = \alpha^2$, $d\mathbb{P} \otimes dt-$a.e. on $[0, {\rm{T}}^{\delta})$ by Proposition \ref{P_A_convex}.
\end{proof}

\section{Markov exit time}\label{Applications_sec}

In this section we prove the results of Section \ref{markov_main_results} under Assumption \ref{state_process_assumptions}. For $\eta \geq 0$, recall that $D^{\eta} = \{ x' \in D : |d_{\pm}(x')| < \eta \}$. Then for $(t, x) \in [0, \infty) \times D$, we denote by $X^{t, x}$ the unique solution of the SDE \eqref{SDE_state} on $[t, +\infty)$ such that $X^{t, x}_t = x$ and $\tau^{t, x} \coloneqq \inf \{ s \geq t : X_s^{t, x} \notin D \}$. When it is clear and convenient, we omit the dependence on $\eta$, $t$ and $x$ in the notations. We first state an extension of a result derived by \citeauthor{popier_random_time} in the proof of \cite[Proposition~4]{popier_random_time}.

\begin{lemma}\label{lemma_popier_dist}
    Let $D$ be a bounded domain of $\mathbb{R}^d$ with $C^2$-regular boundary.
    Let $x \in D$ and $\mathbf{d}$ be the regularized signed distance to $\partial D$ defined in \eqref{regularized_distance}. Then, for all $\gamma > \frac{1}{2}$, $\mathbf{d}(X_{\cdot}^{0, x})^{\gamma - 1} \in \mathbb{H}_{\tau^{0, x}}^{\textnormal{BMO}}$.
\end{lemma}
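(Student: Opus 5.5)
We need to show that $\sup_{\kappa}\big\|\mathbb{E}_\kappa\int_\kappa^{\tau}\mathbf{d}(X_s)^{2(\gamma-1)}ds\big\|_\infty<\infty$, writing $\tau=\tau^{0,x}$ and $X=X^{0,x}$. The exponent $2(\gamma-1)>-1$ is exactly what makes the singularity integrable. The natural route is a Lyapunov (Green-function) argument: build a bounded $C^2$ function $\psi$ on $D$ such that $\mathcal{L}\psi\le -\mathbf{d}^{2(\gamma-1)}$ in $D$, where $\mathcal{L}=b\cdot\nabla+\frac12{\rm tr}(\Sigma\Sigma^\intercal\nabla^2)$. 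Then Itô's formula between $\kappa$ and $\tau\wedge T\wedge \tau_\eta$ gives
\[
\mathbb{E}_\kappa\int_\kappa^{\tau\wedge T\wedge\tau_\eta}\mathbf{d}(X_s)^{2(\gamma-1)}ds\le \psi(X_\kappa)-\mathbb{E}_\kappa\psi(X_{\tau\wedge T\wedge\tau_\eta})\le 2\|\psi\|_\infty ,
\]
where $\tau_\eta$ is the hitting time of the level $\{\mathbf{d}=\eta\}$, so that $\psi$ is $C^2$ with bounded derivatives on the relevant set and the stochastic integral is a true martingale. Monotone convergence as $T\to\infty$ and $\eta\to 0$ then concludes. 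If $\gamma\ge 1$ the integrand is bounded, since $\mathbf{d}$ is bounded on bounded $D$, and the claim reduces to $\sup_{x'}\mathbb{E}_{x'}[\tau]<\infty$ via the strong Markov property, as in Remark \ref{ST_condition}. So the plan concentrates on $\gamma\in(\frac12,1)$.

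For the construction, set $\beta\coloneqq 2\gamma-1\in(0,1)$ and try $\psi=A\,\phi_0 - B\,\mathbf{d}^{\beta}$. Here $\phi_0$ is a bounded function with $\mathcal{L}\phi_0\le -1$ on $D$; for instance $\phi_0(x)=-e^{\mu x_1}$, suitably shifted, with $\mu$ large, using uniform ellipticity \eqref{uniform_ellipticity}, boundedness of $D$, and Lipschitz (hence bounded on $\overline D$) coefficients. Near the boundary, in $D^{\overline\eta/2}$, where $\mathbf{d}=d_\pm$ is $C^2$ with $|\nabla\mathbf{d}|=1$, we compute
\[
\mathcal{L}(\mathbf{d}^\beta)=\tfrac12\beta(\beta-1)\mathbf{d}^{\beta-2}\,|\Sigma^\intercal\nabla\mathbf{d}|^2+\beta\mathbf{d}^{\beta-1}\mathcal{L}\mathbf{d}\le -\tfrac12\beta(1-\beta)\sigma_0^2\,\mathbf{d}^{\beta-2}+C\mathbf{d}^{\beta-1}.
\]
Since $\beta-2=2(\gamma-1)-1<2(\gamma-1)$, the term $-\mathbf{d}^{\beta-2}$ dominates both $\mathbf{d}^{2(\gamma-1)}$ and the error $C\mathbf{d}^{\beta-1}$ for $\mathbf{d}$ small. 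In fact the sharper power $\beta-2$ gives more than needed; one could equally take $\beta=2\gamma$ when $2\gamma<1$, but $\beta=2\gamma-1$ works uniformly. Thus $\mathcal{L}(-B\mathbf{d}^\beta)\le -\mathbf{d}^{2(\gamma-1)}$ on $\{\mathbf{d}<\eta_1\}$ for some small $\eta_1$. On the complement $\{\mathbf{d}\ge\eta_1\}$, the function $\mathbf{d}^\beta$ is $C^2$ with bounded derivatives and $\mathbf{d}^{2(\gamma-1)}$ is bounded, so choosing $A$ large absorbs everything via $\mathcal{L}\phi_0\le-1$. Both pieces of $\psi$ are bounded on $D$ because $\beta>0$.

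The main obstacle is precisely the requirement $\beta>0$. Boundedness of $\psi$ needs $\beta>0$, while dominating $\mathbf{d}^{2(\gamma-1)}$ needs $\beta-2\le 2(\gamma-1)$, i.e.\ $\beta\le 2\gamma$; these are compatible exactly when $\gamma>0$. This suggests the threshold $\frac12$ in the statement may not be sharp, and I will double-check whether the concavity term alone suffices or whether the drift term $\beta\mathbf{d}^{\beta-1}\mathcal{L}\mathbf{d}$ forces the restriction. A secondary technical point is the smoothness of $\mathbf{d}$ away from the tubular neighborhood. This is handled by the regularization \eqref{regularized_distance}: $\mathbf{d}$ is smooth and bounded below by a positive constant on $D\setminus D^{\overline\eta/2}$, so $\mathbf{d}^\beta$ is $C^2$ there.
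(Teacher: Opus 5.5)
Your barrier argument is a valid route, genuinely different from the paper's, but as written it has a sign error that breaks the key inequality. With $\psi=A\phi_0-B\mathbf{d}^{\beta}$, your own computation gives $\mathcal{L}(-B\mathbf{d}^{\beta})=-B\,\mathcal{L}(\mathbf{d}^{\beta})\ge \tfrac12 B\beta(1-\beta)\sigma_0^2\,\mathbf{d}^{\beta-2}-CB\,\mathbf{d}^{\beta-1}$, which tends to $+\infty$ at $\partial D$. An upper bound on $\mathcal{L}(\mathbf{d}^\beta)$ only gives a lower bound on $\mathcal{L}(-B\mathbf{d}^\beta)$, so $\mathcal{L}\psi\le-\mathbf{d}^{2(\gamma-1)}$ actually fails near the boundary.

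The concave power $\mathbf{d}^{\beta}$ is itself the $\mathcal{L}$-superharmonic barrier, so take $\psi=A\phi_0+B\mathbf{d}^{\beta}$. Choose $\eta_1\le 1$ with $\eta_1<\min(\overline{\eta}/2,R)$ (so that $\mathbf{d}=d_\pm$ there) and $C\eta_1\le\tfrac14\beta(1-\beta)\sigma_0^2$. On $\{\mathbf{d}<\eta_1\}$ this gives $\mathcal{L}\psi\le -A-\tfrac14 B\beta(1-\beta)\sigma_0^2\,\mathbf{d}^{\beta-2}$. Since $\beta-2\le 2(\gamma-1)$ and $\mathbf{d}\le 1$, this is at most $-\mathbf{d}^{2(\gamma-1)}$ once $B\ge 4/(\beta(1-\beta)\sigma_0^2)$. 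Then choose $A$ on $\{\mathbf{d}\ge\eta_1\}$ as you describe. With this fix the rest of your argument is complete: Itô's formula up to $\tau\wedge T\wedge\tau_\eta$ with a bounded integrand, then monotone convergence, with $\psi$ still bounded because $\beta>0$.

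The paper argues differently:
\begin{itemize}
\item It reduces, via the strong Markov property, to $\sup_{x'\in D}\mathbb{E}\int_0^{\tau^{0,x'}}\mathbf{d}(X^{0,x'}_s)^{2(\gamma-1)}ds$.
\item It writes this as $\int_D\mathbf{d}(y)^{2(\gamma-1)}G(x',y)\,dy$, where $G$ is the Green function of the killed diffusion.
\item It uses only interior bounds $G(x',y)\lesssim|x'-y|^{2-d}$ (logarithmic if $d=2$, bounded if $d=1$), splitting $D$ into a ball around the starting point, the boundary layer, and the rest. In the boundary layer, finiteness needs $2(\gamma-1)>-1$, i.e. $\gamma>\frac12$.
\end{itemize}
That route is short once Green-function estimates for non-divergence operators are imported, and it shows the quantity as an occupation-density integral. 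However, the uniformity of the bound in the starting point, including points close to $\partial D$, is asserted rather than derived.

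Your route is self-contained: it uses only Itô's formula, the $C^2$ regularity of $\mathbf{d}$ with $|\nabla\mathbf{d}|=1$ near $\partial D$, and ellipticity. Uniformity in $\kappa$ comes for free from the bound $2\|\psi\|_\infty$, with no Markov reduction needed.

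Your suspicion about the threshold is also right. The drift term $\beta\mathbf{d}^{\beta-1}\mathcal{L}\mathbf{d}$ is lower order, so any $\beta\in(0,\min(1,2\gamma)]$ works. The corrected barrier therefore gives the conclusion for every $\gamma>0$, and this range is sharp already for one-dimensional Brownian motion. The paper's restriction $\gamma>\frac12$ comes from not using the linear vanishing of $G(x',\cdot)$ at $\partial D$.

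Finally, near the boundary your computation uses only the Itô decomposition of $\delta=\mathbf{d}(X)$, $|U|\ge\sigma_0$ and bounded $\Xi$. It therefore carries over to the non-Markovian $\delta$ of Assumption \ref{d_assumptions}, with the interior contribution controlled by a uniform bound on $\mathbb{E}_\kappa[\theta-\kappa]$.
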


\begin{proof}
    Let $\kappa \in \mathcal{T}_0^{\tau^{0, x}}$. By the strong Markov property of $X$ and the fact $\tau^{0, x} = \kappa + \tau^{\kappa, X^{0, x}_{\kappa}}$, we have:
    \begin{align}
       \Big\| \mathbb{E}_{\kappa} \int_{\kappa}^{\tau^{0, x}} \!\! \mathbf{d}(X_s^{0, x})^{2(\gamma-1)} \, ds \Big\|_{\infty} \hspace{-5pt} &=  \sup_{x' \in D} \mathbb{E} \Big[\int_{\kappa}^{\kappa + \tau^{\kappa, x'}} \! \!\!\!\! \mathbf{d}(X_s^{\kappa, x'})^{2(\gamma-1)} \, ds \, \Big| \,  X_{\kappa}^{0, x} = x' \Big] \nonumber \\[3pt]
       &=  \sup_{x' \in D} \mathbb{E} \Big[\int_{0}^{{\tau^{0, x'}}} \!\! \mathbf{d}(X_s^{0, x'})^{2(\gamma-1)} \, ds \, \Big| \, X_{0}^{0, x'} = x' \Big] \, . \nonumber
    \end{align}

    Let $G(x, \cdot)$ be the Green function of the process $X^{0, x}$ killed at time $\tau^{0, x}$, see \citeauthor{pinsky_book} \cite[Section 4.2, Theorem 2.5]{pinsky_book}. Then there exists $C > 0$ such that:
    \begin{equation*}
        \begin{array}{lc}
             \displaystyle |G(x,x')| \leq C \, , & \text{if } \, d=1 \, , \\[3pt]
             \displaystyle |G(x,x')| \leq C (-\log |x-x'| \vee 1) \, , & \text{if } \, d = 2 \, ,  \\[3pt]
             \displaystyle |G(x,x')| \leq C |x-x'|^{2-d} \, , & \text{if } \, d \geq 3 \, ,
        \end{array}
    \end{equation*}
    see \textit{e.g.} \citeauthor{green_function_Kim_skellaris} \cite{green_function_Kim_skellaris}. Let $\eta > 0$ be small enough so that $B(x, \eta) \subset D$ and $B(x, \eta) \cap D^{\eta} = \emptyset$. Then, denoting $\underline{D}_{\eta} \coloneqq D \backslash (B(x, \eta) \cup D^{\eta})$, one has:
    \begin{align}
        \mathbb{E} \!  \int_0^{\tau^{0, x}}\! \! \! \mathbf{d}(X_s^{0, x})^{2(\gamma-1)} ds = \int_{B(x, \eta)} \!\!\! \! \mathbf{d}(x')^{2(\gamma-1)} G(&x,x') dx' + \int_{D^{\eta}} \! \mathbf{d}(x')^{2(\gamma-1)} G(x,x') dx' \nonumber \\
        & + \int_{\underline{D}_{\eta}} \! \mathbf{d}(x')^{2(\gamma-1)} G(x,x') dx' \nonumber \, .
    \end{align}
    The function $G(x, \cdot)$ has a singularity in $x$ if $d \ge 2$, but this singularity is integrable so the first integral is finite. The second integral is finite because $\gamma > \frac{1}{2}$. Finally $ \mathbf{d}(\cdot)^{\varepsilon -2} G(x,\cdot)$ is bounded on $\underline{D}_{\eta}$, so the third integral is finite. Note that the bounds on these integrals are uniform in $x \in D$, which concludes de proof.
\end{proof}

\vspace{5pt}

\begin{proof}[Proof of Theorem \ref{thm:Markov_BSDE}]
    By Itô's formula, the process $\delta_t = \mathbf{d}(X_t)$ admits the Itô decomposition \eqref{delta_SDE} with $\delta_0 = \mathbf{d}(x)$, $U \coloneqq \Sigma^{\intercal} \nabla \mathbf{d}(X_{\cdot})$ and $\Xi \coloneqq \nabla \mathbf{d}(X_{\cdot}) \cdot b(X_{\cdot}) + \frac{1}{2} [D^2\mathbf{d}: \Sigma \Sigma^{\intercal}] (X_{\cdot})$. Note that $\Sigma$ is bounded and $|\nabla \mathbf{d}|=1$ in $D^{\overline{\eta}}$, so $|U_t| \geq \sigma_0$ a.s. \hspace{-7pt} when $\delta_t \in [-\overline{\eta}, \overline{\eta}]$. By Assumption \eqref{Lpz_condition_b_sigma}, $b$ has linear growth in $x$, so $b(X_{\cdot})$ is a.s. \hspace{-8pt} bounded for $t \leq {\rm{T}}^{\delta}$. Therefore, Assumption \ref{d_assumptions} holds and the existence of a minimal solution follow from Theorems \ref{existence_thm} and \ref{uniqueness_thm}. Let $D' \subset D$ be a bounded subdomain with $C^2$-regular boundary, and $\theta = \{ t \ge 0 : X_t \notin D' \}$. Then $\delta \in \mathbb{S}^{\infty}_{\theta}$ and $\delta^{\gamma-1} \in \mathbb{H}^{{\rm{BMO}}}_{\theta}$ for all $\gamma > \frac{1}{2}$ by Lemma \ref{lemma_popier_dist}. We may show by the same argument that $\ell(\delta)\delta^{\gamma-1} \in \mathbb{H}^{{\rm{BMO}}}_{\theta}$. We conclude by applying Theorems \ref{existence_thm} and \ref{uniqueness_thm}.
\end{proof}

\vspace{5pt}

\begin{proof}[Proof of Proposition \ref{connection_PDE_prop}]
    Let $f_t(y, z) = -\frac{1}{p}|z|^p - \lambda y + g(X_t)$ and note that Assumption \ref{driver_assumptions} holds. Denote by $u^R$ the unique solution in $W^{2, k}(D)$ of the PDE \eqref{HJB_singular} satisfying $u(x) = R$ when $x \in \partial D$. Let $Y^{x, R}_t \coloneqq u^R(X_t)$ and $Z^{R}_t \coloneqq \nabla u^R(X_t)$. Then by \citeauthor{kobylanski} \cite[Theorem~3.18]{kobylanski}, $(Y^{R}, Z^{R})$ is the unique solution of\hspace{.5pt} ${\rm{BSDE}}_{R}({\rm{T}}^{\delta}, f)$. We conclude the second claim by sending $R$ to $\infty$. Finally, if $\gamma > \frac{1}{2}$ and $\kappa \in \mathcal{T}_0^{{\rm{T}}^{\delta}}$, then we have:
    \begin{equation*}
        \| Z \delta^{\gamma -1} \|_{{\rm{BMO}}, {\rm{T}}^{\delta}} \leq \| Z \delta^{r + 1} \|_{\infty}  \, \| \delta^{\gamma -1} \|_{{\rm{BMO}}, {\rm{T}}^{\delta}}  \,,
    \end{equation*}
    which is finite by Lemma \ref{lemma_popier_dist}.
\end{proof}

\vspace{5pt}

\medskip
\begingroup
{ \footnotesize
\printbibliography}
\endgroup

\end{spacing}

\end{document}